\documentclass[11pt,oneside]{amsart}
\usepackage{mathtools,amsthm,amssymb,amsfonts,wasysym}
\usepackage{graphicx}
\usepackage{hyperref}
\usepackage{verbatim}
\usepackage[utf8]{inputenc}
\usepackage{csquotes}
\usepackage{geometry}
\usepackage{color}
\usepackage{tikz-cd}
\usetikzlibrary{cd}
\usepackage{caption}
\usepackage{subcaption}
\usepackage{setspace}
\usepackage{todonotes}
\usepackage[shortlabels]{enumitem}
\usepackage[capitalise]{cleveref}
\usepackage{booktabs}
\usepackage{todonotes}
\usepackage{charter,eulervm}
\usepackage[mathscr]{euscript}

\DeclareFontFamily{OT1}{pzc}{}
\DeclareFontShape{OT1}{pzc}{m}{it}{<-> s * [1.10] pzcmi7t}{}
\DeclareMathAlphabet{\mathpzc}{OT1}{pzc}{m}{it}

\theoremstyle{plain}
\newtheorem{theorem}{Theorem}[section]
\newtheorem*{theorem*}{The Contraction Principle}
\newtheorem{corollary}[theorem]{Corollary}
\newtheorem{proposition}[theorem]{Proposition}

\newtheorem{lemma}[theorem]{Lemma}

\newtheorem{fact}[theorem]{Fact}

\theoremstyle{definition}
\newtheorem*{definition*}{Definition}
\newtheorem{definition}[theorem]{Definition}

\newtheorem{example}[theorem]{Example}

\theoremstyle{remark}
\newtheorem{remark}[theorem]{Remark}
\newtheorem*{notation*}{Notation}

\newtheorem{summary}[theorem]{Summary}

\hyphenation{acu-punc-ture acu-punc-ture-ist an-eu-rysm an-eu-rys-mal 	com-put-able com-put-ab-ility copy-right-able de-allo-cate de-allo-ca-tion 	der-i-va-tion der-i-va-tion-al down-set em-bed-ded -em-bed-ded em-bed-ded-ness -em-bed-ded-ness es-sence figu-rine ho-meo-stat-ic ho-meo-sta-sis homo-mor-phism homo-mor-phisms -homo-mor-phism -homo-mor-phisms iso-geo-met-ric iso-ther-mal make-in-dex mne-mon-ic mon-oph-thong mono-space name-space om-ni-pres-ent om-ni-pres-ence phe-nol-phthalein phtha-lam-ic phthal-ate phthi-sis poly-an-dry poly-an-drous poly-dac-tyl poly-dac-tylic po-lyg-y-ny po-lyg-y-nous pol-yp pol-yps poly-phon-ic pres-ent-ly re-allo-cate re-allo-cates re-allo-cated re-arrange re-arranges re-arranged ser-geant ser-vo-me-chan-i-cal ser-vo-mech-a-nism sub-ob-ject text-height text-length text-width tribes-man}


\def\s{\sigma}

\def\I{\mathcal{I}}

\def\O{\mathcal{O}}

\def\s{\mathfrak{s}}

\newcommand{\N}{\mathbb{N}}

\newcommand{\xdownarrow}[1]{%
  {\left\downarrow\vbox to #1{}\right.\kern-\nulldelimiterspace}
}

\newcommand{\Frm}{\mathbf{Frm}}

\newcommand{\DLat}{\mathbf{DLat}}
\newcommand{\Pries}{\mathbf{Pries}}

\newcommand{\ignore}[1]{}

\newcommand{\BL}{\mathcal{BL}}
\newcommand{\M}{\mathcal{DM}}

\newcommand{\pH}{\mathpzc{p}\mathcal{H}}

\newcommand{\rb}{\prec}

\newcommand{\meet}{\wedge}
\newcommand{\join}{\vee}

\title[Dedekind-MacNeille and related completions]{Dedekind-MacNeille and related completions: \\
 subfitness, regularity, and Booleanness}
\author{G.~Bezhanishvili}
\address{New Mexico State University}
\email{guram@nmsu.edu}

\author{F.~Dashiell Jr}
\address{CECAT, Chapman University}
\email{dashiell@math.ucla.edu }

\author{M.A.~Moshier}
\address{CECAT, Chapman University}
\email{moshier@chapman.edu}

\author{J.~Walters-Wayland}
\address{CECAT, Chapman University}
\email{walterswayland@chapman.edu}

\date{}

\begin{document}

\subjclass[2020]{54D10; 18F70; 06D22; 06B23; 06B15; 06D50; 06E15}
\keywords{Separation axioms; distributive lattice; frame; subfit; regular; Boolean; Dedekind-MacNeille completion; Priestley duality}

\begin{abstract}
    Completions play an important r\^ole for studying structure by supplying elements that in some sense ``ought to be." Among these, the Dedekind-MacNeille completion is of particular importance. In 1968 Janowitz provided necessary and sufficient conditions for it to be subfit or Boolean. Another natural separation axiom connected to these is regularity. We explore similar characterizations of when closely related completions are subfit, regular, or Boolean. We are mainly interested in the Bruns-Lakser, ideal, and canonical completions, which (unlike the Dedekind-MacNeille completion) satisfy stronger forms of distributivity. The first two are widely used in pointfree topology, while the latter is of crucial importance in the semantics of modal logic. 
\end{abstract}

\maketitle

\tableofcontents

\section{Introduction}

As is well known, separation properties play an important r\^ole in classifying topological spaces. They have also been investigated in the pointfree setting; see the recent monograph \cite{picado_separation_2021}.   
Among these, subfitness is a particularly important lower separation axiom. It is closely related to $T_1$-separation (a topological space is $T_1$ iff it is $T_D$ and the lattice of open sets is subfit; see, e.g., \cite[p.~24]{picado_separation_2021}). Subfitness originated in the work of Wallman \cite{wallman_lattices_1938} in its dual form under the name of disjunctivity (using the lattice of closed sets of a topological space). It was further studied by B\"uchi \cite{buchi_boolesche_1948} and Pierce \cite{pierce_homorphisms_1954}. In the modern form (using the lattice of open sets) it was explored by Simmons \cite{simmons_lattice_1978} under the name of conjunctivity.   In set theory, disjunctivity is known as separativity and plays an important r\^ole in forcing; see  \cite[pp.199-200]{kunen_set_2011}. 

To distinguish these two notions, we refer to them as {\em $\vee$-subfitness} and {\em $\wedge$-subfitness}, but suppress $\vee$ from the former when no confusion arises. Subfitness is usually defined for frames, that is, complete lattices in which finite meets distribute over arbitrary joins, as these provide a natural generalization of lattices of open sets \cite{picado_frames_2012}. However, they also make perfect sense for arbitrary bounded lattices. We thus define a bounded lattice $A$ to be {\em $\vee$-subfit} if 
\[
\forall a,b~(a\not\le b \Longrightarrow \exists c : a\vee c = 1 \ne b \vee c);
\]
and {\em $\wedge$-subfit} if
\[
\forall a,b~(a\not\le b \Longrightarrow \exists c : a\wedge c \ne 0 = b \wedge c).
\]Our starting point is the following noteworthy observation of
Janowitz \cite{janowitz_section_1968}:

\begin{itemize}
\item A bounded lattice $A$ is $\vee$-subfit (resp.~$\wedge$-subfit) iff its Dedekind-MacNeille completion $\M A$ is $\vee$-subfit (resp.~$\wedge$-subfit).
\item $\M A$ is Boolean iff $A$ is distributive, $\vee$-subfit, and $\wedge$-subfit.
\end{itemize}

While $\M A$ has many desirable properties, it is not always distributive (see, e.g., \cite[Sec.~XII.2]{balbes_distributive_1974}). 
This can be remedied by considering the injective hull (of a $\wedge$-semilattice) constructed by Bruns and Lakser \cite{bruns_injective_1970} (see also Horn and Kimura \cite{horn_category_1971}), which is always a frame.
We call it the Bruns-Lakser completion and denote by 
$\BL A$. 
It is natural to consider the same questions for $\BL A$ as Janowitz considered for $\M A$. This is one of our main aims in this work.
Additionally, we consider two other well-studied completions, the ideal completion $\I A$ and the canonical completion $A\!^\sigma$, and consider the same questions for them. The ideal completion gives the free frame over a distributive lattice, making it an important construction in frame theory \cite{johnstone_stone_1982}, whereas the canonical completion\footnote{ It is more commonly known as the canonical extension, but in this context it is convenient to use a uniform term to describe all four constructions.} plays an important r\^ole in Kripke completeness of modal logics \cite{blackburn_modal_2001}. 

Another well-known separation axiom is regularity. For frames it is situated between subfit and Boolean. We recall that a frame is {\em regular} if
\[
\forall a,b~(a\not\le b \Longrightarrow \exists c : c\rb a \ \& \ c\not\le b),
\]
where $\rb$ is the {\em rather below} relation (see \cref{sec: regularity} for details). Again, this condition makes perfect sense for arbitrary bounded lattices, however distributivity is required to prove that regularity implies subfitness. Because of this, in this paper we concentrate on distributive lattices. The general case will be studied in a forthcoming paper. Thus, the main goal of this paper is to investigate when the four completions $\M A, \BL A, \I A$, and $A\!^\sigma$ of a bounded distributive lattice $A$ are subfit, regular, or Boolean. We will also study when these properties are preserved/reflected by these four completions. 
For this, as expected, counterexamples play an important r\^ole.  
These are often more efficiently described (not to mention discovered) by looking at the Priestley dual. Thus, Priestley duality for distributive lattices, which represents each distributive lattice as the lattice of clopen upsets of a Priestley space (see \cref{sec: prelims} for details), is one of main tools of this work. 

As we will see in \cref{sec: prelims}, we have the poset embeddings
    \[
    A  \hookrightarrow  \M A \hookrightarrow \BL A \hookrightarrow \I A \hookrightarrow A\!^\sigma.
    \]
Thus, $\M A$ is the ``closest" completion to $A$. As we pointed out above, Janowitz 
showed that $\M A$ is subfit iff $A$ is subfit. In addition, we describe 
when subfitness is preserved and/or reflected by a lattice extension, yielding Janowitz's result as a corollary.
On the other hand, since $\M A$ may not be distributive, the issue of regularity for $\M A$ is more complicated (see the previous paragraph).  
We thus assume that $\M A$ is distributive. In this case, we show that 
$\M A$ is regular iff $\BL A$ is regular and $A$ is subfit.  

For a distributive lattice $A$, Janowitz proved that $\M A$ is Boolean iff $A$ is both $\vee$-subit and $\wedge$-subfit. We prove that $\BL A$ is Boolean iff $A$ is $\wedge$-subfit, hence obtaining Janowitz's other result as a corollary. We also characterize when $\BL A$ is subfit and when $\BL A$ is regular. Each turns out to be strictly weaker than the corresponding property of $A$, and we provide necessary and sufficient conditions for when these two conditions are reflected. On the one hand, this relates to the notion of a proHeyting lattice; on the other hand, it results in a stronger notion of regularity for $\BL A$.

ProHeyting lattices were introduced in \cite{ball_dedekind-macneille_2016} to characterize when the Dedekind-MacNeille completion of a distributive lattice is a frame. This plays an important r\^ole in our considerations. For each distributive lattice $A$, we introduce its proHeyting extension $\pH A$ and show that $\BL A$ is exactly the Dedekind-MacNeille completion of $\pH A$. We think of $\pH A$ as a ``finitary" version of $\BL A$, and show that whether $\BL A$ is subfit, regular, or Boolean depends on whether the corresponding properties hold for $\pH A$.

As we will see, the three separation axioms of subfitness, regularity, and Booleanness all collapse for the canonical completion $A\!^\sigma$ and are equivalent to $A$ being Boolean. While for the ideal completion $\I A$, to be Boolean requires the much stronger condition that $A$ is a finite Boolean algebra. As with $A\!^\sigma$, for $\I A$ to be regular is equivalent to $A$ being Boolean, however the ideal completion is more sensitive with respect to subfitness. Indeed, $\I A$ is subfit iff every ideal is an intersection of maximal ideals, a condition that is strictly stronger than subfitness of $A$, but weaker than $A$ being Boolean. 

Utilizing Priestley duality, we characterize these three separation axioms for each of the completions by identifying an appropriate density property. For example, subfitness of $A$ is equivalent to the minimal spectrum of $A$ being dense in its Priestley space $X$, while subfitness of $\I A$ is equivalent to
to the density of the minimal spectrum of $A$ in the Skula topology of the topology of open upsets of $X$ (see \cref{Sec: subfit} for details).

It is natural 
to continue this line of research by considering other separation axioms as well as other completions. From a purely lattice-theoretic perspective, it is also of interest to explore the non-distributive setting,
which we aim to do in the future. 

\section{Preliminaries} \label{sec: prelims}

We start by recalling that a {\em completion} of a poset $P$ is a pair $(C,e)$ where $C$ is a complete lattice and $e:P\to C$ is a poset embedding. As is customary, we identify $P$ with its image $e[P]$ in $C$, and refer to $C$ as a completion of $P$. 

We say that $P$ is {\em join-dense} in $C$ if each element of $C$ is a join from $P$, and define $P$ being {\em meet-dense} in $C$ similarly. Up to isomorphism, each poset $P$ has a unique completion $C$ such that $P$ is both join- and meet-dense in $C$ (see, e.g., \cite[p.~237]{balbes_distributive_1974}). It is known as the {\em Dedekind-MacNeille completion} of $P$ and we denote it by $\M P$. While $\M P$ has many desirable properties (for example, $\M P$ is the injective hull of $P$ in the category $\bf Pos$ of posets and order-preserving maps), it may fail to be distributive. This can be remedied by considering injective hulls in the category $\bf MSLat$ of $\wedge$-semilattices. These were constructed by  Bruns and Lakser \cite{bruns_injective_1970} (see also Horn and Kimura \cite{horn_category_1971}), who showed that the injective hull of each $\wedge$-semilattice $S$ is a frame\footnote{We refer the reader to \cite{johnstone_stone_1982} or \cite{picado_frames_2012} for all basics about frame/locale theory.} in which $S$ is join-dense.  We denote this completion by $\BL S$. 

In this paper we are mainly interested in bounded distributive lattices, so we let $\DLat$ be the category of bounded distributive lattices and bounded lattice homomorphisms. For $A\in\DLat$, in addition to the Dedekind-MacNeille completion $\M A$ and the Bruns-Lakser completion $\BL A$, we consider the {\em ideal completion} $\I A$ (see, e.g., \cite[p.~59]{johnstone_stone_1982}) and the {\em canonical completion} $A\!^\sigma$ (see, e.g., \cite{goldblatt_varieties_1989,gehrke_bounded_1994}).
As follows from the name, $\I A$ is the frame of all ideals of $A$, and the assignment $A\mapsto\I A$ defines a functor from $\DLat$ to the category $\Frm$ of frames (yielding an equivalence between $\DLat$ and the (non-full) subcategory $\bf CohFrm$ of $\Frm$ consisting of coherent frames; see, e.g., \cite[p.~65]{johnstone_stone_1982}). On the other hand, $A\!^\sigma$ is, up to isomorphism, a unique completion of $A$ that is both compact and dense 
\cite[p.~349]{gehrke_bounded_2001}.
Moreover, the assignment $A\mapsto A\!^\sigma$ defines a functor from $\DLat$ to the category $\bf CDLat$ of completely distributive lattices (see, e.g., \cite[Thm.~5.4]{gehrke_bounded_2001} or \cite[Cor.~2.2]{bezhanishvili_canonical_2021}).

We next characterize each of these four completions using Priestley duality for $\DLat$ \cite{priestley_representation_1970,priestley_ordered_1972}. 

\begin{definition}
    A {\em Priestley space} is a compact space $X$ equipped with a partial order $\le$ satisfying the {\em Priestley separation axiom}:
    \[
    \mbox{ if } x\not\le y, \mbox{ then there is a clopen upset } U \mbox{ such that } x\in U \mbox{ and } y \notin U.
    \]
\end{definition}
\begin{remark}
   The Priestley separation axiom implies that the topology has a basis of clopen sets. Thus, the topology of a Priestley space is a Stone topology (compact Hausdorff and zero-dimensional).

\end{remark}

A {\em Priestley morphism} between Priestley spaces is a continuous order-preserving map. Let $\Pries$ be the category of Priestley spaces and Priestley morphisms. 

\begin{theorem} [Priestley duality]
    $\DLat$ is dually equivalent to $\Pries$.
\end{theorem}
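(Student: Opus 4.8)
The plan is to exhibit the dual equivalence by constructing a contravariant functor each way and checking that the canonical unit and counit are natural isomorphisms. In one direction, send $A\in\DLat$ to the set $X_A$ of prime filters of $A$, ordered by inclusion and given the topology with subbasis $\{\phi_A(a) : a\in A\}\cup\{X_A\setminus\phi_A(a) : a\in A\}$, where $\phi_A(a)=\{F\in X_A : a\in F\}$; a bounded lattice homomorphism $h\colon A\to B$ goes to $h^{-1}\colon X_B\to X_A$. In the other direction, send a Priestley space $X$ to the bounded distributive lattice $\mathrm{ClopUp}(X)$ of clopen upsets of $X$ under the set-theoretic operations, and a Priestley morphism $f\colon X\to Y$ to $f^{-1}\colon\mathrm{ClopUp}(Y)\to\mathrm{ClopUp}(X)$.

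First I would check these functors are well defined on objects. Compactness of $X_A$ follows from Alexander's subbase lemma together with the prime filter theorem (a finitely satisfiable family of subbasic conditions extends to a prime filter); the Priestley separation axiom is immediate from the choice of subbasis, and each $\phi_A(a)$ is visibly an upset, so $\phi_A(a)\in\mathrm{ClopUp}(X_A)$. Conversely $\mathrm{ClopUp}(X)$ is closed under finite intersections and unions, contains $\emptyset$ and $X$, and inherits distributivity from the powerset. On morphisms, $h^{-1}$ sends prime filters to prime filters and is continuous and order-preserving, while $f^{-1}$ preserves clopen upsets together with the operations and bounds; functoriality is routine.

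Next I would produce the natural isomorphisms. For the unit, $\eta_A\colon A\to\mathrm{ClopUp}(X_A)$ with $\eta_A(a)=\phi_A(a)$ is a bounded lattice homomorphism; it is injective since $a\not\le b$ yields, by the prime filter theorem, a prime filter containing $a$ but not $b$; and it is surjective because, by compactness, an arbitrary clopen upset of $X_A$ is a finite union of basic sets $\phi_A(a_i)\cap(X_A\setminus\phi_A(b_i))$, which, being an upset, must coincide with $\phi_A$ of an appropriate join (here distributivity is used). For the counit, $\varepsilon_X\colon X\to X_{\mathrm{ClopUp}(X)}$ with $\varepsilon_X(x)=\{U\in\mathrm{ClopUp}(X) : x\in U\}$ is well defined (this set is a prime filter of $\mathrm{ClopUp}(X)$), continuous, and order-preserving; it is injective by the Priestley separation axiom; and it is surjective because, given a prime filter $P$ of clopen upsets, the family $P\cup\{X\setminus V : V\notin P\}$ has the finite intersection property, so by compactness its intersection contains a point, which maps to $P$. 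Being a continuous bijection between compact Hausdorff spaces, $\varepsilon_X$ is a homeomorphism, hence an isomorphism of Priestley spaces; naturality of $\eta$ and $\varepsilon$ is a diagram chase.

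I expect the principal obstacle to be the two surjectivity arguments, both powered by compactness: that every clopen upset of $X_A$ has the form $\phi_A(a)$, and that every prime filter of $\mathrm{ClopUp}(X)$ is the clopen-upset neighborhood filter of a point. Injectivity of $\eta_A$ (and nonemptiness of $X_A$ itself) also rests on the prime filter theorem, a choice principle that cannot be dispensed with.
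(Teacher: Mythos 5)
Your construction is exactly the one the paper records in \cref{rem: Priestley functors} (prime filters with the Stone-map subbasis on one side, clopen upsets on the other), and since the paper itself offers no proof --- it defers to Priestley's original articles --- your sketch is the standard argument and is essentially correct, matching the intended approach. The one soft spot is the surjectivity of $\eta_A$: concluding that a clopen upset which is a finite union of sets $\phi_A(a_i)\setminus\phi_A(b_i)$ must equal some $\phi_A(c)$ does not follow from distributivity alone, and the usual route is a two-stage compactness argument --- for each $x\in U$ cover the compact complement $X_A\setminus U$ by sets $X_A\setminus\phi_A(a_{xy})$ with $a_{xy}\in x\setminus y$ to obtain $b_x$ with $x\in\phi_A(b_x)\subseteq U$, then cover the compact set $U$ by finitely many $\phi_A(b_x)$ and take the join.
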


\begin{remark} \label{rem: Priestley functors}
    The contravariant functors establishing Priestley duality are constructed as follows. For a Priestley space $X$, let $\sf L(X)$ be the lattice of clopen upsets of $X$, and for a Priestley morphism $f:X\to Y$ let $\sf L(f)=f^{-1}:\sf L(Y)\to\sf L(X)$. 
For a bounded distributive lattice $A$, let $\mathcal S(A)$ be the poset of prime filters of $A$ and let $\mathfrak s_A:A\to\wp(\mathcal S(A))$ be the Stone map given by $\mathfrak s_A(a)=\{x\in \mathcal S(A) : a\in x\}$. We order $\mathcal S(A)$ by inclusion and topologize it by the subbasis
    \[
    \{ \mathfrak s_A(a):a\in A\}\cup\{ \mathfrak s_A(b)^c : b \in A\}.
    \] 
    Then $\mathcal S(A)$ is a Priestley space. Moreover, if $h:A\to B$ is a bounded lattice morphism, then $\mathcal S(h)=h^{-1}:\mathcal S(B)\to\mathcal S(A)$ is a Priestley morpism. These two contravariant functors $\sf L,\mathcal S$ establish Priestley duality.
In particular, $\mathfrak s_A : A\to\sf L\mathcal{S}(A)$ is a lattice isomorphism natural in each coordinate and $\varepsilon_X :X \to \mathcal{S}\sf L(X)$, given by $\varepsilon_X(x)=\{ U\in{\sf L(X)} : x\in U \}$, is an order-homeomorphism natural in each coordinate.
\end{remark}

\begin{remark}
    Priestley duality depends on the Boolean Prime Ideal Theorem (or one of its equivalents), which is a consequence of the Axiom of Choice. Thus, all of our results that depend on Priestley duality are not choice free.
\end{remark}

We will freely use the following well-known facts about Priestley spaces (see, e.g., \cite[pp.~258--259]{davey_introduction_2002}):

\begin{fact} \label{fact: Priestley}
Let $X$ be a Priestley space.
\begin{enumerate}[label=\normalfont(\arabic*), ref = \thefact(\arabic*)]
    \item $\{ U\setminus V : U,V$ clopen upsets$\}$ forms a basis for the topology on $X$. \label[fact]{fact: Priestley 1}
    \item Open upsets (resp.~downsets) are unions of clopen upsets (resp.~downsets). \label[fact]{fact: Priestley 2}
    \item Closed upsets (resp.~downsets) are intersections of clopen upsets (resp.~downsets). \label[fact]{fact: Priestley 3} 
    \item  $\le$ is a closed subset of the product $X^2$. Consequently, ${\uparrow}F$ and ${\downarrow}F$ are closed for each closed subset $F$ of $X$. \label[fact]{fact: Priestley 4}
\end{enumerate}
\end{fact}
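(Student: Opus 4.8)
The plan is to derive all four parts from the Priestley separation axiom together with compactness (and Hausdorffness) of $X$: parts (1) and (4) come essentially directly, while (2) follows by a standard compactness extraction and (3) is its complementary form.

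For part (1), I would use that, by construction (see \cref{rem: Priestley functors}), the topology of $X$ is generated by the subbasis consisting of the clopen upsets together with their complements, the clopen downsets. Since the clopen upsets are closed under finite intersections and finite unions, any finite intersection of subbasic opens rewrites as $U\cap(X\setminus V_1)\cap\cdots\cap(X\setminus V_n)=U\setminus(V_1\cup\cdots\cup V_n)$ with all of $U,V_1,\dots,V_n$ clopen upsets and $V_1\cup\cdots\cup V_n$ again a clopen upset; hence the sets $U\setminus V$ with $U,V$ clopen upsets form a basis. For part (4), given $x\not\le y$ I would pick a clopen upset $U$ containing $x$ but not $y$ and note that $U\times(X\setminus U)$ is an open neighbourhood of $(x,y)$ in $X^2$ disjoint from the graph of $\le$ (if $a\le b$ and $a\in U$ then $b\in U$, as $U$ is an upset), so $\le$ is a closed subset of $X^2$. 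The consequence then follows because, for closed $F$, the set $({\le})\cap(F\times X)$ is closed, hence compact, and its image under the continuous second projection is precisely ${\uparrow}F$, which is therefore compact and, $X$ being Hausdorff, closed; ${\downarrow}F$ is handled symmetrically using the first projection.

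For part (2), let $W$ be an open upset and fix $x\in W$. The complement $X\setminus W$ is closed, hence compact, and for each $y\in X\setminus W$ we have $x\not\le y$ (otherwise $y$ would lie in the upset $W$), so the separation axiom supplies a clopen upset $U_{x,y}$ with $x\in U_{x,y}$ and $y\notin U_{x,y}$. The open sets $X\setminus U_{x,y}$ cover the compact set $X\setminus W$, so finitely many of them do, and intersecting the corresponding $U_{x,y}$ yields a clopen upset $U_x$ with $x\in U_x\subseteq W$; hence $W=\bigcup_{x\in W}U_x$. The version for open downsets follows by invoking the order-dual form of the Priestley separation axiom (separating by clopen downsets). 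Part (3) is then immediate: a closed upset $F$ has open-downset complement, which by part (2) is a union of clopen downsets, so $F$ is the intersection of the complementary clopen upsets (equivalently, run the part (2) argument directly, with $x$ ranging outside $F$ and $y$ ranging over the compact set $F$).

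I do not expect a genuine obstacle here; the only points to watch are the direction of each compactness argument — one covers the \emph{complement} of the open set, or the closed set itself, rather than the set one is building — and the need to invoke the separation axiom in its order-dual form for the downset clauses.
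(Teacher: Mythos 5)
Your proof is correct. Note first that the paper offers no proof of this statement at all: it is announced as a collection of ``well-known facts'' with a citation to Davey--Priestley, so there is nothing internal to compare against; your arguments are the standard textbook ones, and each of the four compactness/projection arguments is carried out correctly (including the easy-to-miss points you flag yourself: covering the complement of the open set in part (2), and using the order-dual separation for the downset clauses).

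The one place where your write-up is slightly loose is the opening of part (1). For an \emph{abstract} Priestley space the topology is given a priori, not ``constructed'' from the subbasis of clopen upsets and their complements; the description in \cref{rem: Priestley functors} applies to $\mathcal S(A)$, and transferring it to $X$ requires either invoking that $\varepsilon_X\colon X\to\mathcal S\mathsf L(X)$ is a homeomorphism (which the paper's statement of duality does supply), or running the same two-sided compactness argument you use in part (2): for $x$ in an open set $W$ and $y\in X\setminus W$, antisymmetry gives $x\not\le y$ or $y\not\le x$, so the separation axiom yields a clopen upset or clopen downset containing $x$ but not $y$; compactness of $X\setminus W$ then produces a neighbourhood of $x$ inside $W$ of the form $U\cap D$ with $U$ a clopen upset and $D$ a clopen downset, i.e.\ of the form $U\setminus V$. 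Your subsequent rewriting of finite intersections of subbasic opens is fine; just replace ``by construction'' with one of these two justifications and the argument is complete.
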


Each Priestley space $X$ carries a Stone topology, as well as the topology of open upsets and the topology of open downsets, the former being the join of the latter two. We write $\sf cl$ and $\sf int$ for the closure and interior in the Stone topology, $\sf cl_1$ and $\sf int_1$ for the closure and interior in the topology of open upsets, and $\sf cl_2$ and $\sf int_2$ for the closure and interior in the topology of open downsets. Then
\begin{eqnarray*}
    {\sf cl_1}(S) = {\downarrow}{\sf cl}(S); & \quad & {\sf cl_2}(S)={\uparrow}{\sf cl} (S); \\
    {\sf int_1}(S) = X\setminus{\downarrow}(X\setminus{\sf int}(S)); & \quad & {\sf int_2}(S)=X\setminus{\uparrow}(X\setminus{\sf int}(S)).
\end{eqnarray*}
Therefore,
\begin{eqnarray*}
    x\in{\sf cl_1}(S) \mbox{ iff } {\uparrow}x\cap{\sf cl}(S)\ne\varnothing; & \quad & x\in{\sf cl_2}(S) \mbox{ iff } {\downarrow}x\cap{\sf cl}(S)\ne\varnothing; \\
    x\in{\sf int_1}(S) \mbox{ iff } {\uparrow}x\subseteq{\sf int}(S); & \quad & x\in{\sf int_2}(S) \mbox{ iff } {\downarrow}x\subseteq{\sf int}(S).
\end{eqnarray*}

We are ready to characterize $A\!^\sigma$ and $\I A$ in the language of Priestley spaces. For a Priestley space $X$, let ${\sf Up}(X)$ be the completely distributive lattice 
of upsets and ${\sf OpUp}(X)$ the frame of open upsets of $X$. We then have:

\begin{theorem} \label{lem: char of sigma and I}
    Let $A\in{\DLat}$ and $X$ be the Priestley space of $A$.
    \begin{enumerate}[label=\normalfont(\arabic*), ref = \thetheorem(\arabic*)]
    \item \cite{goldblatt_varieties_1989,gehrke_bounded_1994} $A\!^\sigma \cong  {\sf Up}(X)$. \label[theorem]{lem: char of M and Dinfty-0}
        \item \cite{priestley_representation_1970} $\I A \cong {\sf OpUp}(X)$. \label[theorem]{lem: char of M and Dinfty-1}
    \end{enumerate}
\end{theorem}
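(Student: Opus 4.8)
The plan is to route both isomorphisms through the Stone embedding $\mathfrak{s}_A\colon A\to{\sf L}(X)\subseteq{\sf Up}(X)$ (recall that ${\sf Up}(X)$ is a complete lattice with unions as joins and intersections as meets, and that $\mathfrak{s}_A$ is a lattice isomorphism onto ${\sf L}(X)$), and then appeal to the defining properties of $\I A$ and $A\!^\sigma$. Both statements can also simply be quoted from \cite{priestley_representation_1970} and from \cite{goldblatt_varieties_1989,gehrke_bounded_1994}, but a short self-contained argument is available; I treat the (shorter) part (2) first.

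For (2), define $\Phi\colon\I A\to{\sf OpUp}(X)$ by $\Phi(I)=\bigcup_{a\in I}\mathfrak{s}_A(a)$. A union of clopen upsets is an open upset, so $\Phi$ is well defined and order-preserving; it is surjective by \cref{fact: Priestley 2}; and $U\mapsto\{a\in A:\mathfrak{s}_A(a)\subseteq U\}$ is an inverse to it, the displayed set being an ideal because a finite union of clopen upsets contained in $U$ is again a clopen upset contained in $U$. Injectivity of $\Phi$ is immediate since $\mathfrak{s}_A$ is an order-embedding. Preservation of arbitrary joins is routine, and preservation of finite meets comes down to $\mathfrak{s}_A(a\wedge b)=\mathfrak{s}_A(a)\cap\mathfrak{s}_A(b)$ together with the observation that if $x\in\mathfrak{s}_A(a)\cap\mathfrak{s}_A(b)$ with $a\in I_1$ and $b\in I_2$, then $x\in\mathfrak{s}_A(a\wedge b)$ with $a\wedge b\in I_1\cap I_2$. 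Thus $\Phi$ is a frame isomorphism.

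For (1), I will use that $A\!^\sigma$ is, up to isomorphism, the unique completion of $A$ that is both compact and dense \cite[p.~349]{gehrke_bounded_2001}, and check these two properties for ${\sf L}(X)\subseteq{\sf Up}(X)$. Density has two halves. Every upset $W$ is a join of meets of clopen upsets, since $W=\bigcup_{x\in W}{\uparrow}x$ and each ${\uparrow}x$ is a closed upset by \cref{fact: Priestley 4}, hence an intersection of clopen upsets by \cref{fact: Priestley 3}. Every upset $W$ is also a meet of joins of clopen upsets, that is, an intersection of open upsets: for $y\notin W$ and any $w\in W$ we have $w\not\le y$ (else $y\in W$ since $W$ is an upset), so the Priestley separation axiom gives a clopen upset $U_w$ with $w\in U_w$ and $y\notin U_w$, and then $\bigcup_{w\in W}U_w$ is an open upset containing $W$ but not $y$; moreover every open upset is a union of clopen upsets by \cref{fact: Priestley 2}. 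For compactness, suppose $\bigcap_i U_i\subseteq\bigcup_j V_j$ with all $U_i,V_j$ clopen upsets. The closed set $\bigcap_i U_i$ is compact, so it is covered by finitely many $V_{j_1},\dots,V_{j_n}$; putting $V=V_{j_1}\cup\dots\cup V_{j_n}$, the closed set $X\setminus V$ is disjoint from $\bigcap_i U_i$, hence covered by finitely many of the open sets $X\setminus U_i$, and complementing that finite subcover gives indices $i_1,\dots,i_m$ with $U_{i_1}\cap\dots\cap U_{i_m}\subseteq V_{j_1}\cup\dots\cup V_{j_n}$. Uniqueness of the canonical completion then gives ${\sf Up}(X)\cong A\!^\sigma$.

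The only step with genuine content is the ``meet of joins'' half of density in (1): it truly relies on the Priestley separation axiom, since an upset of $X$ need not be an intersection of clopen upsets, only of \emph{open} upsets. The remaining ingredients are bookkeeping with clopen upsets and routine compactness arguments in the Stone topology.
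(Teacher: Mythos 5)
The paper offers no proof of this theorem at all: both parts are simply quoted from the literature (Priestley for (2), Goldblatt and Gehrke--J\'onsson for (1)), so any self-contained argument is by construction a different route. Yours is essentially the standard one, and it fits the paper well: for (1), going through the characterization of $A\!^\sigma$ as the unique compact and dense completion is exactly the handle the paper itself sets up in \cref{sec: prelims}, and your verifications of density (joins of meets via ${\uparrow}x$ being a closed upset, meets of joins via the Priestley separation axiom) and of compactness (two applications of topological compactness) are correct. For (2), the map $\Phi$ and the candidate inverse are the right ones, and the checks of surjectivity, arbitrary joins, and finite meets are fine.

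One step in (2) is under-justified. You claim injectivity of $\Phi$ is ``immediate since $\mathfrak{s}_A$ is an order-embedding,'' and, equivalently, that $U\mapsto\{a\in A:\mathfrak{s}_A(a)\subseteq U\}$ is a two-sided inverse; but the nontrivial half of that claim is that $\mathfrak{s}_A(a)\subseteq\Phi(I)$ forces $a\in I$, and this does not follow from $\mathfrak{s}_A$ being an order-embedding, since $\Phi(I)$ is generally not of the form $\mathfrak{s}_A(b)$. What it does follow from is compactness of the clopen set $\mathfrak{s}_A(a)$: the cover $\{\mathfrak{s}_A(b):b\in I\}$ of $\mathfrak{s}_A(a)$ has a finite subcover, so $\mathfrak{s}_A(a)\subseteq\mathfrak{s}_A(b_1\vee\dots\vee b_n)$ with $b_1\vee\dots\vee b_n\in I$, whence $a\le b_1\vee\dots\vee b_n$ and $a\in I$ because $I$ is a downset. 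This is a one-line repair using the same compactness pattern you deploy twice in part (1), but as written the step is a gap, and it is the step where the coherence of $\I A$ actually lives.
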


\begin{samepage}
    \begin{remark} \label{rem: ideals = open upsets}
    \hfill
    \begin{enumerate}[label=\normalfont(\arabic*), ref = \theremark(\arabic*)]
    \item Alternatively, $A\!^\sigma$ 
   can be described as follows. Let $\mathcal F(A)$ be the frame of filters of $A$. Then there is a Galois connection $(\phi,\psi)$ between the powersets of $\mathcal F(A)$ and $\I(A)$ given by 
    \[
    \phi(Y)=\{ I\in\I(A) : I\cap F\ne\varnothing~\forall F\in Y\}
    \]
    and 
    \[
    \psi(Z)=\{ F\in\mathcal F(A) : F\cap I\ne\varnothing~\forall I\in Z\},
    \]
    and $A\!^\sigma$ is isomorphic to the complete lattice $\mathcal G(\phi,\psi)$ of Galois closed elements (see \cite[Prop.~2.6]{gehrke_bounded_2001}). 

   \item  The isomorphism of \cref{lem: char of M and Dinfty-1} sends $I \in \I A$ to the open upset 
   \[
   \s(I)=\bigcup\{\s(a) : a \in I\}.
   \]
   Therefore, $\s(\bigvee_\alpha I_\alpha) = \bigcup_\alpha \s(I_\alpha)$ and $\s(I \cap J)= \s(I) \cap \s (J)$. 
   This will be used in subsequent sections. \label[remark]{rem: ideals = open upsets 2}
    \end{enumerate}
\end{remark}
\end{samepage}

To characterize $\M A$ and $\BL A$ in terms of the Priestley dual $X$ of $A$, we first recall that $\M A$ can be constructed as the complete lattice of normal ideals of $A$  and $\BL A$ as the frame of D-ideals of $A$ (see the beginning of \cref{sec: pH} for details). Since ideals of $A$ dually correspond to open upsets of $X$, it is sufficient to recognize which open upsets correspond to normal ideals and which to D-ideals.

Consider the following two closure operators on ${\sf OpUp}(X)$:  ${\sf int_1}{\sf cl_2}$ and ${\sf int_1}{\sf cl}$. Observe that the second one is even a nucleus on ${\sf OpUp}(X)$ (see \cite[Prop.~6.1]{bezhanishvili_proximity_2014}). 

\begin{definition}\label{defn MX and BLX}
Let $X$ be a Priestley space.
\begin{enumerate}[label=\normalfont(\arabic*), ref = \thedefinition(\arabic*)]
    \item We call a fixpoint of ${\sf int_1}{\sf cl_2}$ a {\em Dedekind-MacNeille upset} or {\em DM-upset}, and let ${\sf DM}(X)$ be the poset of DM-upsets of $X$.\label[definition]{defn MX and BLX 1}
    \item We call a fixpoint of ${\sf int_1}{\sf cl}$ a {\em Bruns-Lakser upset} or {\em BL-upset},  and let ${\sf BL}(X)$ be the poset of BL-upsets of $X$.\label[definition]{defn MX and BLX 2}
\end{enumerate}
\end{definition}
 
It turns out that normal ideals exactly correspond to DM-upsets, while D-ideals to BL-upsets, and hence we obtain:

\begin{theorem} \label{lem: char of M and Dinfty}
    Let $A\in{\DLat}$ and $X$ be the Priestley space of $A$.
    \begin{enumerate}[label=\normalfont(\arabic*), ref = \thetheorem(\arabic*)]
    
        \item \cite[Thm.~3.5]{bezhanishvili_macneille_2004} $\M A \cong {\sf DM}(X)$.
        \label[theorem]{lem: char of M and Dinfty-2}
        \item \cite[Prop.~6.1, 8.12]{bezhanishvili_proximity_2014} $\BL A \cong {\sf BL}(X)$.
        \label[theorem]{lem: char of M and Dinfty-3}
    \end{enumerate}
\end{theorem}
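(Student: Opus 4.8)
The plan is to realise both completions as fixpoints of closure operators on the ideal completion $\I A$ and then transport everything across the dual isomorphism of \cref{lem: char of M and Dinfty-1}. As recalled at the start of \cref{sec: pH}, $\M A$ is the complete lattice of normal ideals of $A$, i.e.\ the fixpoints of the closure operator $n\colon\I A\to\I A$ given by $n(I)=(I^u)^l$ (pass to upper bounds, then to lower bounds), and $\BL A$ is the frame of D-ideals of $A$, i.e.\ the fixpoints of a nucleus $d\colon\I A\to\I A$, where $d(I)$ is the least D-ideal containing $I$. By \cref{lem: char of M and Dinfty-1} and \cref{rem: ideals = open upsets 2}, the map $\s\colon I\mapsto\bigcup\{\s(a):a\in I\}$ is a frame isomorphism from $\I A$ onto ${\sf OpUp}(X)$, and so conjugates $n$ and $d$ to closure operators on ${\sf OpUp}(X)$. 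The theorem will follow once these conjugates are identified with ${\sf int_1}{\sf cl_2}$ and ${\sf int_1}{\sf cl}$, whose fixpoints are by definition (\cref{defn MX and BLX}) the DM-upsets and the BL-upsets: a frame isomorphism conjugating one closure operator into another restricts to an isomorphism between the posets of fixpoints, giving $\M A\cong{\sf DM}(X)$ and $\BL A\cong{\sf BL}(X)$; and since an order isomorphism between complete lattices (resp.\ between frames) automatically preserves all joins and meets, these are isomorphisms of the relevant structures.

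For part (1) I would carry out the identification directly, in three steps. First, for $b\in A$ we have $b\in I^u$ exactly when $a\le b$ for all $a\in I$, which, since the Stone map is an order embedding (\cref{rem: Priestley functors}), amounts to $\s(I)\subseteq\s(b)$. Second, for an open upset $S$ the intersection of all clopen upsets containing $S$ is ${\sf cl_2}(S)$: indeed ${\sf cl_2}(S)={\uparrow}{\sf cl}(S)$ is a closed upset (\cref{fact: Priestley 4}), hence an intersection of clopen upsets (\cref{fact: Priestley 3}), each containing $S$; conversely every clopen upset containing $S$ is a closed upset containing $S$, so contains ${\sf cl_2}(S)$. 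Third, for any $K\subseteq X$, applying $\s$ to the ideal $\{c\in A:\s(c)\subseteq K\}$ yields $\bigcup\{\s(c):\s(c)\subseteq K\}$, the union of all clopen upsets contained in $K$, which equals ${\sf int_1}(K)$ by \cref{fact: Priestley 2}. Taking $K={\sf cl_2}(\s(I))$ and combining the three steps gives $\s(n(I))=\s((I^u)^l)={\sf int_1}{\sf cl_2}(\s(I))$, i.e.\ $\s\circ n={\sf int_1}{\sf cl_2}\circ\s$; passing to fixpoints proves $\M A\cong{\sf DM}(X)$.

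Part (2) follows the same template, but here the explicit description $n(I)=(I^u)^l$ from step (1) has no counterpart, and the task is to prove $\s\circ d={\sf int_1}{\sf cl}\circ\s$, equivalently that an open upset $\s(I)$ is a fixpoint of ${\sf int_1}{\sf cl}$ exactly when $I$ is a D-ideal. The key extra input is the fact recorded just before \cref{defn MX and BLX} (see \cite[Prop.~6.1]{bezhanishvili_proximity_2014}) that ${\sf int_1}{\sf cl}$ is a \emph{nucleus} on ${\sf OpUp}(X)$; its fixpoints then form a frame $F$, and $a\mapsto\s(a)$ is a $\wedge$-preserving join-dense embedding of $A$ into $F$ — each $\s(a)$ is a clopen upset, hence a fixpoint, while any fixpoint $U$ is the join in $F$ of the clopen upsets it contains (their union is ${\sf int_1}(U)=U$, so this join is ${\sf int_1}{\sf cl}(U)=U$). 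It then remains to recognise $F$, together with this embedding, as the injective hull of $A$ in $\bf MSLat$, i.e.\ $\BL A$; concretely, one matches the joins computed in $F$ that happen to exist already in $A$ with the ``distributive'' (admissible) joins of Bruns and Lakser. I expect this last step — translating distributive joins in $A$ into Stone closures of open upsets of $X$, and so recognising the nucleus ${\sf int_1}{\sf cl}$ — to be the main obstacle; it is in essence the content of \cite[Prop.~6.1,~8.12]{bezhanishvili_proximity_2014}, and it genuinely uses that $\BL A$ is a frame rather than merely a complete lattice, which is also why ${\sf cl}$ rather than ${\sf cl_2}$ appears here. The remaining points — that $n$, $d$, and their conjugates really are closure operators, and that the fixpoint posets inherit the structures claimed — are routine.
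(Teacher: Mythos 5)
The paper does not actually prove this theorem: both items are quoted from the literature, and the only argument offered is the remark that, under the ideal/open-upset correspondence of \cref{lem: char of M and Dinfty-1}, normal ideals go to DM-upsets and D-ideals to BL-upsets. Your proposal follows exactly that skeleton, so there is no divergence of route to report; the only question is how much of the outsourced content you actually supply. For item (1) you supply all of it, correctly: the three steps --- $I^u$ is the set of $b$ with $\s(I)\subseteq\s(b)$, the intersection of those clopen upsets is ${\sf cl_2}(\s(I))$ by \cref{fact: Priestley 3,fact: Priestley 4}, and applying $\s$ to the ideal of lower bounds gives ${\sf int_1}$ of that set by \cref{fact: Priestley 2} --- yield $\s\circ n={\sf int_1}{\sf cl_2}\circ\s$, and (as you implicitly use) every fixpoint of ${\sf int_1}{\sf cl_2}$ is automatically an open upset, so the fixpoint correspondence is onto ${\sf DM}(X)$. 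This is a complete proof of (1).

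For item (2) you explicitly defer the one nontrivial step --- that the fixpoints of the nucleus ${\sf int_1}{\sf cl}$ are precisely the images of the D-ideals --- to \cite[Prop.~6.1, 8.12]{bezhanishvili_proximity_2014}. Since that is exactly what the paper does, this is not a defect relative to the paper, but as a self-contained argument it stops at the only hard point. If you wanted to close it without the citation, a route more in the spirit of your part (1) is available from the paper's own later material: by \cref{lem: annih char 1} one has $\s(\langle a,b\rangle)=X\setminus{\downarrow}(\s(a)\setminus\s(b))$, a short computation shows each such set is a fixpoint of ${\sf int_1}{\sf cl}$ (if $x\le k\in\s(a)\setminus\s(b)$ then the clopen set $\s(a)\setminus\s(b)$ witnesses $k\notin{\sf cl}(X\setminus{\downarrow}(\s(a)\setminus\s(b)))$, so $x\notin{\sf int_1}{\sf cl}$ of it), and conversely every fixpoint $U$ is the intersection of the sets $X\setminus{\downarrow}(\s(a)\setminus\s(b))$ with $\s(a)\setminus\s(b)$ disjoint from $U$ (for $x\notin U={\sf int_1}{\sf cl}(U)$ pick $y\ge x$ with $y\notin{\sf cl}(U)$ and a basic clopen around $y$ missing $U$). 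Combined with \cref{lem: ann is D}, which identifies D-ideals with intersections of relative annihilators, this matches D-ideals with fixpoints and avoids any appeal to the universal property of the injective hull --- though it still leans on \cref{lem: ann is D 2}, which the paper in turn cites.
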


\begin{remark} \label{fact: Priestley 5}
    We have the following poset inclusions
    \[
    {\sf L}(X)\subseteq {\sf DM}(X) \subseteq {\sf BL}(X) \subseteq {\sf OpUp}(X) \subseteq {\sf Up}(X),
    \]
    most of which are obvious. We only verify that ${\sf DM}(X) \subseteq {\sf BL}(X)$: if $U\in{\sf DM}(X)$ then $U = {\sf int_1cl_2}(U)$, so
    \[
    U\subseteq {\sf int_1cl}(U) \subseteq {\sf int_1} {\uparrow}\,{\sf cl}(U) = {\sf int_1cl_2}(U) = U,
    \]
    and hence $U \in {\sf BL}(X)$. 
    Moreover, ${\sf L}(X)$ is a bounded sublattice of each of the four, ${\sf DM}(X)$ is a bounded sub-$\cap$-semilattice of ${\sf BL}(X)$, ${\sf BL}(X)$ is a sublocale\footnote{We recall that a {\em sublocale} of a frame $L$ is a subset $S\subseteq L$ closed under arbitrary meets and containing relative pseudocomplements $a\to s = \bigvee \{ x \in L : a \wedge x \le s \}$ for all $s\in S$ and $a\in L$. Sublocales are in one-to-one correspondence with nuclei and play an important r\^ole in pointfree topology. We refer to \cite{picado_frames_2012} for details.} of ${\sf OpUp}(X)$, and ${\sf OpUp}(X)$ is a subframe of ${\sf Up}(X)$. That ${\sf BL}(X)$ is a sublocale of ${\sf OpUp}(X)$ can be seen by observing that ${\sf BL}(X)$ is the fixpoints of the nucleus ${\sf int_1cl}$ on ${\sf OpUp}(X)$.   
    For the same reason, 
    joins in ${\sf BL}(X)$ are calculated by the formula
    $
    \bigvee_{{\sf BL}(X)} S = {\sf int_1cl}\left(\bigcup S\right)
    $
    for each $S\subseteq{\sf OpUp}(X)$. This will be used frequently in subsequent sections. 
\end{remark}

\begin{remark} \label{part 2 of above remark}
    As a consequence of \cref{fact: Priestley 5}, we obtain the following poset embeddings
    \[
    A  \hookrightarrow  \M A \hookrightarrow \BL A \hookrightarrow \I A \hookrightarrow A\!^\sigma
    \]
    (which can also be proved directly without the use of Priestley duality). 
    Furthermore, $A$ is isomorphic to a bounded sublattice of each of the four, $\M A$ is isomorphic to a bounded sub-$\meet$-semilattice of $\BL A$, $\BL A$ is isomorphic to a sublocale of $\I A$, and $\I A$ is isomorphic to a subframe of $A\!^\sigma$. For a direct proof that $\BL A$ is isomorphic to a sublocale of $\I A$ we refer to \cite[Thm.~5.17]{bezhanishvili_semilattice_2024}. 
\end{remark}
\cref{tab:my_label} 
summarizes the completions of a distributive lattice that will be considered in this paper, together with their representations. The third column provides the ideal representations and the last column the Priestley representations.

\begin{table}[h]
    \centering
    \begin{tabular}{|c|c|c|c|c|}
    \hline
   Distributive   & $A$ & principal ideals&${\sf L}(X)$ & clopen upsets of $X$ \\
      lattice& &  &&\\
        \hline
   Dedekind-MacNeille& $\M A$ & normal ideals & ${\sf DM}(X)$& DM-upsets of $X$ \\
      completion & &  & & \\ 
         \hline
          Bruns-Lakser &  $\BL A$& D-ideals& $\sf {BL}(X)$&BL-upsets of $X$   \\
           completion& &  &&\\
          \hline
           Ideal &$\I A$   & ideals& $\sf {OpUp}(X)$ &open upsets of $X$ \\
            completion& &  &&\\
        \hline
       Canonical&$A\!^\sigma$ & Galois closed sets &$\sf {Up}(X)$&upsets of $X$\\
  completion& & of ideals &&\\
       \hline
     \end{tabular}
    \caption{Summary of completions and their representations.}
    \label{tab:my_label}
\end{table}

\section{The proHeyting extension}\label{sec: pH}
The concept of a proHeyting lattice was introduced in \cite{ball_dedekind-macneille_2016} to characterize when the Dedekind-MacNeille completion of a distributive lattice is a frame. We show that with each $A \in \DLat$ we can associate a natural proHeyting lattice which we call the proHeyting extension of $A$. This extension will enable us to characterize when $\BL A$ is subfit, regular, or Boolean. 

As we pointed out in the previous section, for $A\in\DLat$, 
the Dedekind-MacNeille completion  $\M A$ can be constructed as the complete lattice of normal ideals (see, e.g., \cite[p.~63]{gratzer_lattice_2011}), while the Bruns-Lakser completion $\BL A$ as the frame of D-ideals of $A$ (\cite{bruns_injective_1970}). 
To recall relevant definitions, for $S\subseteq A$ let $S^u$ denote the set of upper bounds and $S^\ell$ the set of lower bounds; $S$ is \emph{admissible} if 
    $\bigvee S$ exists and $a\wedge\bigvee S = \bigvee\{ a\wedge s : s \in S\}$ for each $a\in A$.

\begin{definition}\label{defn: normal and D-ideal}
Let $A\in\DLat$.
\begin{enumerate}
\item An ideal $N$ of $A$ is {\em normal} if $N = N^{u\ell}$ (equivalently, $N$ is an intersection of principal ideals).
      \item A downset $D$ of $A$ is a {\em D-ideal} if $\bigvee S\in D$ for each admissible $S \subseteq D$. 
    \end{enumerate}    
\end{definition}
    The definition of a proHeyting lattice requires the notion of a relative annihilator:
   \begin{definition}\cite{mandelker_relative_1970}
 A {\em relative annihilator} of a lattice $A$ is a downset of the form 
\[
\langle a,b \rangle = \{ x\in A : a\wedge x\le b\}.
\]
\end{definition}

\begin{remark}\label{rem: rel ann}\
    \begin{enumerate}[label=\upshape(\arabic*), ref = \thetheorem(\arabic*)]
        \item For bounded $A$, ${\downarrow} a = \langle 1,a \rangle$ for all $a \in A$

        \item If $A$ is distributive then $\langle a,b \rangle$ is an ideal. In fact, $\langle a,b \rangle$ is the relative pseudocomplement ${\downarrow} a \to {\downarrow} b$ in $\I A$, and $\langle a,b \rangle$ is principal iff $a\to b$ exists in $A$.\label[remark]{rem: rel ann 2} 
  
      \end{enumerate}
\end{remark}
 Relative annihilators may be thought of as the building blocks for $\BL A$:\footnote{The referee pointed out to us that the dual statement of \cref{lem: ann is D} for filters is given in \cite[Prop.~5.5]{jakl_canonical_2024}.}

\begin{lemma} \label{lem: ann is D} 
\hfill
\begin{enumerate}[label=\upshape(\arabic*), ref = \thetheorem(\arabic*)]
    \item Each relative annihilator is a D-ideal.\label[lemma]{lem: ann is D 1} 
    \item Each D-ideal is an intersection of relative annihilators.\label[lemma]{lem: ann is D 2}   
\end{enumerate}    
\end{lemma}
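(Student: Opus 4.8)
The plan is to handle the two parts separately: \cref{lem: ann is D 1} is a direct verification, and \cref{lem: ann is D 2} carries the real content.

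For \cref{lem: ann is D 1}, fix $a,b\in A$. First, $\langle a,b\rangle$ is a downset, since $x\le y$ and $a\wedge y\le b$ force $a\wedge x\le b$. Now let $S\subseteq\langle a,b\rangle$ be admissible. Then $a\wedge s\le b$ for every $s\in S$, so $b$ is an upper bound of $\{a\wedge s: s\in S\}$, and hence $a\wedge\bigvee S=\bigvee\{a\wedge s:s\in S\}\le b$, i.e.\ $\bigvee S\in\langle a,b\rangle$. (Distributivity of $A$ is what makes $\langle a,b\rangle$ an ideal, per \cref{rem: rel ann 2}, but the D-ideal condition only uses admissibility.)

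For \cref{lem: ann is D 2}, I would first note that an arbitrary intersection of D-ideals is again a D-ideal: if $S$ is admissible and $S\subseteq D_i$ for all $i$, then $\bigvee S\in D_i$ for all $i$. So by part (1) the set $D^\ast:=\bigcap\{\langle a,b\rangle: D\subseteq\langle a,b\rangle\}$ is a D-ideal containing $D$, and it remains only to show $D^\ast\subseteq D$; that is, every $c\notin D$ must lie outside some relative annihilator containing $D$. The crux is the following: if $c\notin D$, then there exists $a\in A$ with $a\wedge c\notin\{a\wedge d:d\in D\}^{u\ell}$. Granting this, choose such an $a$ together with an upper bound $b$ of $\{a\wedge d:d\in D\}$ satisfying $a\wedge c\not\le b$; then $D\subseteq\langle a,b\rangle$ (as $a\wedge d\le b$ for all $d\in D$) while $c\notin\langle a,b\rangle$, which is exactly what we need. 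To prove the crux by contraposition, suppose $a\wedge c\in\{a\wedge d:d\in D\}^{u\ell}$ for \emph{every} $a\in A$, and set $S:=\{c\wedge d:d\in D\}$. Applying the hypothesis with $a=c$ shows that $c$ is both an upper bound of $S$ and below every upper bound of $S$, so $\bigvee S=c$ exists. Applying it with $a=a'\wedge c$ for an arbitrary $a'\in A$ shows similarly that $a'\wedge c=\bigvee\{a'\wedge c\wedge d:d\in D\}=\bigvee\{a'\wedge s:s\in S\}$, i.e.\ $a'\wedge\bigvee S=\bigvee\{a'\wedge s:s\in S\}$. Hence $S$ is admissible; since $D$ is a downset we have $S\subseteq D$, and since $D$ is a D-ideal we conclude $c=\bigvee S\in D$.

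The main obstacle is the admissibility verification inside the crux: one must invoke the standing hypothesis at two well-chosen instances ($a=c$ to produce the join, $a=a'\wedge c$ to produce the required distributivity identity) and be careful about which elements of $A$ are genuine upper bounds of the relevant sets so that membership in the double-polar $\{\,\cdot\,\}^{u\ell}$ actually pins down the join. Once that is in place, everything else — the downset and intersection bookkeeping — is routine.
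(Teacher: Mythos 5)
Your proof of \cref{lem: ann is D 1} is exactly the paper's: the admissibility identity $a\wedge\bigvee S=\bigvee\{a\wedge s:s\in S\}$ together with $a\wedge s\le b$ immediately gives $\bigvee S\in\langle a,b\rangle$. For \cref{lem: ann is D 2} the paper gives no argument at all --- it simply cites two lemmas from an external reference --- so your self-contained proof is a genuine addition rather than a variant. I have checked it and it is correct: closure of D-ideals under intersection is immediate; the reduction of $D^\ast\subseteq D$ to the ``crux'' is just the unpacking of $x\notin T^{u\ell}$ as the existence of an upper bound $b$ of $T$ with $x\not\le b$; and in the contrapositive, the instance $a=c$ does show that $c$ is simultaneously an upper bound of $S=\{c\wedge d:d\in D\}$ and a lower bound of $S^u$, hence $c=\bigvee S$, while the instance $a=a'\wedge c$ shows by the same two-sided argument that $a'\wedge c=\bigvee\{a'\wedge s:s\in S\}$, which is precisely admissibility of $S$; since $S\subseteq D$ (downset) and $D$ is a D-ideal, $c\in D$ follows. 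The only thing your write-up leaves tacit is that being an upper bound of $T$ and lying in $T^{u\ell}$ together force an element to be the least upper bound of $T$, but that is a one-line observation. What your approach buys is a proof of the lemma that is internal to the paper and uses nothing beyond the definitions of admissibility, D-ideal, and the polars $(\cdot)^u$, $(\cdot)^\ell$; the cost is only that the reader must track which instances of the standing hypothesis are being invoked, which you flag explicitly.
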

\begin{proof}
    (1) Let $S\subseteq \langle a,b \rangle$ be admissible. Then $a\wedge\bigvee S = \bigvee \{ a \wedge s : s \in S\} \le b$ since $a\wedge s\le b$ for each $s\in S$. Thus, $\bigvee S\in \langle a,b \rangle$, and hence $\langle a,b \rangle$ is a D-ideal. 

    (2) This follows from \cite[Lem.~3.5,~3.12]{bezhanishvili_semilattice_2024}.
  \end{proof}

\begin{definition} \cite[Def.~3.1]{ball_dedekind-macneille_2016} 
     $A\in{\DLat}$  is {\em proHeyting} if $\langle a,b \rangle$ is a normal ideal for each $a,b\in A$.\footnote{In \cite{ball_dedekind-macneille_2016}, the authors write $b{\downarrow}a$ instead of $\langle a,b \rangle$.}  
\end{definition}

\begin{remark}
    By \cref{rem: rel ann 2}, every Heyting lattice is proHeyting. 
    Therefore, proHeyting lattices provide a natural generalization of Heyting lattices. That this is indeed a proper generalization is shown in
    \cite[Sec.~3]{ball_dedekind-macneille_2016}. 
\end{remark}

 We point out that the importance of proHeyting is in the following:

\begin{proposition} \label{thm: MA frame}
   \cite[Thm.~6.3]{ball_dedekind-macneille_2016} \label[proposition]{thm: MA frame 1}
    For $A\in{\DLat}$, the following are equivalent:
    \begin{enumerate}[label=\upshape(\arabic*), ref = \theproposition(\arabic*)]
        \item $\M A$ is a frame.
        \item $\M A \cong \BL A$.
        \item $A$ is proHeyting.
    \end{enumerate}
    
\end{proposition}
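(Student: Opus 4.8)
The plan is to prove the cycle $(1)\Rightarrow(3)\Rightarrow(2)\Rightarrow(1)$, working throughout with the concrete models recalled at the start of this section: $\M A$ as the poset of normal ideals of $A$ and $\BL A$ as the poset of D-ideals of $A$, both ordered by inclusion, with each $a\in A$ identified with ${\downarrow}a$. Two elementary observations will be used repeatedly. First, meets in $\M A$ — finite and arbitrary alike — are computed as set-theoretic intersections: if $N_i=\bigcap\{{\downarrow}c:c\in N_i^u\}$ are normal ideals, then $\bigcap_i N_i=\bigcap\{{\downarrow}c:c\in\bigcup_i N_i^u\}$ is again an intersection of principal ideals, hence normal, and is plainly the infimum. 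Second, every normal ideal is a D-ideal, since ${\downarrow}a=\langle 1,a\rangle$ is a D-ideal by \cref{lem: ann is D 1}, a normal ideal is an intersection of such principal ideals, and D-ideals are closed under intersection (\cref{fact: Priestley 5}). Granting this, $(2)\Rightarrow(1)$ is immediate: $\BL A$ is a frame \cite{bruns_injective_1970}, hence so is anything isomorphic to it.

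For $(3)\Rightarrow(2)$, I would show that, under the proHeyting hypothesis, a downset of $A$ is a normal ideal iff it is a D-ideal; combined with the second observation above, this yields $\M A=\BL A$ as posets, and since the embeddings $a\mapsto{\downarrow}a$ of $A$ into the two coincide, $\M A\cong\BL A$. One implication is the second observation. Conversely, if $A$ is proHeyting then each relative annihilator $\langle a,b\rangle$ is a normal ideal; by \cref{lem: ann is D 2} every D-ideal is an intersection of relative annihilators, and an intersection of normal ideals is normal, so every D-ideal is normal.

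The heart of the argument is $(1)\Rightarrow(3)$. Fix $a,b\in A$; the goal is to show $\langle a,b\rangle$ is a normal ideal. Since $\M A$ is a frame, it is a complete Heyting algebra, so the relative pseudocomplement $N:=({\downarrow}a)\to({\downarrow}b)$ formed in $\M A$ exists and is a normal ideal. I claim $N=\langle a,b\rangle$ as sets, which finishes the proof. For $N\subseteq\langle a,b\rangle$: the Heyting adjunction gives $N\cap{\downarrow}a\subseteq{\downarrow}b$ (meets in $\M A$ being intersections), so for $x\in N$ we have $x\wedge a\in N\cap{\downarrow}a\subseteq{\downarrow}b$, i.e.\ $x\wedge a\le b$, i.e.\ $x\in\langle a,b\rangle$. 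For $\langle a,b\rangle\subseteq N$: if $x\in\langle a,b\rangle$ then ${\downarrow}x\cap{\downarrow}a={\downarrow}(x\wedge a)\subseteq{\downarrow}b$, so ${\downarrow}x\le N$ in $\M A$ by the adjunction; since $\langle a,b\rangle=\bigcup\{{\downarrow}x:x\in\langle a,b\rangle\}$ is a union of sets each contained in $N$, we get $\langle a,b\rangle\subseteq N$. Hence $\langle a,b\rangle=N\in\M A$, so $A$ is proHeyting.

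The main obstacle is the bookkeeping in $(1)\Rightarrow(3)$: one has to be careful that the Heyting operation of $\M A$ can be evaluated set-theoretically — which rests entirely on meets in $\M A$ being intersections — and that the inequality ${\downarrow}x\le N$ is read off the adjunction rather than from the (more delicate) formula for joins in $\M A$. Once these points are in place, both inclusions are one-line computations, and the remaining implications $(3)\Rightarrow(2)$ and $(2)\Rightarrow(1)$ are soft, using only \cref{lem: ann is D 1}, \cref{lem: ann is D 2}, closure of normal and D-ideals under intersection, and the Bruns--Lakser theorem that $\BL A$ is a frame.
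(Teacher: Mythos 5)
Your proof is correct. Note that the paper does not actually prove this proposition: it is quoted from \cite[Thm.~6.3]{ball_dedekind-macneille_2016}, with an alternate proof attributed to \cite[Thm.~3.15]{bezhanishvili_semilattice_2024}, so there is no in-paper argument to compare against. What you supply is a clean, self-contained derivation from exactly the ingredients this paper does make available: the representation of $\M A$ by normal ideals and of $\BL A$ by D-ideals, the two halves of \cref{lem: ann is D}, and the identification of frames with complete Heyting algebras. Your two preliminary observations are sound (arbitrary intersections of principal ideals are normal, so meets in $\M A$ are intersections; and D-ideals are closed under arbitrary intersection --- this is immediate from the definition, since an admissible $S$ contained in an intersection of D-ideals lies in each of them, so you do not really need to lean on \cref{fact: Priestley 5} for it). The key step $(1)\Rightarrow(3)$ is handled correctly: because meets in $\M A$ are set-theoretic intersections and the order is inclusion, the Heyting adjunction can be read off pointwise, and both inclusions $N\subseteq\langle a,b\rangle$ and $\langle a,b\rangle\subseteq N$ follow as you compute them; in particular you rightly avoid ever invoking the (nontrivial) join formula in $\M A$. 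The implication $(3)\Rightarrow(2)$ via ``normal ideal $=$ D-ideal under proHeyting'' is the natural route and matches the spirit of \cref{prop: M of pH A} elsewhere in the paper. One cosmetic remark: your cycle actually establishes the slightly stronger fact that under (3) the two completions are \emph{equal} as sublattices of the downset lattice, which is worth stating since the paper later uses the isomorphism $\M A\cong\BL A$ as one of completions of $A$, i.e.\ compatibly with the embeddings $a\mapsto{\downarrow}a$ --- your argument delivers exactly that.
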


\begin{remark}
An alternate proof of the above result is given in 
\cite[ Thm.~3.15]{bezhanishvili_semilattice_2024}.
 \end{remark}
  We next introduce our proHeyting extension of $A\in\DLat$. Viewing $\BL A$ as the frame of D-ideals of $A$, it follows from \cref{lem: ann is D} that relative annihilator ideals of $A$ form a meet-dense subposet of $\BL A$.

\begin{definition}
    For $A\in\DLat$, the \emph{proHeyting extension} of $A$ is the bounded sublattice $\pH A$ of $\BL A$ generated by the relative annihilator ideals of $A$. 
\end{definition}

\begin{remark} \label{rem: RA and pHA}
As we will see in \cref{cor: pH A a}, $\pH A$ is always a proHeyting lattice, justifying the name. The following is a convenient way to think about $\pH A$: Let $\mathcal{RA}(A)$ be the poset of finite intersections of relative annihilator ideals. Then $\mathcal {RA}(A)$ is a bounded sub-$\wedge$-semilattice of $\BL A$, and $\pH A$ consists of the finite joins in $\BL A$ of elements of $\mathcal {RA}(A)$. In fact, as is shown in \cite[Sec.~4]{bezhanishvili_subfitness_2024}, $\pH A$ is exactly the distributive lattice envelope of $\mathcal {RA}(A)$.
\end{remark}
 
Since $a\mapsto{\downarrow}a$ embeds $A$ into $\pH A$, from now on we will identify $A$ with its image in $\pH A$. 
Thus, we have the following lattice inclusions
$A \subseteq \pH A \subseteq \BL A$. In the next proposition we characterize when these inclusions are equalities.

\begin{proposition} Let $A\in\DLat$. 
    \begin{enumerate}[label=\normalfont(\arabic*), ref = \theproposition(\arabic*)]
        \item  $A = \pH A$ iff $A$ is Heyting.
        \item $\pH A =\BL A$ iff $\pH A$ is a frame.
    \end{enumerate}
\end{proposition}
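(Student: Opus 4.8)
The plan is to treat the two parts separately, relying on the description of $\pH A$ as the bounded sublattice of $\BL A$ generated by the relative annihilators, on \cref{rem: rel ann}, and on the density facts recorded in \cref{sec: prelims}.

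\emph{Part (1).} I would run both implications through the criterion of \cref{rem: rel ann 2} that $\langle a,b\rangle$ is a principal ideal precisely when $a\to b$ exists in $A$. Since ${\downarrow}a=\langle 1,a\rangle$ by \cref{rem: rel ann}, the principal ideals are already among the generators of $\pH A$, so $A\subseteq\pH A$ always, and $A=\pH A$ is equivalent to $\pH A\subseteq A$. If $A$ is Heyting then each generator $\langle a,b\rangle$ equals the principal ideal ${\downarrow}(a\to b)$, hence lies in $A$; as $A$ is a bounded sublattice of $\BL A$, the bounded sublattice it generates is contained in $A$, giving $\pH A\subseteq A$. Conversely, if $A=\pH A$ then each $\langle a,b\rangle$ lies in $A$, i.e.\ is a principal ideal, so $a\to b$ exists in $A$ for all $a,b$; thus $A$ is Heyting.

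\emph{Part (2).} The forward implication is immediate, since $\BL A$ is always a frame. For the converse I would invoke the uniqueness of the join- and meet-dense completion recalled in \cref{sec: prelims}. The two points to check are that $\pH A$ is join-dense in $\BL A$ --- which holds because $A$ is join-dense in $\BL A$ (being a $\wedge$-semilattice whose injective hull is $\BL A$) and $A\subseteq\pH A$ --- and that $\pH A$ is meet-dense in $\BL A$ --- which follows from \cref{lem: ann is D 2}, noting that arbitrary intersections of D-ideals are again D-ideals, so the intersections there are exactly the meets of $\BL A$. Granting these, $\BL A$ is (a copy of) the Dedekind--MacNeille completion $\M(\pH A)$. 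If now $\pH A$ is a frame, it is in particular a complete lattice, hence equal to its own Dedekind--MacNeille completion; therefore $\pH A=\M(\pH A)=\BL A$.

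The step needing the most care is the verification of the two density claims in Part (2), and ensuring that the hypothesis ``$\pH A$ is a frame'' is used only through completeness, so that no subtlety about joins computed in $\pH A$ versus joins computed in $\BL A$ enters. If one prefers to avoid the abstract uniqueness statement, the conclusion of the converse can instead be obtained directly: given $u\in\BL A$, the meet $m=\bigwedge_{\pH A}\{p\in\pH A : u\le p\}$ exists by completeness of $\pH A$; meet-density gives $m\le u$, and were $m<u$, join-density would produce $a_0\in\pH A$ with $a_0\le u$ but $a_0\not\le m$, contradicting that $a_0$ is a lower bound in $\pH A$ of $\{p\in\pH A : u\le p\}$. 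Hence $u=m\in\pH A$.
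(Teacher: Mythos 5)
Your proof is correct. Part (1) and the forward half of Part (2) coincide with the paper's argument. For the converse of Part (2) you take a genuinely different route: the paper shows that $A$ is join-dense in $\pH A$ and then invokes \cite[Thm.~7.4]{ball_dedekind-macneille_2016} (every frame having $A$ as a base contains $\BL A$), which together with $\pH A\subseteq\BL A$ forces equality. You instead establish that $\pH A$ is join- and meet-dense in $\BL A$ --- join-density from $A\subseteq\pH A$ and the Bruns--Lakser construction, meet-density from \cref{lem: ann is D 2} together with the fact that meets of D-ideals are intersections --- and then argue that a complete sublattice which is both join- and meet-dense must be the whole lattice. This effectively proves \cref{prop: M of pH A} (that $\BL A\cong\M(\pH A)$) en route, which the paper establishes immediately afterwards by the same density facts, so there is no circularity. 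Your approach buys self-containment (no appeal to the external theorem) and uses only completeness of $\pH A$ rather than the full frame hypothesis; your closing direct argument is also the right way to upgrade the abstract uniqueness of the Dedekind--MacNeille completion to the literal equality $\pH A=\BL A$ of sublattices, a point one could otherwise quibble about. The paper's route is shorter granted the cited result. Both are sound.
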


\begin{proof}
    (1) $A$ is Heyting iff $a\to b$ exists in $A$ for each $a,b\in A$, which is equivalent to $\langle a,b \rangle$ being principal for each $a,b\in A$. The latter means that $A = \pH A$. 

    (2) Clearly if $\pH A =\BL A$ then $\pH A$ is a frame. For the converse, since $A$ is join-dense in $\BL A$, for each $x\in \pH A$ we have $x=\bigvee_{\BL A} S$ for some $S\subseteq A$. Since $x\in \pH A$, $x=\bigvee_{\pH A} S$, and so $A$ is join-dense in $\pH A$. By \cite[Thm.~7.4]{ball_dedekind-macneille_2016}, each frame having $A$ as a base contains $\BL A$. Thus, if $\pH A$ is a frame then $\pH A =\BL A$. 
\end{proof}
The following shows that the Bruns-Lakser completion of a distributive lattice can always be realized as the Dedekind-MacNeille completion of a suitably enlarged lattice (the proHeyting extension).
\begin{theorem} \label{prop: M of pH A}
    For any $A \in \DLat$, $\M (\pH A) \cong \BL A$.
\end{theorem}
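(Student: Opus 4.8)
The plan is to verify that $\pH A$, with its embedding into $\BL A$, satisfies the defining universal property of the Dedekind--MacNeille completion: namely that $\pH A$ is both join-dense and meet-dense in $\BL A$, so that by the uniqueness of the Dedekind--MacNeille completion (quoted from \cite{balbes_distributive_1974}) we get $\M(\pH A)\cong\BL A$. Thus everything reduces to two density statements inside the frame $\BL A$ of D-ideals of $A$.

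First I would establish meet-density. By \cref{lem: ann is D 2}, every D-ideal is an intersection of relative annihilators, and each relative annihilator lies in $\mathcal{RA}(A)\subseteq\pH A$ (indeed in $\pH A$ directly). Hence every element of $\BL A$ is a meet (computed in $\BL A$, which here is just intersection of ideals, since $\BL A$ is a sub-$\cap$-semilattice structure compatible with $\I A$) of elements of $\pH A$, giving meet-density immediately.

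Second I would establish join-density. Since $a\mapsto{\downarrow}a$ embeds $A$ into $\pH A$ and $A$ is join-dense in $\BL A$ (the Bruns--Lakser completion is the injective hull of $A$ as a $\wedge$-semilattice, in which $A$ is join-dense, by \cite{bruns_injective_1970} and the discussion in \cref{sec: prelims}), every element of $\BL A$ is already a join of elements of $A\subseteq\pH A$; a fortiori it is a join of elements of $\pH A$. So join-density is automatic.

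With both densities in hand, the uniqueness clause in the characterization of $\M(-)$ (a poset has, up to isomorphism, a unique completion in which it is both join- and meet-dense) applied to $\pH A$ sitting inside the complete lattice $\BL A$ yields $\M(\pH A)\cong\BL A$, naturally extending the given embedding $\pH A\hookrightarrow\BL A$. I do not expect a serious obstacle here; the only point requiring a little care is confirming that meets and joins in $\pH A$ inherited from $\BL A$ are genuinely the meets and joins used to test density — i.e. that the inclusion $\pH A\hookrightarrow\BL A$ is the relevant poset embedding and that "meet-dense" is witnessed by arbitrary meets in $\BL A$, which is exactly what \cref{lem: ann is D 2} provides since relative annihilators generate $\pH A$ as a sublattice and already appear among its elements.
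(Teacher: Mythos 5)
Your proposal is correct and follows essentially the same route as the paper: join-density of $\pH A$ in $\BL A$ is inherited from $A$, meet-density comes from \cref{lem: ann is D 2}, and the conclusion follows from the uniqueness of the completion in which a poset is both join- and meet-dense. No substantive differences.
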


\begin{proof}
  Since $A$ is join-dense in $\BL A$, we have that $\pH A$ is also join-dense in $\BL A$. By \cref{lem: ann is D 2}, the poset of relative annihilators is meet-dense in $\BL A$, and hence so is $\pH A$. Because $\pH A$ is both join- and meet-dense in $\BL A$, we conclude that $\BL A$ is isomorphic to $\M (\pH A)$ (see, e.g., \cite[p.~237]{balbes_distributive_1974}).
\end{proof}

\begin{corollary}\ \label{cor: pH A} 
\begin{enumerate}[label=\normalfont(\arabic*), ref = \thecorollary(\arabic*)]
\item $\pH A$ is proHeyting.\footnote{
 Note that it is not necessarily Heyting (as will be demonstrated in a follow-up paper). 
} \label[corollary]{cor: pH A a}
\item $\BL (\pH A) \cong \BL A$. \label[corollary]{cor: pH A b}
\end{enumerate}
\end{corollary}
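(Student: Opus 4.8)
The plan is to obtain both statements as immediate consequences of \cref{prop: M of pH A} combined with \cref{thm: MA frame}, the latter applied with $\pH A$ playing the role of $A$. First I would observe that this substitution is legitimate: by its very definition $\pH A$ is a bounded sublattice of the frame $\BL A$, hence $\pH A\in\DLat$, so \cref{thm: MA frame} applies to it.

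For (1), I would argue as follows. By \cref{prop: M of pH A} we have $\M(\pH A)\cong\BL A$, and $\BL A$ is a frame, being the Bruns--Lakser completion of $A$. Hence $\M(\pH A)$ is a frame, and the equivalence (1)$\Leftrightarrow$(3) of \cref{thm: MA frame}, read for the lattice $\pH A$, yields that $\pH A$ is proHeyting. (This is exactly the content anticipated in \cref{rem: RA and pHA}, and it justifies the name ``proHeyting extension''.)

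For (2), since we have just seen that $\M(\pH A)$ is a frame, the equivalence (1)$\Leftrightarrow$(2) of \cref{thm: MA frame} gives $\M(\pH A)\cong\BL(\pH A)$. Chaining this with the isomorphism $\M(\pH A)\cong\BL A$ furnished by \cref{prop: M of pH A} produces $\BL(\pH A)\cong\BL A$, as desired.

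There is essentially no substantive obstacle; the only point requiring a word of care is bookkeeping with the embeddings. One wants the isomorphism $\M(\pH A)\cong\BL A$ to be compatible with the standard inclusions $\pH A\hookrightarrow\M(\pH A)$ and $\pH A\hookrightarrow\BL A$, but this is automatic from the proof of \cref{prop: M of pH A}: there $\pH A$ is shown to be both join- and meet-dense in $\BL A$, and the Dedekind--MacNeille completion is unique up to an isomorphism fixing the dense subposet (as recalled in \cref{sec: prelims}). Hence all the isomorphisms invoked above are the canonical ones, and the corollary follows.
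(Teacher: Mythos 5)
Your proposal is correct and follows the same route as the paper: both parts are deduced from \cref{prop: M of pH A} together with the equivalences in \cref{thm: MA frame} applied to $\pH A$. The extra remark about compatibility of the canonical embeddings is fine but not needed for the statement as given.
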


\begin{proof}
    (1) By \cref{prop: M of pH A}, $\M (\pH A)$ is a frame, so $\pH A$ is proHeyting by \cref{thm: MA frame 1}.

    (2) By \cref{prop: M of pH A}, $\BL A \cong \M(\pH A)$. On the other hand, 
    by (1) and \cref{thm: MA frame 1}, $\M(\pH A) \cong \BL (\pH A)$. The result follows. 
\end{proof}

As we saw in \cref{lem: char of M and Dinfty-3}, $\BL A$ is isomorphic to ${\sf BL}(X)$. We now identify the bounded sublattice of ${\sf BL}(X)$ that is isomorphic to $\pH A$.

\begin{lemma}\label{lem: annih char}
    Let $A\in\DLat$, $X$ be the Priestley space of $A$, and $I$ an ideal of $A$. 
    \begin{enumerate}[label=\normalfont(\arabic*), ref = \thelemma(\arabic*)]
        \item $I$ is a relative annihilator iff
    $
    \s(I)=X\setminus {\downarrow}(\s(a)\setminus \s(b))
    $ for some $a,b \in A$. \label[lemma]{lem: annih char 1}
    \item  $I \in \mathcal{RA} (A)$ iff
    $
    \s(I) = X\setminus {\downarrow} K
    $
    for some clopen $K\subseteq X$. \label[lemma]{lem: annih char 2}
    \end{enumerate}
\end{lemma}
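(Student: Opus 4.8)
The plan is to transport both statements through the frame isomorphism $\s\colon\I A\to{\sf OpUp}(X)$ of \cref{lem: char of M and Dinfty-1} and then read off the answer from the Heyting implication of the frame ${\sf OpUp}(X)$ of open upsets of $X$.

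For (1), I would start from the identity $\langle a,b\rangle={\downarrow}a\to{\downarrow}b$ in the frame $\I A$ recorded in \cref{rem: rel ann 2}. Since $\s$ is an isomorphism of frames it preserves relative pseudocomplements, and since $\s({\downarrow}a)=\s(a)$ and $\s({\downarrow}b)=\s(b)$, this gives $\s(\langle a,b\rangle)=\s(a)\to\s(b)$, where $\to$ is now computed in ${\sf OpUp}(X)$. Regarding ${\sf OpUp}(X)$ as the frame of opens of $X$ in the topology of open upsets, its Heyting implication is $U\to V={\sf int_1}\big((X\setminus U)\cup V\big)={\sf int_1}\big(X\setminus(U\setminus V)\big)=X\setminus{\sf cl_1}(U\setminus V)$, and since ${\sf cl_1}={\downarrow}{\sf cl}$ (\cref{sec: prelims}) this equals $X\setminus{\downarrow}{\sf cl}(U\setminus V)$. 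Specializing to $U=\s(a)$, $V=\s(b)$: the set $\s(a)\setminus\s(b)$ is a Boolean combination of clopen upsets, hence clopen, hence closed, so the closure is redundant and $\s(\langle a,b\rangle)=X\setminus{\downarrow}(\s(a)\setminus\s(b))$. This is the forward implication. The converse is then immediate: if $\s(I)=X\setminus{\downarrow}(\s(a)\setminus\s(b))$ for some $a,b\in A$, then by the computation just made $\s(I)=\s(\langle a,b\rangle)$, and injectivity of $\s$ forces $I=\langle a,b\rangle$. (Alternatively, (1) can be proved by a direct prime-filter computation, using the prime filter theorem to extend the filter generated by $x\cup\{a\}$ to a prime filter missing $b$; the frame-theoretic route seems shorter.)

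For (2), the key point is that $\s$ preserves finite meets (\cref{rem: ideals = open upsets 2}). Hence a finite intersection $I=\bigcap_{i=1}^{n}\langle a_i,b_i\rangle$ is sent to $\bigcap_i\big(X\setminus{\downarrow}(\s(a_i)\setminus\s(b_i))\big)=X\setminus{\downarrow}K$ with $K:=\bigcup_i(\s(a_i)\setminus\s(b_i))$ clopen, and since $A=\langle 1,1\rangle$ corresponds to $K=\varnothing$ this proves the ``only if'' direction. Conversely, suppose $\s(I)=X\setminus{\downarrow}K$ with $K$ clopen. By \cref{fact: Priestley 1}, $K$ is a union of basic opens of the form $U\setminus V$ with $U,V$ clopen upsets; being closed in the compact space $X$, $K$ is compact, so finitely many of these suffice: $K=\bigcup_{i=1}^{n}(U_i\setminus V_i)$. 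Writing $U_i=\s(a_i)$ and $V_i=\s(b_i)$ (every clopen upset is $\s$ of a unique element of $A$) and reversing the computation from (1) gives $\s(I)=X\setminus{\downarrow}K=\bigcap_i\s(\langle a_i,b_i\rangle)=\s\big(\bigcap_i\langle a_i,b_i\rangle\big)$, so $I=\bigcap_i\langle a_i,b_i\rangle\in\mathcal{RA}(A)$ by injectivity of $\s$.

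The main obstacle is the frame computation in (1): one must correctly justify the formula $U\to V=X\setminus{\downarrow}{\sf cl}(U\setminus V)$ for the Heyting implication in ${\sf OpUp}(X)$ and observe that the closure drops out because $\s(a)\setminus\s(b)$ is already clopen. Everything else --- preservation of finite meets and of $\to$ by the isomorphism $\s$, the basis description of \cref{fact: Priestley 1}, and compactness of the clopen set $K$ --- is routine.
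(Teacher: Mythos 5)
Your proof is correct, but for part (1) it takes a genuinely different route from the paper. The paper proves $\s(\langle a,b\rangle)=X\setminus{\downarrow}(\s(a)\setminus\s(b))$ by a direct element-wise computation: it unwinds $c\in\langle a,b\rangle$ through the chain $a\wedge c\le b$ iff $\s(a)\cap\s(c)\subseteq\s(b)$ iff ${\downarrow}(\s(a)\setminus\s(b))\cap\s(c)=\varnothing$ (using that $\s(c)$ is an upset), and then takes the union over $c\in\langle a,b\rangle$. You instead invoke the identity $\langle a,b\rangle={\downarrow}a\to{\downarrow}b$ from \cref{rem: rel ann 2}, transport it through the frame isomorphism $\s$, and compute the Heyting implication of ${\sf OpUp}(X)$ as $U\to V=X\setminus{\sf cl_1}(U\setminus V)=X\setminus{\downarrow}{\sf cl}(U\setminus V)$, with the closure disappearing because $\s(a)\setminus\s(b)$ is clopen. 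Both arguments are sound; yours is more structural and has the side benefit of recording the general formula for the Heyting arrow in ${\sf OpUp}(X)$ (the $V=\varnothing$ case of which is exactly the pseudocomplement formula $U^*=X\setminus{\downarrow}{\sf cl}(U)$ used in \cref{sec: regularity}), while the paper's computation is more elementary and self-contained, needing only that $\s(c)$ is an upset rather than the fact that a frame isomorphism preserves relative pseudocomplements. Your converse via injectivity of $\s$ matches the paper's implicit argument. Part (2) is essentially identical in both treatments: push finite intersections through $\s$, pull ${\downarrow}$ out of the finite union, and use that the clopens of $X$ are exactly the finite unions of differences of clopen upsets (your explicit appeal to compactness of $K$ to extract a finite subcover from the basis of \cref{fact: Priestley 1} is a welcome justification of a step the paper states without proof).
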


\begin{proof}
    (1) For $a,b \in A$, we have 
    \begin{eqnarray*}
        c \in \langle a,b \rangle & \mbox{iff} & a\wedge c \le b \\
        & \mbox{iff} & \s(a)\cap\s(c)\subseteq\s(b) \\
        & \mbox{iff} & 
         \left(\s(a)\setminus\s(b)\right)\cap\s(c)=\varnothing \\ 
        & \mbox{iff} & {\downarrow}(\s(a)\setminus\s(b))\cap\s(c)=\varnothing \ \text{  (since $\s(c)$ is an upset)}  \\ 
        & \mbox{iff} & \s(c)\subseteq X\setminus {\downarrow}(\s(a)\setminus \s(b)).
    \end{eqnarray*}
    Therefore, by \cref{rem: ideals = open upsets 2}, $\s(\langle a,b \rangle)=X\setminus {\downarrow}(\s(a)\setminus \s(b))$. The result follows since, by definition, $I$ is a relative annihilator iff $I = \langle a,b \rangle$ for some $a,b \in I$. 

    (2) Let $I \in \mathcal{RA}(A)$. Then $I = \bigcap_{i=1}^n \langle a_i,b_i \rangle$ for some $a_i,b_i \in A$. Therefore, by \cref{rem: ideals = open upsets 2} and (1) we have
    \begin{eqnarray*}
    \s(I) & = & \s\left(\bigcap_{i=1}^n \langle a_i,b_i \rangle\right) = \bigcap_{i=1}^n \s\left(\langle a_i,b_i \rangle\right) \\
    & = & \bigcap_{i=1}^n \left(X\setminus {\downarrow}(\s(a_i)\setminus \s(b_i))\right) = X \setminus \bigcup_{i=1}^n {\downarrow}(\s(a_i)\setminus \s(b_i)) \\
    & = & X \setminus {\xdownarrow{.35cm}} \bigcup_{i=1}^n (\s(a_i)\setminus \s(b_i)). 
    \end{eqnarray*}
    Because each clopen of $X$ is of the form $K=\bigcup _{i=1}^n (\s(a_i)\setminus \s(b_i))$ for some $a_i,b_i \in A$, the result follows.
\end{proof}

Let $X$ be a Priestley space and $K\in{\sf Clop}(X)$. Then $X\setminus{\downarrow}K\in{\sf BL}(X)$, and we define ${\sf pH}(X)$ to be the finite joins in ${\sf BL}(X)$ of elements of this form. We then have:

\begin{proposition} \label{prop: pH A = pH X}
$\pH A$ is isomorphic to ${\sf pH}(X)$.
\end{proposition}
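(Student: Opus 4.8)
The plan is to carry the statement across the Priestley isomorphism $\Phi\colon\BL A\to{\sf BL}(X)$ furnished by \cref{lem: char of M and Dinfty-3}, which (via the description in \cref{rem: ideals = open upsets 2}) sends a D-ideal $I$ of $A$ to the open upset $\s(I)=\bigcup\{\s(a):a\in I\}$. This $\Phi$ is a bounded lattice isomorphism, so the only thing to verify is that it matches the generating family of $\pH A$ inside $\BL A$ with the generating family of ${\sf pH}(X)$ inside ${\sf BL}(X)$; once that is done, preservation of finite joins does the rest.

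Concretely, first I would invoke \cref{rem: RA and pHA}: $\pH A$ is exactly the set of finite joins, taken in $\BL A$, of elements of $\mathcal{RA}(A)$ (the finite intersections of relative annihilator ideals). Next, \cref{lem: annih char 2} identifies the $\Phi$-image of $\mathcal{RA}(A)$: we have $I\in\mathcal{RA}(A)$ iff $\s(I)=X\setminus{\downarrow}K$ for some clopen $K\subseteq X$, so $\Phi$ restricts to a bijection from $\mathcal{RA}(A)$ onto $\{X\setminus{\downarrow}K:K\in{\sf Clop}(X)\}$, which is precisely the family whose finite joins in ${\sf BL}(X)$ define ${\sf pH}(X)$.

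Finally, since $\Phi$ is a lattice isomorphism it preserves finite joins, hence it maps the set of finite joins in $\BL A$ of elements of $\mathcal{RA}(A)$ onto the set of finite joins in ${\sf BL}(X)$ of elements of $\{X\setminus{\downarrow}K:K\in{\sf Clop}(X)\}$; that is, $\Phi[\pH A]={\sf pH}(X)$. Thus $\Phi$ restricts to an isomorphism $\pH A\cong{\sf pH}(X)$ (and, as a by-product, ${\sf pH}(X)$ is a bounded sublattice of ${\sf BL}(X)$). There is no real obstacle here beyond assembling the cited facts; the single point deserving attention is that $\Phi[\pH A]$ equals ${\sf pH}(X)$ on the nose --- not merely that $\Phi[\pH A]\subseteq{\sf pH}(X)$ --- and this is immediate because $\Phi$ is a bijection carrying one generating family exactly onto the other.
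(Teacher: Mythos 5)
Your proposal is correct and follows essentially the same route as the paper: both identify $\BL A$ with ${\sf BL}(X)$ via \cref{lem: char of M and Dinfty-3}, use \cref{rem: RA and pHA} to describe $\pH A$ as the finite joins of $\mathcal{RA}(A)$, and use \cref{lem: annih char 2} to match $\mathcal{RA}(A)$ with the family $\{X\setminus{\downarrow}K : K\in{\sf Clop}(X)\}$ generating ${\sf pH}(X)$. Your write-up merely makes explicit the final step (that a lattice isomorphism carries finite joins of one generating family exactly onto finite joins of the other) which the paper compresses into ``The result follows.''
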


\begin{proof}
    By \cref{lem: char of M and Dinfty-3}, $\BL A$ is isomorphic to ${\sf BL}(X)$. By \cref{rem: RA and pHA}, $\pH A$ is exactly the finite joins of $\mathcal{RA} (A)$ in $\BL A$, which is a sub-$\wedge$-semilattice of $\BL A$. By definition, ${\sf pH}(X)$ is  the finite joins in ${\sf BL}(X)$ of elements of the form $X\setminus{\downarrow}K$, where $K\in{\sf Clop}(X)$. The latter sub-$\wedge$-semilattice of ${\sf BL}(X)$ is isomorphic to $\mathcal{RA} (A)$ by \cref{lem: annih char 2}. The result follows. 
\end{proof}

 We point out that $\pH A$ was constructed as the sublattice of $\BL A$ generated by the relative annihilators of $A$. This may be generalized as follows: by \cite[Thm.~5.17]{bezhanishvili_semilattice_2024}, up to isomorphism, the frames that have $A$ as a lattice base form the interval $[\BL A,\I A]$ in the co-frame of sublocales of $\I A$. For each such $L$, we may consider the sublattice of $L$ generated by the relative annihilators of $A$. In our opinion, these constructions deserve further investigation.

\section{Subfitness}\label{Sec: subfit}
In this section we characterize when the various completions considered in this paper are subfit. This yields necessary and sufficient conditions for when subftness is preserved or reflected by these completions. 

We begin by recalling a characterization of subfitness of a distributive lattice in terms of its Priestley dual.
For $A\in{\DLat}$ and $X$ the Priestley space of $A$, we denote by $\min X$ the set of minimal points of $X$. Since for each $x\in X$ there exists $m\in\min X$ such that $m\le x$,
we see that $\min X$ is order-dense in $X$ in that $X={\uparrow}\min X$ (see, e.g, \cite[p.~42]{priestley_ordered_1984}). The lattice $A$ being subfit exactly amounts to $\min X$ being topologically dense in $X$: 

\begin{proposition} \label{prop: char of subfit}
For $A\in{\DLat}$ and its Priestley space $X$, the following are equivalent:
    \begin{enumerate}[label=\normalfont(\arabic*), ref = \theproposition(\arabic*)]
        \item $A$ is subfit.
        \item $a\not\le b$ implies there is a maximal ideal $M$ with $b \in M$ and $a \notin M$. \label[proposition]{prop: char of subfit 2}
        \item Every principal ideal is an intersection of maximal ideals. \label[proposition]{prop: char of subfit 2a}
        \item $\min X$ is dense in $X$. \label[proposition]{prop: char of subfit 3}
    \end{enumerate}
\end{proposition}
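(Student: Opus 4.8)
The plan is to prove the equivalences in the order $(1)\Leftrightarrow(2)$, $(2)\Leftrightarrow(3)$, $(2)\Leftrightarrow(4)$: the first two are purely lattice-theoretic, and the last is a direct translation through Priestley duality. The only substantive ingredients are one use of the existence of maximal ideals extending a given proper ideal (no worse, choice-wise, than Priestley duality itself) and, for the Priestley translation, the standard fact that maximal ideals of a bounded distributive lattice are prime, whence the dictionary between maximal ideals of $A$ and minimal points of $X$.

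For $(1)\Rightarrow(2)$: given $a\not\le b$, pick $c$ with $a\vee c=1\ne b\vee c$. Since $b\vee c\ne 1$, the principal ideal ${\downarrow}(b\vee c)$ is proper and hence lies inside a maximal ideal $M$; then $b,c\in M$, so $a\in M$ would give $1=a\vee c\in M$, a contradiction, and therefore $a\notin M$ while $b\in M$. For $(2)\Rightarrow(1)$: given $a\not\le b$, take a maximal ideal $M$ with $b\in M$ and $a\notin M$. By maximality the ideal generated by $M\cup\{a\}$ is all of $A$, so $m\vee a=1$ for some $m\in M$; then $c:=m$ satisfies $a\vee c=1$, while $b\vee c\in M$ forces $b\vee c\ne 1$. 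The only point to verify is the familiar description of the ideal generated by an ideal together with one more element, which is routine in a distributive lattice.

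For $(2)\Leftrightarrow(3)$ I would note that for every $b\in A$ one always has ${\downarrow}b\subseteq\bigcap\{M : M\text{ maximal},\ b\in M\}$, and that if ${\downarrow}b$ is written as any intersection of maximal ideals then each of those ideals contains $b$, forcing this inclusion to be an equality; hence $(3)$ holds iff for all $a,b$ with $a\not\le b$ (that is, $a\notin{\downarrow}b$) some maximal ideal contains $b$ but not $a$, which is exactly $(2)$. For $(2)\Leftrightarrow(4)$ I would first record the dictionary: $x\mapsto A\setminus x$ is a bijection between the minimal prime filters of $A$ — equivalently, the minimal points of $X$ — and the maximal ideals of $A$. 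Indeed, a maximal ideal $M$ is prime, so $A\setminus M$ is a prime filter, and it is minimal since any strictly smaller prime filter would have as complement a proper ideal strictly containing $M$; conversely, the complement of a minimal prime filter $x$ is a prime ideal $M$, and extending $M$ to a maximal (hence prime) ideal $M'$ gives $A\setminus M'\subseteq A\setminus M=x$ with both prime filters, so minimality of $x$ forces $A\setminus M'=x$, i.e.\ $M'=M$, and $M$ is already maximal. Now unwind $(4)$: by \cref{fact: Priestley 1} the sets $\mathfrak{s}_A(a)\setminus\mathfrak{s}_A(b)$ form a basis for $X$; such a set is nonempty iff $\mathfrak{s}_A(a)\not\subseteq\mathfrak{s}_A(b)$, i.e.\ iff $a\not\le b$, since $\mathfrak{s}_A$ is a lattice isomorphism onto ${\sf L}(X)$; and a minimal point lies in it iff its corresponding maximal ideal $M$ has $a\notin M$ and $b\in M$. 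Thus $(4)$ says precisely that $a\not\le b$ implies there is a maximal ideal containing $b$ but not $a$, which is $(2)$.

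The argument contains no genuinely difficult step; the one place that needs care is stating and proving the maximal-ideal/minimal-point correspondence cleanly, since every other step is a mechanical reformulation once that dictionary is available.
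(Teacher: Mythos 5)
Your proof is correct and is essentially the standard argument: the paper itself disposes of this proposition by citation (to Delzell--Ighedo--Madden for $(1)\Leftrightarrow(2)$ and to Bezhanishvili--Carai--Morandi for $(1)\Leftrightarrow(4)$, with $(2)\Leftrightarrow(3)$ declared obvious), and what you have written is a faithful self-contained reconstruction of exactly that route, including the maximal-ideal/minimal-prime-filter dictionary that the paper also invokes later when treating subfitness of the ideal completion. The one inaccuracy is your parenthetical claim that extending a proper ideal to a maximal one costs ``no worse, choice-wise, than Priestley duality'': the maximal ideal theorem for bounded distributive lattices is equivalent to the full Axiom of Choice, hence strictly stronger than the Prime Ideal Theorem underlying Priestley duality, though this has no bearing on the correctness of your argument in the ambient set theory the paper works in.
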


\begin{proof}
    (1)$\Leftrightarrow$(2) is proved in \cite[Prop.~3.2]{delzell_conjunctive_2021}, (2)$\Leftrightarrow$(3) is obvious, and (1)$\Leftrightarrow$(4) is proved in \cite[Lem.~3.1]{bezhanishvili_new_2023} when $A$ is a frame, but the same proof works for an arbitrary $A\in\DLat$ (see  also \cite[Prop.~5.2]{bezhanishvili_subfitness_2024}). 
\end{proof}

We next consider subfitness of the Dedekind-MacNeille completion. As pointed out in the introduction, this was first studied in \cite{janowitz_section_1968}. Janowitz does not assume distributivity and uses a formally weaker form of subfitness which, as  will be shown in a forthcoming paper, is equivalent to subfitness as defined above. 
The next result is slightly more general in that completeness of the lattice $B$ plays no r\^ole.
\begin{theorem} \label{thm: Janowitz}
Let $A$ be a bounded sublattice of $B$. 
\begin{enumerate}[label=\normalfont(\arabic*), ref = \thetheorem(\arabic*)]
\item If $A$ is meet-dense in $B$ and $B$ is subfit then $A$ is subfit.
\item If $A$ is both join- and meet-dense in $B$ and $A$ is subfit then $B$ is subfit.
\end{enumerate}
\end{theorem}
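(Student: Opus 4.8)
The plan is to argue both parts directly from the definitions of $\vee$-subfitness and the density hypotheses, exploiting that elements of $B$ are approximated by elements of $A$. For part (1), suppose $a \not\le b$ with $a,b \in A$. Since $A$ is meet-dense in $B$, $b = \bigwedge\{x \in A : b \le x\}$, so $a \not\le b$ forces the existence of some $x \in A$ with $b \le x$ and $a \not\le x$. Now apply $\vee$-subfitness of $B$ to the pair $a \not\le x$: there is $d \in B$ with $a \vee d = 1 \ne x \vee d$. The element $d$ need not lie in $A$, so I would replace it by a suitable element of $A$. Using meet-density again, write $d = \bigwedge\{c \in A : d \le c\}$; since $x \vee d \ne 1$ and finite meets distribute appropriately (or simply: $x \vee d = \bigwedge\{x \vee c : d \le c,\ c \in A\}$ fails to be $1$), there is $c \in A$ with $d \le c$, hence $a \vee c = 1$, and $x \vee c \ne 1$, so a fortiori $b \vee c \ne 1$. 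This witnesses $\vee$-subfitness of $A$ for the pair $a \not\le b$.

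For part (2), suppose $u \not\le v$ with $u,v \in B$. Since $A$ is join-dense in $B$, $u = \bigvee\{a \in A : a \le u\}$, so there is $a \in A$ with $a \le u$ and $a \not\le v$; and since $A$ is meet-dense in $B$, $v = \bigwedge\{b \in A : v \le b\}$, so there is $b \in A$ with $v \le b$ and $a \not\le b$. Thus the pair $a \not\le b$ lies entirely in $A$, and $\vee$-subfitness of $A$ supplies $c \in A$ with $a \vee c = 1 \ne b \vee c$. It remains to check this same $c$ works for $u \not\le v$ in $B$: since $a \le u$ we get $u \vee c \ge a \vee c = 1$, and since $v \le b$ we get $v \vee c \le b \vee c \ne 1$, so $u \vee c = 1 \ne v \vee c$. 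I would then remark that the $\wedge$-subfit case is dual (replace join-density by meet-density and vice versa, and run the symmetric argument), or handle it by the analogous computation with $0$ and $\wedge$ in place of $1$ and $\vee$.

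The main obstacle is the step in part (1) where the separating element $d \in B$ produced by subfitness of $B$ must be pulled back into $A$; this is the only place where meet-density (rather than join-density) of $A$ in $B$ is used in part (1), and one must be careful that passing from $d$ to an over-approximation $c \in A$ preserves $a \vee c = 1$ (monotonicity, immediate) while not accidentally forcing $b \vee c = 1$. The key point is that $x \vee d \ne 1$ together with meet-density gives an over-approximation $c$ of $d$ in $A$ with $x \vee c \ne 1$: here one uses that in a complete lattice $1 = \bigwedge \varnothing$ is only obtained as a meet of a family all of whose members one can avoid, i.e. if every $c \in A$ with $d \le c$ had $x \vee c = 1$ then $x \vee d = x \vee \bigwedge\{c\} \ge \bigwedge\{x \vee c\} $ — wait, the needed inequality is $x \vee \bigwedge_i c_i \le \bigwedge_i (x \vee c_i)$, which always holds, so $x \vee d \le \bigwedge\{x \vee c : d \le c,\ c\in A\}$; if all those $x \vee c$ equalled $1$ we could not yet conclude, so instead one uses meet-density of $A$ applied to the element $x \vee d$ itself: $x \vee d = \bigwedge\{c \in A : x \vee d \le c\}$, and since $x \vee d \ne 1$ this meet has a member $c \ne 1$ with $x \le c$ and $d \le c$, which is exactly what is needed. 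I would present part (1) using this cleaner formulation, applying meet-density directly to $x \vee d$.
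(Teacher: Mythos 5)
Your proposal is correct and, once you settle on the ``cleaner formulation'' at the end (applying meet-density directly to the element $x\vee d\ne 1$ to obtain $c\in A$ with $x\vee d\le c<1$), it is essentially the paper's own proof of both parts. The only inefficiency is the opening move of part (1), where you replace $b$ by some $x\in A$ with $b\le x$ and $a\not\le x$: since $a,b$ already lie in $A$, this step is vacuous (take $x=b$) and can be dropped.
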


\begin{proof} 
(1) Suppose $a\nleq b$ in $A$. Since $B$ is subfit, there is $x\in B$ with $a\join x=1$ but $b\join x\ne 1$. Because $A$ is meet-dense in $B$, there is $c\in A$ with $b\join x\leq c < 1$. Therefore, $a\join c = 1$ but $b\join c \ne 1$ in $A$, and hence $A$ is subfit. 

(2) Suppose $x\nleq y$ in $B$. Since $A$ is both join- and meet-dense in $B$, there are $a,b\in A$ with $a\le x$, $y\le b$, and $a\nleq b$. Because $A$ is subfit, there is $c\in A$ with $a\vee c = 1$ but $b\vee c\ne 1$ in $A$. Therefore, $x\vee c=1$ but $y\vee c\neq 1$ in $B$. Thus, $B$ is subfit.
\end{proof}

\begin{corollary}\cite[Cor. 2.5]{janowitz_section_1968}
     \label{thm: MA subfit implies A subfit}
    Let $A$ be a bounded lattice. Then $\M A$ is subfit iff $A$ is subfit.
\end{corollary}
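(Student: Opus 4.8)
The plan is to derive this immediately from \cref{thm: Janowitz} together with the defining property of the Dedekind-MacNeille completion recalled in \cref{sec: prelims}: up to isomorphism, $\M A$ is the unique completion of $A$ in which $A$ is simultaneously join-dense and meet-dense. In particular, both density hypotheses appearing in the two parts of \cref{thm: Janowitz} are automatically satisfied when we take $B = \M A$, so the corollary is just a matter of applying each part in the appropriate direction.

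Concretely, for the forward implication suppose $\M A$ is subfit. Since $A$ is meet-dense in $\M A$, part (1) of \cref{thm: Janowitz} (with $B = \M A$) yields that $A$ is subfit. For the converse, suppose $A$ is subfit. Since $A$ is both join- and meet-dense in $\M A$, part (2) of \cref{thm: Janowitz} (again with $B = \M A$) yields that $\M A$ is subfit. Combining the two gives the stated equivalence. Since $\M A$ is a complete lattice, the result holds in particular for the notion of subfitness as formulated for frames, recovering Janowitz's original statement.

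There is no genuine obstacle here; all of the real work is carried by \cref{thm: Janowitz}, and the corollary merely records that the Dedekind-MacNeille completion satisfies the hypotheses of both parts. The one point worth flagging is the built-in asymmetry: the direction ``$\M A$ subfit $\Rightarrow$ $A$ subfit'' uses only meet-density, whereas ``$A$ subfit $\Rightarrow$ $\M A$ subfit'' genuinely requires both join- and meet-density — which is exactly why part (2) of the theorem carries the stronger hypothesis, and it is matched precisely by the Dedekind-MacNeille completion and not, say, by an arbitrary completion.
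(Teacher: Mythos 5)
Your proposal is correct and follows exactly the paper's own route: identify $A$ with its image in $\M A$, note that $A$ is both join- and meet-dense there, and apply the two parts of \cref{thm: Janowitz} in the two directions. The remark about the asymmetry of the density hypotheses is accurate but not needed beyond what the paper records.
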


\begin{proof} 
Identify $A$ with a sublattice of $\M A$ and observe that $A$ is both join- and meet-dense in $\M A$. Now apply \cref{thm: Janowitz}.
\end{proof}

We now consider subfitness for the Bruns-Lakser completion of $A$. As we saw in \cref{thm: MA subfit implies A subfit}, subfitness of $\M A$ is equivalent to that of $A$; in comparison,  subfitness of $\BL A$ is equivalent to that of $\pH A$: 

\begin{theorem}\label{thm: main} 
 For $A\in\DLat$, $\BL A$ is subfit iff $\pH A$ is subfit.
\end{theorem}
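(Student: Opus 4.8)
The plan is to reduce the statement to Janowitz's result via the identification $\BL A \cong \M(\pH A)$. By \cref{prop: M of pH A}, $\BL A$ is, up to isomorphism, the Dedekind--MacNeille completion of $\pH A$; and by \cref{thm: MA subfit implies A subfit} the Dedekind--MacNeille completion of a bounded lattice is subfit if and only if the lattice itself is subfit. Chaining these two facts yields the equivalence at once: $\BL A$ is subfit iff $\M(\pH A)$ is subfit iff $\pH A$ is subfit.

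Alternatively --- and this is the same argument unwound --- one can invoke \cref{thm: Janowitz} directly. Recall from the proof of \cref{prop: M of pH A} that $\pH A$, viewed as a bounded sublattice of $\BL A$, is both join-dense and meet-dense in $\BL A$. Then \cref{thm: Janowitz}(1), applied with $B = \BL A$ and using meet-density, shows that if $\BL A$ is subfit then $\pH A$ is subfit; and \cref{thm: Janowitz}(2), using both join- and meet-density, gives the converse. Either route produces a one-line proof once the structural facts from \cref{sec: pH} are in hand.

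I do not anticipate any genuine obstacle here: all the content of the statement is carried by facts already established, namely that $\pH A$ sits join- and meet-densely inside $\BL A$. The only point worth a moment's attention is that $\pH A$ is a \emph{bounded sublattice} of $\BL A$, not merely a subposet --- but this is built into its definition as the bounded sublattice of $\BL A$ generated by the relative annihilator ideals, so \cref{thm: Janowitz} (or, equivalently, \cref{thm: MA subfit implies A subfit}) applies verbatim. Accordingly, I would present the result as an immediate corollary of \cref{prop: M of pH A} together with \cref{thm: MA subfit implies A subfit}.
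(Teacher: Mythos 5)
Your proposal is correct and is exactly the paper's argument: the paper proves this by citing \cref{prop: M of pH A} to get $\BL A \cong \M(\pH A)$ and then applying \cref{thm: MA subfit implies A subfit}. Your alternative unwinding via \cref{thm: Janowitz} is just the same proof with the corollary inlined, so there is nothing substantively different here.
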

\begin{proof}
By \cref{prop: M of pH A}, $\BL A \cong \M(\pH A)$. Therefore, by \cref{thm: MA subfit implies A subfit}, $\BL A$ is subfit iff $\pH A$ is subfit.
\end{proof}
\begin{remark} \label{rem: min pX}
For $A\in\DLat$, let $X$ be the Priestley dual of $A$ and $pX$ the Priestley dual of $\pH A$. 
As an immediate consequence of \cref{prop: char of subfit,thm: main}, we obtain: $\BL A$ is subfit iff $\min pX$ is dense in $pX$. It is interesting to give an explicit description of $pX$, which we aim to do in future work.
\end{remark}
 As with $\M A$, we show that $\BL A$ preserves subfitness. However, unlike $\M A$, it does not reflect it. For this we need the following:
\begin{lemma} \label{prop: subfit implies proH}
   \cite[Lem.~7.4]{ball_lindelof_2017}  If $A\in{\DLat}$ 
    is subfit then $A$ is proHeyting. 
   \end{lemma}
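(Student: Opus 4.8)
The plan is to prove the contrapositive: assume $A$ is not proHeyting and produce a witness to failure of subfitness. Recall that by \cref{prop: char of subfit}, subfitness of $A$ is equivalent to every principal ideal being an intersection of maximal ideals, or equivalently, to $a \not\le b$ implying there is a maximal ideal $M$ with $b \in M$, $a \notin M$. Meanwhile, $A$ failing to be proHeyting means there exist $a,b \in A$ such that the relative annihilator $\langle a,b \rangle$ is \emph{not} a normal ideal, i.e. $\langle a,b \rangle \ne \langle a,b \rangle^{u\ell}$. The idea is that this strict inequality of ideals hands us a concrete obstruction that no maximal ideal can separate, contradicting subfitness.

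Working on the Priestley dual $X$ of $A$ is likely to be the cleanest route, since \cref{prop: char of subfit} already reformulates subfitness as density of $\min X$ in $X$, and \cref{lem: annih char 1} tells us $\s(\langle a,b \rangle) = X \setminus {\downarrow}(\s(a)\setminus\s(b))$. First I would translate "$\langle a,b\rangle$ not normal" into a topological statement about $X$: since normal ideals correspond (by \cref{lem: char of M and Dinfty-2}) to DM-upsets, i.e. fixpoints of ${\sf int_1}{\sf cl_2}$, the failure of normality of $\langle a,b \rangle$ says that the open upset $U := X \setminus {\downarrow}(\s(a)\setminus\s(b))$ is not equal to ${\sf int_1}{\sf cl_2}(U)$; equivalently its "normalization" ${\sf int_1}{\sf cl_2}(U)$ strictly contains $U$. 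Pick a point $p \in {\sf int_1}{\sf cl_2}(U) \setminus U$. The plan is then to show that, because $p \notin U$ yet $p$ lies in the smallest DM-upset above $U$, there is a minimal point $m \le p$ (using $X = {\uparrow}\min X$) which must also lie in ${\sf int_1}{\sf cl_2}(U)$ but \emph{not} in any clopen upset separating the relevant elements; the density of $\min X$ in $X$ would then be contradicted because a suitable open neighborhood of $p$ (living inside the normalization) meets $\min X$ only in points forced into $U$.

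More concretely, the key steps, in order, are: (i) fix $a,b$ with $\langle a,b\rangle$ non-normal and set $U = \s(\langle a,b\rangle)$, $K = \s(a)\setminus\s(b)$, so $U = X \setminus {\downarrow}K$ and $K \ne \varnothing$; (ii) compute that the normal-ideal closure of $\langle a,b\rangle$ corresponds to ${\sf int_1}{\sf cl_2}(U)$ and, using the formulas ${\sf cl_2}(S) = {\uparrow}{\sf cl}(S)$ and $x \in {\sf int_1}(S)$ iff ${\uparrow}x \subseteq {\sf int}(S)$, unwind what it means for this to strictly exceed $U$ — namely there is $p$ with ${\uparrow}p \subseteq {\sf int}\,{\uparrow}{\sf cl}(U)$ but ${\uparrow}p \cap K \ne \varnothing$; (iii) choose $m \in \min X$ with $m \le p$, so ${\uparrow}m \supseteq {\uparrow}p$ still meets $K$, hence $m \notin U$; (iv) derive a contradiction with density of $\min X$: the open set ${\sf int}\,{\uparrow}{\sf cl}(U)$ (or a clopen refinement of it containing $m$) is a neighborhood all of whose minimal points would have to avoid $K$, i.e. lie in $U$, yet $m$ itself is such a minimal point lying in $K$ — contradicting either the choice of $p$ or subfitness. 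I would organize (iv) so that it is subfitness of $A$ that gives, for $a \not\le b$, a maximal ideal (equivalently $m \in \min X$) with $m \in \s(a)\setminus\s(b) = K$ and simultaneously $m$ forced into $U$ by the neighborhood, which is the desired contradiction.

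The main obstacle I anticipate is step (ii)/(iv): correctly pinning down which open upset represents the normal closure $\langle a,b\rangle^{u\ell}$ and extracting from "$U \subsetneq {\sf int_1}{\sf cl_2}(U)$" a single point $p$ with a usable neighborhood that is small enough to be controlled by clopen upsets yet large enough to force a contradiction with density. The interplay of three topologies (Stone, open-upset, open-downset) and the operators ${\sf int_1}, {\sf cl_2}, {\sf cl}$ makes it easy to slip a quantifier; the cleanest safeguard will be to do everything in terms of the explicit pointwise criteria listed after \cref{fact: Priestley}, namely $x \in {\sf cl_2}(S) \iff {\downarrow}x \cap {\sf cl}(S) \ne \varnothing$ and $x \in {\sf int_1}(S) \iff {\uparrow}x \subseteq {\sf int}(S)$, and to use \cref{fact: Priestley 1} to replace any open upset neighborhood by a basic clopen difference $V \setminus W$ before invoking density of $\min X$. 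If the Priestley-dual bookkeeping proves unwieldy, a fallback is a purely lattice-theoretic argument: from $\langle a,b\rangle \subsetneq \langle a,b\rangle^{u\ell}$ pick $d \in \langle a,b\rangle^{u\ell} \setminus \langle a,b\rangle$, so $a \wedge d \not\le b$, and show that any maximal ideal $M$ with $b \in M$, $a \wedge d \notin M$ (which subfitness would supply) contains an upper bound of $\langle a,b\rangle$ below $d$, contradicting $d \in \langle a,b\rangle^{u\ell}$; this avoids duality entirely but requires care in manipulating the upper-bound set $\langle a,b\rangle^u$.
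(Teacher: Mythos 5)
The paper does not actually prove this lemma --- it is imported verbatim from \cite[Lem.~7.4]{ball_lindelof_2017} --- so there is nothing internal to compare against; I can only assess your argument on its own terms. Your primary (Priestley-dual) route contains a genuine gap at steps (iii)--(iv). Having found $p\in{\sf int_1}{\sf cl_2}(U)\setminus U$ with $U=X\setminus{\downarrow}K$, $K=\s(a)\setminus\s(b)$, you take $m\in\min X$ with $m\le p$. That step uses only the order-density $X={\uparrow}\min X$, which holds in \emph{every} Priestley space, so it cannot be where subfitness enters; moreover such an $m$ need not lie in ${\sf int}\,{\uparrow}{\sf cl}(U)$ (from $m\le p$ and ${\uparrow}p\subseteq{\sf int}\,{\uparrow}{\sf cl}(U)$ you get nothing about ${\uparrow}m\supseteq{\uparrow}p$), nor need $m$ lie in $K$ (only some point of ${\uparrow}m$ does), so the contradiction in (iv) does not assemble. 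The repair is to apply \emph{topological} density of $\min X$ (i.e.\ \cref{prop: char of subfit}) to the open set $K\cap{\sf int}\,{\uparrow}{\sf cl}(U)$: this set is nonempty because $p\notin U$ yields $q\in{\uparrow}p\cap K$ while ${\uparrow}p\subseteq{\sf int}\,{\uparrow}{\sf cl}(U)$. A minimal point $m$ in it satisfies $m\in{\uparrow}{\sf cl}(U)$, hence $m\in{\sf cl}(U)$ by minimality, which contradicts $m\in K$ since $K$ is open and disjoint from $U$. With that substitution the dual argument goes through.

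Your lattice-theoretic fallback is in fact the standard proof and is the better route, but as written it stops exactly at the decisive computation. From $d\in\langle a,b\rangle^{u\ell}\setminus\langle a,b\rangle$ one has $a\wedge d\not\le b$; subfitness gives $c$ with $(a\wedge d)\vee c=1\ne b\vee c$ (equivalently, your maximal ideal $M$ with $b\in M$, $a\wedge d\notin M$ supplies such a $c\in M$). The point you defer as ``requires care'' is precisely that $b\vee c$ is an upper bound of $\langle a,b\rangle$: for $x$ with $a\wedge x\le b$,
\[
x \;=\; x\wedge\bigl((a\wedge d)\vee c\bigr) \;=\; (x\wedge a\wedge d)\vee(x\wedge c)\;\le\;(a\wedge x)\vee c\;\le\; b\vee c,
\]
which is where distributivity is used; and $d\not\le b\vee c$ since otherwise $(a\wedge d)\vee c\le b\vee c\ne1$. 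This exhibits an upper bound of $\langle a,b\rangle$ that $d$ is \emph{not} below (your phrase ``below $d$'' has the inequality backwards), contradicting $d\in\langle a,b\rangle^{u\ell}$. So the proposal identifies the right ingredients, but neither route is complete as stated: the main one picks the wrong witness point, and the fallback omits the one inequality that makes it work.
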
 
 
\begin{samepage}
\begin{theorem} \label{thm: Fred Q1}
Let $A\in{\DLat}$. 
\begin{enumerate}[label=\normalfont(\arabic*), ref = \thetheorem(\arabic*)]
    \item $A$ subfit implies that $\BL A$ is subfit. \label[theorem]{thm: Fred Q1 a}
    \item  $\BL A$ subfit does not imply that $A$ is subfit. \label[theorem]{thm: Fred Q1 b}
\end{enumerate}    
\end{theorem}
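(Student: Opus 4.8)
For part (1), the plan is to combine \cref{thm: main} with \cref{prop: subfit implies proH}. By \cref{thm: main}, $\BL A$ is subfit iff $\pH A$ is subfit. By \cref{prop: M of pH A} and \cref{cor: pH A a}, $\pH A$ is proHeyting, and by \cref{thm: MA frame 1} this means $\M(\pH A)$ is a frame, i.e.\ $\M(\pH A)\cong\BL(\pH A)$. So it would suffice to show that $A$ subfit implies $\pH A$ subfit. Here I would argue directly: suppose $A$ is subfit, take $x\not\le y$ in $\pH A$. Since $A$ is join- and meet-dense in $\BL A$ (hence in $\pH A$), there are $a,b\in A$ with $a\le x$, $y\le b$, $a\not\le b$; subfitness of $A$ gives $c\in A$ with $a\vee c=1\ne b\vee c$ in $A$. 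Then, computing joins in $\pH A$, we get $x\vee c=1$ but $y\vee c\ne 1$ (the latter because $y\vee c\le b\vee c$, and if $b\vee c=1$ in $\pH A$ it would force $b\vee c=1$ in $A$ since $A$ is meet-dense in $\pH A$ — any element below $1$ in $\pH A$ lies below a proper element of $A$). Hence $\pH A$ is subfit. Alternatively, and perhaps more cleanly: $A$ subfit $\Rightarrow$ $A$ proHeyting (\cref{prop: subfit implies proH}) $\Rightarrow$ $\M A\cong\BL A$ (\cref{thm: MA frame 1}), and $\M A$ is subfit by \cref{thm: MA subfit implies A subfit}; therefore $\BL A$ is subfit. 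This second route is shorter and I would use it as the main argument.

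For part (2), the task is to exhibit a distributive lattice $A$ that is \emph{not} subfit but whose Bruns--Lakser completion \emph{is} subfit. I expect this to be the substantive part of the theorem. The strategy is to work on the Priestley side: by \cref{prop: char of subfit}, $A$ is not subfit iff $\min X$ is not dense in the Priestley space $X$ of $A$, while by \cref{rem: min pX}, $\BL A$ is subfit iff $\min pX$ is dense in the Priestley space $pX$ of $\pH A$. So I would look for a Priestley space $X$ in which $\min X$ fails to be dense, yet after passing to the proHeyting extension the minimal points "spread out" enough to become dense. A natural place to look is a small, concrete example: take $X$ to have an isolated point $p$ that is \emph{not} minimal (so some nonempty clopen misses $\min X$, killing subfitness of $A$), but arranged so that in $\pH A$ the extra relative-annihilator elements $X\setminus{\downarrow}K$ refine the open-upset structure around $p$ so that every nonempty BL-upset of $pX$ meets $\min pX$. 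Concretely I would try $X$ equal to a two-element chain with an extra isolated maximal point above the bottom, or a similar finite/near-finite poset, compute $\pH A$ (using \cref{prop: pH A = pH X}: finite joins in ${\sf BL}(X)$ of sets $X\setminus{\downarrow}K$ for clopen $K$), and check subfitness of $\pH A$ directly via \cref{prop: char of subfit 2a} (every principal ideal of $\pH A$ is an intersection of maximal ideals).

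The main obstacle I anticipate is twofold: first, finding the right example — it must be simple enough to compute $\pH A$ explicitly yet genuinely non-subfit, and the non-distributivity subtleties of $\M A$ are not available to help since $\pH A$ is always proHeyting; second, verifying that $\pH A$ \emph{is} subfit, which requires understanding the lattice $\mathcal{RA}(A)$ of relative annihilators and the finite joins they generate well enough to see that maximal ideals of $\pH A$ separate all its elements. I would mitigate this by choosing $X$ finite or with a single non-isolated/non-minimal point, so that ${\sf BL}(X)$ and ${\sf pH}(X)$ are finite lattices whose subfitness can be checked by inspection of the order diagram. If a finite example does not exist (which is plausible — for finite $A$, $A=\M A=\BL A$, so one needs $A$ infinite), I would instead take $X$ to be, e.g., a convergent sequence together with limit points arranged so that the sole obstruction to density of $\min X$ is removed upon forming relative annihilators; the verification then reduces to a density computation in the topology of open upsets, using \cref{fact: Priestley} and the closure formulas recorded in \cref{sec: prelims}.
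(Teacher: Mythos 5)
Part (1): your preferred ``second route'' is exactly the paper's proof (subfit $\Rightarrow$ proHeyting by \cref{prop: subfit implies proH}, so $\M A\cong\BL A$ by \cref{thm: MA frame 1}, and $\M A$ is subfit by \cref{thm: MA subfit implies A subfit}), so that part is correct. Be aware, though, that your first route contains a false step: $A$ is \emph{not} meet-dense in $\BL A$ (only the relative annihilators, hence $\pH A$, are meet-dense there, by \cref{lem: ann is D 2}), and your parenthetical claim that every element of $\pH A$ below $1$ lies below a proper element of $A$ fails in general --- e.g.\ for $A$ the finite subsets of $\N$ together with $\N$, the annihilator $\langle \{1\},\varnothing\rangle$ is a proper element of $\pH A$ contained in no proper principal ideal. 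Since you discard that route, this does not damage the proof, but it would if you relied on it.

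Part (2) is where the genuine gap lies: the substantive content of this half of the theorem is a concrete counterexample together with a verification that its Bruns--Lakser completion is subfit, and your proposal supplies neither. You correctly rule out finite examples and correctly guess the right shape (a convergent sequence with an isolated non-minimal point), which is indeed what the paper uses: $X$ is the one-point compactification of the discrete space $\{x_0,x_1,\dots\}\cup\{y\}$, with $x_\infty<y$ the only nontrivial order relation, so that $\min X$ omits the isolated point $y$ and $A={\sf L}(X)$ is not subfit. But the verification that ${\sf BL}(X)$ is subfit is not a routine density computation: the paper argues directly from the definition of subfitness, splitting into three cases according to whether a point of $U\setminus V$ is some $x_n$, is $y$, or is $x_\infty$, and in each case producing a witness $W$ using the join formula $U\vee W={\sf int_1}{\sf cl}(U\cup W)$ from \cref{fact: Priestley 5}; the last case requires the compactness argument that any $U\in{\sf BL}(X)$ containing $x_\infty$ contains cofinitely many $x_n$ while $V$ can contain only finitely many. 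Your alternative plan --- to verify subfitness of $\pH A$ via its maximal ideals using \cref{prop: char of subfit 2a} --- would require an explicit description of $\pH A$ or of its Priestley space $pX$, which the paper explicitly does not have (\cref{rem: min pX} defers this to future work), so that route is unlikely to be tractable. As written, the proposal is a search strategy rather than a proof of part (2).
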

\end{samepage}

\begin{proof} 
(1) If $A$ is subfit, then $A$ is proHeyting by \cref{prop: subfit implies proH}, hence $\M A \cong \BL A$ by \cref{thm: MA frame}. Therefore, $\BL A$ is subfit by \cref{thm: MA subfit implies A subfit}.

(2) Let $X$ be the space depicted in \cref{Figure 1}, where $\{x_0,x_1,\dots\}\cup\{y\}$ is the set of isolated points and $x_\infty$ is the limit of $\{x_0,x_1,\dots\}$. In other words, $X$ is the one-point compactification of the discrete space $\{x_0,x_1,\dots\}\cup\{y\}$. Define the partial order on $X$ as shown in the figure (that is, $x_\infty < y$ is the only nontrivial relation). 

\begin{center}
    \begin{figure}[ht]
\begin{tikzpicture}[scale=.75]
 \node  (0) at (0,1) {$\bullet$};
         \node  (0_) at (0,0.5) {$\scriptstyle{x_0}$}; 
\node  (1) at (1,1) {$\bullet$};\node  (1_) at (1,0.5) {$\scriptstyle{x_1}$}; 
\node  (2) at (2,1) {$\bullet$};\node  (2_) at (2,0.5) {$\scriptstyle{x_2}$}; 
\node  (inf) at (7,1) {$\bullet$};\node  (inf_) at (7,0.5) {$\scriptstyle{x_\infty}$};
\node (X) at (7,3) {$\bullet$};\node (X_) at (7.5,3) {$\scriptstyle{y}$};
\draw[dotted] (2)-- (inf);
\draw  (inf)-- (X);
\end{tikzpicture}
\caption{A Priestley space $X$ with ${\sf BL} (X)$ subfit but ${\sf L} (X)$ not.}
        \label{Figure 1}
    \end{figure}
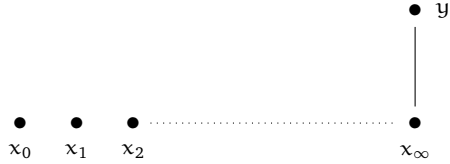
\end{center}   

It is straightforward to check that $X$ equipped with this partial order is a Priestley space. Let $A={\sf L}(X)$. By Priestley duality, we identify $X$ with the Priestley space of $A$. Since $\min X=\{x_0,x_1,\dots\}\cup\{x_\infty\}$ is not dense in $X$, by \cref{prop: char of subfit} $A$ is not subfit. 
We show that $\BL A$ is subfit.
By \cref{lem: char of M and Dinfty}(2), we identify $\BL A$ with ${\sf BL}(X)$.
Let $U,V\in {\sf BL}(X)$ with $U\not\subseteq V$. We have three cases to consider:

First suppose there is $x_n \in U\setminus V$. Then $W:=X\setminus\{x_n\}$ is a clopen upset, hence an element of ${\sf BL}(X)$, such that $U\cup W=X$ and $V\cup W\ne X$. Since the join in ${\sf BL}(X)$ is given by $D\vee E={\sf int_1}{\sf cl}(D\cup E)$ (see \cref{fact: Priestley 5}) and $x_n$ is isolated, we obtain $U\vee W=X$ and $V\vee W\ne X$. 

Next suppose $y \in U \setminus V$. Let $W=\{x_0,x_1,\dots\}$. 
Then $W\in {\sf BL}(X)$. Because $y\in U$, we have $U\cup W$ is dense in $X$, so $U\vee W=X$. On the other hand, $y\notin V$ implies $x_\infty\notin V$, so $V\subseteq W$, and hence $V\vee W=W\ne X$. 

Finally, let $x_\infty \in U\setminus V$. Then $y\in U$, so if $y\notin V$, we are in the previous case. Suppose $y\in V$. If $V$ contains a cofinite subset of $\{x_0,x_1,\dots\}$, then $x_\infty\in{\sf int_1}{\sf cl}(V)=V$, a contradiction. So $V\cap\{x_0,x_1,\dots\}$ is finite. On the other hand, $U$ must contain a cofinite subset of $\{x_0,x_1,\dots\}$ because, by \cref{fact: Priestley 2}, there is $W\in{\sf L}(X)$ such that $x_\infty \in W \subseteq U$. 
Since $W$ is clopen containing $x_\infty$, which is the limit of $\{ x_0,x_1,\dots\}$, $W$ must contain a cofinite subset of $\{ x_0,x_1,\dots\}$. 
Thus, there must exist $x_n \in U \setminus V$, and we are in the first case, completing the proof that ${\sf BL}(X)$ is subfit.
\end{proof}
Putting \cref{thm: main,thm: Fred Q1} together yields:
\begin{corollary} If $A \in \DLat$ is subfit, then so is $\pH A$, but not conversely.
\end{corollary}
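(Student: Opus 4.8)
The plan is to derive this corollary directly by combining the two theorems already in hand, exactly as the sentence ``Putting \cref{thm: main,thm: Fred Q1} together'' advertises. The statement has two halves: (i) if $A$ is subfit then $\pH A$ is subfit, and (ii) the converse fails.

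For half (i), I would argue as follows. Suppose $A\in\DLat$ is subfit. By \cref{thm: Fred Q1 a}, $\BL A$ is subfit. But by \cref{thm: main}, $\BL A$ is subfit if and only if $\pH A$ is subfit. Hence $\pH A$ is subfit. (One can equivalently short-circuit this: by \cref{prop: subfit implies proH} a subfit $A$ is proHeyting, so $a\to b$ need not exist in $A$ in general, but in any case $\M A\cong\BL A\cong\M(\pH A)$ by \cref{thm: MA frame} and \cref{prop: M of pH A}, and subfitness transfers along the Dedekind--MacNeille completion by \cref{thm: MA subfit implies A subfit}; either route works, but the two-line version using \cref{thm: Fred Q1 a} and \cref{thm: main} is cleanest.)

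For half (ii), I would reuse the counterexample constructed in the proof of \cref{thm: Fred Q1 b}: the one-point compactification $X$ of the discrete space $\{x_0,x_1,\dots\}\cup\{y\}$ with the single nontrivial order relation $x_\infty<y$, and $A={\sf L}(X)$. It was shown there that $A$ is not subfit but $\BL A\cong{\sf BL}(X)$ is subfit. Applying \cref{thm: main} once more, $\BL A$ subfit gives $\pH A$ subfit. So this same $A$ witnesses that $\pH A$ can be subfit while $A$ is not, proving the ``not conversely'' clause.

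There is essentially no obstacle here: both ingredients are already proved, and the only thing to check is that invoking \cref{thm: main} is legitimate in both directions, which it is since it is an ``if and only if.'' I would therefore present the proof in three or four lines, citing \cref{thm: Fred Q1 a}, \cref{thm: main}, and (for the converse) the Priestley space $X$ and lattice $A={\sf L}(X)$ from the proof of \cref{thm: Fred Q1 b} together with one more application of \cref{thm: main}. If anything warrants a sentence of care, it is simply reminding the reader that the $A$ from \cref{thm: Fred Q1 b} is not subfit (because $\min X$ is not dense in $X$, via \cref{prop: char of subfit}) while $\BL A$, hence $\pH A$, is.
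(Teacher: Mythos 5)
Your proof is correct and takes essentially the same route as the paper, which derives the corollary by simply combining \cref{thm: Fred Q1} with \cref{thm: main}: subfitness of $A$ passes to $\BL A$ by \cref{thm: Fred Q1 a} and hence to $\pH A$ by \cref{thm: main}, while the counterexample from the proof of \cref{thm: Fred Q1 b}, pushed through \cref{thm: main} once more, refutes the converse. Nothing further is needed.
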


To obtain an analogue of \cref{thm: MA subfit implies A subfit} for $\BL A$ (equivalently for $\pH A$), we need the additional requirement that $A$ is proHeyting:

 \begin{proposition}
    Let $A\in\DLat$ be proHeyting. Then
      $\BL A$ is subfit iff $A$ is subfit.        
\end{proposition}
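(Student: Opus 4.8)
The plan is to collapse the question to Janowitz's result for the Dedekind--MacNeille completion using the proHeyting hypothesis. Since $A$ is proHeyting, \cref{thm: MA frame} applies and gives that the canonical embedding $\M A\hookrightarrow\BL A$ is an isomorphism, i.e.\ $\M A\cong\BL A$. On the other hand, \cref{thm: MA subfit implies A subfit} states that $\M A$ is subfit iff $A$ is subfit. Chaining the two, $\BL A$ is subfit $\iff$ $\M A$ is subfit $\iff$ $A$ is subfit, which is exactly the assertion. So the whole proof is a two-line invocation of \cref{thm: MA frame} followed by \cref{thm: MA subfit implies A subfit}.

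It is worth separating the two implications to see where the hypothesis does work. The forward direction, $A$ subfit $\Rightarrow$ $\BL A$ subfit, is just \cref{thm: Fred Q1 a} and does not use that $A$ is proHeyting at all, since subfitness already forces proHeyting by \cref{prop: subfit implies proH}. The converse, $\BL A$ subfit $\Rightarrow$ $A$ subfit, is precisely where proHeyting is essential: in general $\BL A$ is strictly larger than $\M A$ and the converse genuinely fails (witnessed by \cref{thm: Fred Q1 b}); but when $A$ is proHeyting we have $\BL A\cong\M A$, and then $\M A$ subfit implies $A$ subfit, again by \cref{thm: MA subfit implies A subfit}. One could equivalently route through \cref{thm: main} (so $\BL A$ subfit iff $\pH A$ subfit) together with the observation that $A$ is both join- and meet-dense in $\pH A$ when $A$ is proHeyting, and then apply \cref{thm: MA subfit implies A subfit} to $A\subseteq\pH A$; but the direct route above is shorter and avoids introducing $\pH A$.

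There is essentially no obstacle here: the statement is a packaging corollary, and the only point requiring a moment's thought is recognizing that ``$A$ proHeyting'' is exactly the hypothesis that makes $\BL A$ indistinguishable from $\M A$, so that Janowitz's characterization transfers verbatim.
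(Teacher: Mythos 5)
Your proof is correct and follows essentially the same route as the paper: the paper also handles the reverse implication via \cref{thm: Fred Q1 a} and the forward implication by using proHeyting to get $\M A\cong\BL A$ (via \cref{thm: MA frame 1}) and then invoking \cref{thm: MA subfit implies A subfit}. Your additional commentary on where the proHeyting hypothesis is actually needed is accurate but not required.
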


\begin{proof}
    The reverse implication follows from \cref{thm: Fred Q1 a}.
  For the forward implication, since $A$ is proHeyting, \cref{thm: MA frame 1} implies that $\BL A\cong\M A$, thus it is enough to apply \cref{thm: MA subfit implies A subfit}.
\end{proof}

We next consider subfitness for the ideal completion.
As we pointed out in \cref{rem: ideals = open upsets 2}, \[I\mapsto\s(I) = \bigcup\{ \s(a) : a \in I \}\] is an isomorphism of $\I(A)$ and ${\sf OpUp}(X)$, where $X$ is the Priestley space of $A$. Similarly, \[F\mapsto\s(F) = \bigcap\{\s(a) : a \in F\}\] is an isomorphism between the filters of $A$ and the closed upsets of $X$ (ordered by $\supseteq$). The following lemma belongs to folklore. 

\begin{lemma} \label{lem: filter-ideal}
    Let $A\in\DLat$, $I$ be an ideal, and $F$ a filter of $A$. Then $I\cap F\ne\varnothing$ iff $\s(F)\subseteq\s(I)$.
\end{lemma}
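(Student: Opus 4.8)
\textbf{Proof proposal for \cref{lem: filter-ideal}.}

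The plan is to translate both sides of the equivalence into the language of Priestley spaces via the isomorphisms already recorded in \cref{rem: ideals = open upsets 2} and in the discussion preceding the lemma, and then observe that the resulting statement is a triviality about upsets and the Stone map. First I would unpack the two descriptions: $\s(I)=\bigcup\{\s(a):a\in I\}$ is the open upset corresponding to $I$, and $\s(F)=\bigcap\{\s(a):a\in F\}$ is the closed upset corresponding to $F$. Recall that for $a\in A$ we have $\s(a)=\{x\in X: a\in x\}$, a clopen upset of $X$.

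For the forward direction, suppose $I\cap F\ne\varnothing$, say $a\in I\cap F$. Then $\s(a)\subseteq\s(I)$ since $a\in I$, and $\s(F)\subseteq\s(a)$ since $a\in F$; composing gives $\s(F)\subseteq\s(I)$. For the converse, suppose $\s(F)\subseteq\s(I)$. Here is where the one genuine step occurs: $\s(F)$ is a \emph{closed} subset of $X$ and $\s(I)$ is an \emph{open} cover of it by the clopen upsets $\s(a)$, $a\in I$, so by compactness of $X$ there are finitely many $a_1,\dots,a_n\in I$ with $\s(F)\subseteq\s(a_1)\cup\dots\cup\s(a_n)=\s(a_1\vee\dots\vee a_n)$. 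Setting $a=a_1\vee\dots\vee a_n$, we have $a\in I$ (as $I$ is an ideal) and $\s(F)\subseteq\s(a)$. The latter inclusion, read back through the Stone map, says exactly that $a\in F$: indeed $\s(F)=\bigcap_{b\in F}\s(b)\subseteq\s(a)$ means that every prime filter containing $F$ contains $a$, which by the prime filter characterization of the order on $A$ (equivalently, by $\s$ being an order-embedding and $F=\bigvee\{{\uparrow}b: b\in F\}$ in the filter lattice) forces $a\in F$. Hence $a\in I\cap F$, so $I\cap F\ne\varnothing$.

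The only point requiring care — and the ``main obstacle,'' such as it is — is the last implication $\s(F)\subseteq\s(a)\Rightarrow a\in F$, since $F$ need not be principal; one must genuinely use that a prime filter $x$ contains $a$ iff $x\in\s(a)$ together with the fact that $F$ is the intersection of the prime filters containing it (a standard consequence of the prime filter theorem, already invoked implicitly throughout the Priestley-duality setup). Alternatively, and perhaps more cleanly, one can argue purely order-theoretically: if $a\notin F$ then by the prime filter theorem there is a prime filter $x\supseteq F$ with $a\notin x$, i.e.\ $x\in\s(F)\setminus\s(a)$, contradicting $\s(F)\subseteq\s(a)$. Either way the argument is short, and the compactness step is the one substantive ingredient. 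I would present the compactness argument explicitly and relegate the prime-filter step to a one-line remark, consistent with the lemma being folklore.
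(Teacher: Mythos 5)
Your proposal is correct and follows essentially the same route as the paper: both directions rest on the observation that $a\in I\cap F$ gives $\s(F)\subseteq\s(a)\subseteq\s(I)$, and the converse uses compactness of the closed set $\s(F)$ against the open cover $\{\s(a):a\in I\}$ to produce a single $a\in I\cap F$. The paper states the compactness step in one line; you have merely filled in the details (the finite join and the prime-filter argument for $\s(F)\subseteq\s(a)\Rightarrow a\in F$), all of which check out.
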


\begin{proof}
    If $a\in F\cap I$, then $\s(F)\subseteq\s(a)\subseteq\s(I)$. Conversely, since $X$ is compact, from $\s(F)\subseteq\s(I)$ it follows that there is $a\in A$ with $\s(F)\subseteq\s(a)\subseteq\s(I)$. Thus, $a\in F\cap I$.
\end{proof}
For the next theorem, we utilize the Skula topology \cite{skula_reflective_1969} --- a convenient tool in the study of topological spaces. We recall that the {\em Skula topology} of a topological space $X$ is the topology generated by the basis 
\[
\{ U \setminus V : U,V \mbox{ open in } X \}.
\]

\begin{theorem} \label{I-subfit}
Let $A\in\DLat$ and $X$ be the Priestley space of $A$. The following are equivalent:
    \begin{enumerate}[label=\upshape(\arabic*), ref = \theproposition(\arabic*)]
        \item $\I A$ is subfit.
         \item For any $a\not \in J \in \I A$, there is a maximal ideal $M$ with $J \subseteq M$ and $a\notin M$. \label[proposition]{I-subfit 2}
        \item Every ideal is an intersection of maximal ideals. \label[proposition]{I-subfit 3}
        \item For any $a\not \in J \in \I A$, there is a minimal prime filter $P$ such that $a \in P$ and $P \cap J = \varnothing$. 
        \item $\min X$ is dense in the Skula topology of ${\sf OpUp}(X)$. 
        \item $x\in{\sf cl}(\min {\downarrow} x)$ for each $x \in X$. \label[proposition]{I-subfit 6}
    \end{enumerate}
\end{theorem}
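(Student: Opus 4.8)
The plan is to run the cycle $(1)\Rightarrow(2)\Rightarrow(3)\Rightarrow(4)\Rightarrow(5)\Rightarrow(6)\Rightarrow(1)$, using two translations throughout. First, by \cref{lem: char of M and Dinfty-1} the assignment $I\mapsto\s(I)$ is an isomorphism $\I A\cong{\sf OpUp}(X)$ in which joins are unions, so subfitness of $\I A$ is the same as subfitness of the frame of open upsets of $X$. Second --- using that maximal ideals of a distributive lattice are prime, together with the Prime Ideal Theorem --- the map $M\mapsto A\setminus M$ is a bijection between the maximal ideals of $A$ and the minimal prime filters of $A$, which are precisely the minimal points of $X=\mathcal{S}(A)$; under it ``$J\subseteq M$ and $a\notin M$'' corresponds to ``$P\cap J=\varnothing$ and $a\in P$.'' Given these, $(2)\Leftrightarrow(3)$ is the standard fact that $J$ is the intersection of the maximal ideals containing it exactly when those maximal ideals separate the points not in $J$, and $(3)\Leftrightarrow(4)$ is the dictionary just described; so the implications needing genuine work are $(1)\Rightarrow(2)$ and $(4)\Rightarrow(5)\Rightarrow(6)\Rightarrow(1)$.

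The implication $(1)\Rightarrow(2)$ is the step I expect to require the most care. Given $a\notin J\in\I A$, we have ${\downarrow}a\not\le J$ in $\I A$, so subfitness yields an ideal $K$ with ${\downarrow}a\vee K=A$ and $J\vee K\ne A$. One first checks $a\notin J\vee K$: if $a\le j\vee k$ with $j\in J$, $k\in K$ and $k'\in K$ witnesses $a\vee k'=1$, then $1=j\vee(k\vee k')\in J\vee K$, a contradiction. Replacing $J$ by $J\vee K$ we may thus assume $K\subseteq J$, $a\notin J$, and ${\downarrow}a\vee J=A$. Now Zorn's lemma gives an ideal $M\supseteq J$ maximal among ideals not containing $a$, and the crux is that $M$ is then an \emph{actual} maximal ideal of $A$: if $M\subsetneq M'\subsetneq A$ for some ideal $M'$, then $a\in M'$ by maximality of $M$, whence $A={\downarrow}a\vee J\subseteq{\downarrow}a\vee M\subseteq M'$, a contradiction. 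So $M$ is maximal with $J\subseteq M$ and $a\notin M$, which is $(2)$. (The reverse $(2)\Rightarrow(1)$ is easy: for $I\not\subseteq J$ pick $a\in I\setminus J$ and the maximal ideal $M\supseteq J$ with $a\notin M$ from $(2)$; then $M$ itself witnesses subfitness, since $J\vee M=M\ne A$ while $I\vee M\supseteq{\downarrow}a\vee M=A$.)

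For $(4)\Rightarrow(5)$ I pass to $X$. A nonempty basic Skula-open of ${\sf OpUp}(X)$ has the form $U\setminus V$ with $U,V$ open upsets; pick $x\in U\setminus V$. By \cref{fact: Priestley 2}, $U$ is a union of clopen upsets, so there is $a\in A$ with $x\in\s(a)\subseteq U$; write $V=\s(J)$ for the corresponding ideal $J$. Since $x\in\s(a)\setminus\s(J)$ we get $\s(a)\not\subseteq\s(J)$, hence $a\notin J$; then $(4)$ provides a minimal prime filter $P$ with $a\in P$ and $P\cap J=\varnothing$, that is, a point of $\min X$ lying in $\s(a)\subseteq U$ but outside $V$. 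So $\min X$ meets $U\setminus V$, and since such sets form a basis of the Skula topology, $\min X$ is dense there, which is $(5)$. For $(5)\Rightarrow(6)$, fix $x$ and a basic Stone-neighbourhood $U_0\setminus V_0$ with $U_0,V_0$ clopen upsets (\cref{fact: Priestley 1}); since $X={\uparrow}\min X$ we have $\min{\downarrow}x=\min X\cap{\downarrow}x$, and it suffices to meet this set. The key point is that ${\downarrow}x$ is Stone-closed (\cref{fact: Priestley 4}) and its complement is an \emph{open upset}, so $V_1:=V_0\cup(X\setminus{\downarrow}x)$ is an open upset and $U_0\setminus V_1$ is a nonempty basic Skula-open of ${\sf OpUp}(X)$ (it contains $x$); a point of $\min X$ in $U_0\setminus V_1$, supplied by $(5)$, lies in $U_0$, outside $V_0$, and below $x$, as required.

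Finally, for $(6)\Rightarrow(1)$ it suffices to show ${\sf OpUp}(X)$ is subfit. Let $U\not\subseteq V$ be open upsets and $x\in U\setminus V$; since $V$ is an upset, ${\downarrow}x\cap V=\varnothing$. As $U$ is Stone-open and contains $x$, $(6)$ gives $m\in U\cap\min X\cap{\downarrow}x$, and then $m\notin V$. Since $m$ is minimal, $\{m\}={\downarrow}m$ is a closed downset, so $W:=X\setminus\{m\}$ is an open upset satisfying $U\cup W=X$ and $m\notin V\cup W$, hence $V\cup W\ne X$. Thus $W$ witnesses subfitness of ${\sf OpUp}(X)$, closing the cycle.
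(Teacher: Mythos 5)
Your proof is correct, and it differs from the paper's in two substantive ways. First, you establish $(1)\Rightarrow(2)$ by a purely lattice-theoretic argument: absorbing the subfitness witness $K$ into $J$ so that ${\downarrow}a\vee J=A$, then applying Zorn's lemma and observing that an ideal maximal with respect to omitting $a$ must be genuinely maximal because adjoining $a$ already generates all of $A$. The paper instead proves $(1)\Rightarrow(4)$ on the dual side (extracting a minimal point of $X$ outside $\s(J)\cup\s(I)$) and recovers $(2)$ from $(4)$ through the maximal-ideal/minimal-prime-filter dictionary; your route keeps this implication free of Priestley duality, at the cost of an explicit Zorn argument (harmless, since the theorem is already choice-dependent). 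Second, you integrate condition $(6)$ into the cycle via $(5)\Rightarrow(6)\Rightarrow(1)$: for $(5)\Rightarrow(6)$ the key observation that $X\setminus{\downarrow}x$ is an open upset, so that $U_0\setminus\bigl(V_0\cup(X\setminus{\downarrow}x)\bigr)$ is a nonempty basic Skula-open forcing a minimal point below $x$ into the given Stone neighbourhood, is a nice geometric trick; and your $(6)\Rightarrow(1)$ uses the same witness $W=X\setminus\{m\}$ as the paper's $(5)\Rightarrow(1)$. The paper instead proves $(3)\Leftrightarrow(6)$ separately by an algebraic computation with the dual descriptions of prime and maximal ideals ($\s(I)=X\setminus{\downarrow}x$, resp.\ $X\setminus\{x\}$ with $x$ minimal) and the identity ${\sf cl_1}={\downarrow}\,{\sf cl}$. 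Both decompositions are sound; yours is a single clean cycle with a more topological handling of $(6)$, while the paper's leans more heavily on the already-established duality formulas.
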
 

\begin{proof}
  $(1) \Rightarrow (4)$: Suppose $a \notin J$, so ${\downarrow} a \not\subseteq J$. Since $\I A$ is subfit, there is $I \in \I A$ such that ${\downarrow} a \vee I = A$ but $J \vee I \ne A$. Therefore, by \cref{rem: ideals = open upsets 2}, $\s(a) \cup \s(I) = X$ but $\s(J) \cup \s(I) \ne X$. The latter equality delivers $P \in \min X$ such that $P \notin \s(J) \cup \s(I)$. Since $P \in \s(a) \cup \s(I)$ and $P \notin \s(I)$, we must have $P \in \s(a)$, so $a\in P$. Also, because $P\notin \s(J)$, \cref{lem: filter-ideal} implies that $P\cap J=\varnothing$. Thus, (4) holds. 

$(2) \Leftrightarrow (3)$: This is obvious.

$(2) \Leftrightarrow (4)$: It is well known that prime ideals are exactly the complements of prime filters (see, e.g., \cite[p.~68]{balbes_distributive_1974}) and, under that correspondence, maximal ideals are the complements of minimal prime filters. Thus, for the forward implication put $P= A \setminus M$, and for the reverse implication put $M=A\setminus P$.

$(4) \Rightarrow (5)$: Let $U,V\in{\sf OpUp}(X)$ and $U\setminus V\ne\varnothing$. By \cref{fact: Priestley 2}, there is a clopen upset $W\subseteq U$ such that $W\not\subseteq V$. Since $W=\s(a)$ and $V=\s(J)$ for some $a \in A$ and $J \in \I A$, we have $a\notin J$. So by (4) there is a minimal prime filter $P$ such that $a\in P$ and $P\cap J=\varnothing$. Therefore, $P\in\min X\cap W$ and $P\notin V$ (by \cref{lem: filter-ideal}). Thus, $\min X\cap(U\setminus V)\ne\varnothing$, and hence $\min X$ is dense in the Skula topology of ${\sf OpUp}(X)$.

$(5) \Rightarrow (1)$: Suppose $I \not\subseteq J$ in $\I A$. Then there is $a\in I\setminus J$. Therefore, $\s(a)\not\subseteq\s(J)$, so $\s(a)\setminus\s(J)\ne\varnothing$. By (5), there is a minimal prime filter $P$ such that $P\in\s(a)\setminus\s(J)$. 
Let $W = X \setminus \{ P \}$. Then $W$ is an open upset, $\s(I) \cup W = X$ (because $P \in \s(I)$), and $\s(J) \cup W \ne X$ (because $P \notin \s(J)$ and $P \notin W$). Let $H = \{ a \in A : \s(a)\subseteq W \}$ be the ideal of $A$ corresponding to $W$. By \cref{rem: ideals = open upsets 2}, $I \vee H = A$ but $J \vee H \ne A$, and hence $\I A$ is subfit.

$(3) \Leftrightarrow (6)$: Since every ideal is an intersection of prime ideals, (3) is equivalent to every prime ideal being the intersection of maximal ideals containing it. Recall (see, e.g., \cite[p.~54]{priestley_ordered_1984} or \cite[p.~385]{bezhanishvili_bitopological_2010}) that an ideal $I$ is prime iff $\s(I)=X\setminus{\downarrow}x$ for some $x\in X$ and $I$ is maximal iff $\s(I)=X\setminus\{x\}$ for some $x\in\min X$. Since the meet in ${\sf OpUp}(X)$ is given by ${\sf int_1}\bigcap$, we see that the above is equivalent to $X\setminus{\downarrow}x = {\sf int_1}\bigcap \{ X\setminus\{y\}: y\in\min X \mbox{ and } y\le x\}$ for each $x\in X$. This, in turn, is equivalent to ${\downarrow}x = {\sf cl_1}(\min{\downarrow}x)$ for each $x\in X$. Finally, since ${\sf cl_1}={\downarrow} \, {\sf cl}$ and ${\sf cl}\min{\downarrow}x \subseteq {\downarrow}x$, the last condition is equivalent to (6). 
\end{proof}

\begin{proposition} \label{prop: subfitness A vs IA}
Let $A\in\DLat$.
\begin{enumerate}[label=\upshape(\arabic*), ref = \theproposition(\arabic*)]
    \item $\I A$ subfit implies $A$ subfit. \label[proposition]{prop: subfitness A vs IA 2}
    \item $A$ subfit does not imply $\I A$ subfit. \label[proposition]{prop: subfitness A vs IA 1}
   \end{enumerate}
\end{proposition}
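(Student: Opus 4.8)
The plan is to prove (1) directly from the characterizations already established, and to prove (2) by exhibiting an explicit Priestley space witnessing the gap, reusing as much of the analysis from \cref{thm: Fred Q1} as possible.

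For (1), I would argue at the level of Priestley duals. By \cref{prop: char of subfit}, it suffices to show that $\min X$ is dense in the Stone topology of $X$ whenever $\I A$ is subfit. But \cref{I-subfit} (in particular the equivalence with condition \cref{I-subfit 6}, or with condition (5) stating density of $\min X$ in the Skula topology of ${\sf OpUp}(X)$) gives us exactly such a density statement, and the Skula topology refines the topology of open upsets. The cleanest route: the Stone topology of $X$ is the join of the topology of open upsets and the topology of open downsets, so a basic Stone-open set has the form $U \setminus V$ with $U$ an open upset and $V$ an open downset; one then checks that $\min X$ dense in the Skula topology of ${\sf OpUp}(X)$ — equivalently, $x \in {\sf cl}(\min{\downarrow}x)$ for all $x$ — forces $\min X$ to meet $U \setminus V$. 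Indeed, pick $x \in U \setminus V$; since $x \notin V$ and $V$ is a downset, ${\downarrow}x \cap V = \varnothing$, so $\min{\downarrow}x \subseteq U \setminus V$, and since $U$ is open while $x \in {\sf cl}(\min{\downarrow}x) \subseteq {\sf cl}(U \setminus V)$... actually more simply: $\min{\downarrow}x$ is nonempty and contained in $U$ (an upset containing $x$) and disjoint from $V$, so $\min X \cap (U \setminus V) \neq \varnothing$ directly. Hence $\min X$ is dense in $X$, so $A$ is subfit. Alternatively, and even more simply, one can invoke \cref{thm: Janowitz}(1): $A$ is meet-dense in $\I A$ (every principal ideal is an ideal, and every ideal is the directed join, hence in particular an intersection of no larger... no — one uses that $A \hookrightarrow \I A$ is meet-dense because $I = \bigcap\{{\downarrow}a : I \subseteq {\downarrow}a\}$ fails in general, so the Priestley argument is the safe one). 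I would present the Priestley-topological argument as the main proof.

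For (2), I would take the space $X$ from \cref{Figure 1} — the one-point compactification of $\{x_0, x_1, \dots\} \cup \{y\}$ with the single nontrivial relation $x_\infty < y$ — and set $A = {\sf L}(X)$. We already proved there that $A$ is \emph{not} subfit because $\min X$ is not dense, so that space will not serve for (2); instead I need a space where $A$ \emph{is} subfit but $\I A$ is not. The natural candidate is a Priestley space $X$ in which $\min X$ is dense in the Stone topology but fails condition \cref{I-subfit 6}, i.e. there is a point $x$ with $x \notin {\sf cl}(\min{\downarrow}x)$. A clean example: let $X = \{x_0, x_1, \dots\} \cup \{x_\infty\} \cup \{z\}$ be the one-point compactification of the discrete set $\{x_0, x_1,\dots\} \cup \{z\}$ with $x_\infty$ the limit of the $x_n$, and impose the order $z < x_\infty$ (so $z$ is below the limit point rather than above it), with the $x_n$ all minimal and incomparable. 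Then $\min X = \{x_0, x_1, \dots\} \cup \{z\}$, which \emph{is} dense in $X$ (its closure contains $x_\infty$ as the limit of the $x_n$), so $A = {\sf L}(X)$ is subfit by \cref{prop: char of subfit}. But ${\downarrow}x_\infty = \{x_\infty, z\}$, so $\min{\downarrow}x_\infty = \{z\}$, whose closure is just $\{z\}$ (since $z$ is isolated), which does not contain $x_\infty$; thus condition \cref{I-subfit 6} fails and $\I A$ is not subfit. I would need to verify that this $X$ is genuinely a Priestley space (compactness is clear from one-point compactification; the Priestley separation axiom needs the single nontrivial comparability $z < x_\infty$ to be witnessed by a clopen upset containing $x_\infty$ but not $z$ — and $X \setminus \{z\}$, or $\{x_\infty\} \cup \{x_n : n \geq N\}$ for suitable handling, works) and to double-check which points are actually below $x_\infty$.

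The main obstacle is constructing and validating the counterexample in (2): one must choose the order so that $\min X$ remains topologically dense (to keep $A$ subfit) while some non-minimal point $x$ has its down-set meeting $\min X$ only in a topologically small set (to break subfitness of $\I A$ via \cref{I-subfit 6}), and then carefully check the Priestley separation axiom. Part (1) is routine given \cref{I-subfit}. I would also remark that, since $A$ Boolean implies $\I A$ Boolean is false in general but $\I A$ subfit is strictly between subfitness of $A$ and Booleanness of $A$ (as announced in the introduction), the example should not be a Boolean algebra — and indeed ${\sf L}(X)$ for the proposed $X$ is not, since $X$ is not an antichain.
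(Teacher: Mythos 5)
Your counterexample for (2) is correct and genuinely different from the paper's: the paper uses the two-point compactification of $\{x_n\}\cup\{y_n\}$ with $x_\infty<y_\infty$ the only nontrivial relation (so \cref{I-subfit 6} fails at $y_\infty$, whose only minimal predecessor is $x_\infty$), whereas you place a single isolated point $z$ below the limit $x_\infty$ of a convergent sequence of minimal isolated points. Your order is closed in $X^2$, the Priestley separation axiom checks out (e.g.\ $\{x_\infty\}\cup\{x_n:n\ge N\}$ and $X\setminus\{x_n\}$ are clopen upsets), $\min X=\{x_n:n\in\N\}\cup\{z\}$ is Stone-dense so $A$ is subfit, and ${\sf cl}(\min{\downarrow}x_\infty)=\{z\}\not\ni x_\infty$ kills \cref{I-subfit 6}. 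This is arguably the more economical example of the two.

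For (1), the paper's proof is a one-liner: \cref{I-subfit 3} (every ideal is an intersection of maximal ideals) trivially implies \cref{prop: char of subfit 2a} (every principal ideal is), with no topology involved. Your topological route reaches the right conclusion, but the written argument contains false intermediate claims that must be repaired. First, the basic Stone opens are the sets $U\setminus V$ with $U,V$ clopen \emph{upsets} (\cref{fact: Priestley 1}); a difference of an open upset and an open downset need not be open. Second, for a \emph{downset} $V$, $x\notin V$ does not give ${\downarrow}x\cap V=\varnothing$ (that implication is the one valid for upsets). Third, and most seriously, $\min{\downarrow}x$ is \emph{not} contained in an upset $U$ merely because $x\in U$: upsets containing $x$ contain ${\uparrow}x$, not ${\downarrow}x$, so the claim $\min{\downarrow}x\subseteq U\setminus V$ is simply wrong. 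None of this is needed: \cref{I-subfit 6} gives $x\in{\sf cl}(\min{\downarrow}x)\subseteq{\sf cl}(\min X)$ for every $x$, so $\min X$ is dense outright and \cref{prop: char of subfit 3} finishes. (You are right to discard \cref{thm: Janowitz}(1): $A$ is not meet-dense in $\I A$ in general.)
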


\begin{proof}
(1) This is immediate from \cref{I-subfit 3,prop: char of subfit 2a}. 

(2) Let $X$ be the space shown in \cref{Fig 2}, where $\{x_0,x_1,\dots\}\cup\{y_0,y_1,\dots\}$ is the set of isolated points of $X$, $x_\infty$ is the limit of $\{x_0,x_1,\dots\}$ and $y_\infty$ is the limit of $\{y_0,y_1,\dots\}$ (that is, $X$ is the two-point compactification of the discrete space $\{x_0,x_1,\dots\}\cup\{y_0,y_1,\dots\}$). Clearly $X$ is a Stone space. Let the partial order on $X$ be as depicted in \cref{Fig 2} (that is, the only nontrivial relation is $x_\infty < y_\infty$). 

     \begin{center} 
    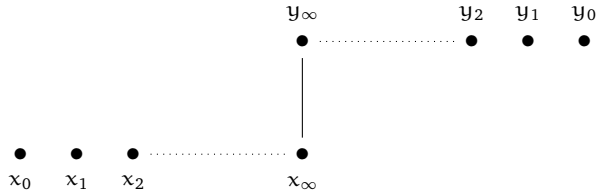
\begin{figure}[hbt!]
\begin{tikzpicture}[scale=.75]
 \node  (x0) at (0,1) {$\bullet$};
         \node  (x0_) at (0,0.5) {$\scriptstyle{x_0}$}; 
\node  (x1) at (1,1) {$\bullet$};\node  (x1_) at (1,0.5) {$\scriptstyle{x_1}$}; 
\node  (x2) at (2,1) {$\bullet$};\node  (x2_) at (2,0.5) {$\scriptstyle{x_2}$}; 
\node  (xinf) at (5,1) {$\bullet$};\node  (xinf_) at (5,0.5) {$\scriptstyle{x_\infty}$}; 
\node  (yinf) at (5,3) {$\bullet$};\node  (yinf_) at (5,3.5) {$\scriptstyle{y_\infty}$};
\node  (y2) at (8,3) {$\bullet$};\node  (y2_) at (8,3.5) {$\scriptstyle{y_2}$};
\node  (y1) at (9,3) {$\bullet$};\node  (y1_) at (9,3.5) {$\scriptstyle{y_1}$};
\node  (y0) at (10,3) {$\bullet$};\node  (y0_) at (10,3.5) {$\scriptstyle{y_0}$};
\draw[dotted] (x2) --  (xinf);
\draw[dotted] (yinf) --  (y2);
\draw  (xinf)-- (yinf);
\end{tikzpicture}
\caption{A Priestley space $X$ with ${\sf L}(X)$ subfit but ${\sf OpUp}(X)$ not.}
        \label{Fig 2}
    \end{figure}
\end{center}
It is straightforward to check that $X$ equipped with this partial order is a Priestley space. Let $A = {\sf L}(X)$; as before, identify $X$ with the Priestley space of $A$. It is clear that 
\[\min X=\{x_0,x_1,\dots\}\cup\{x_\infty\}\cup\{y_0,y_1,\dots\}\]
is dense in $X$. Thus, $A$ is subfit by \cref{prop: char of subfit}. On the other hand, $\min({\downarrow}y_\infty)=\{x_\infty\}$, so ${\sf cl}(\min{\downarrow}y_\infty)=\{x_\infty\}$, and hence $y_\infty\notin{\sf cl}(\min{\downarrow}y_\infty)$. Consequently, \cref{I-subfit 6} is not satisfied, so $\I A$ is not subfit.
\end{proof}

Finally, we consider subfitness for the canonical completion.

\begin{proposition} \label{prop: canonical subfit iff boolean}
    For $A\in{\DLat}$, $A\!^\sigma$ is subfit iff $A$ is Boolean.
\end{proposition}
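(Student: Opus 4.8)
The plan is to work through the Priestley representation $A\!^\sigma \cong {\sf Up}(X)$ of \cref{lem: char of M and Dinfty-0}, where $X$ is the Priestley space of $A$. The first thing I would record is the standard fact that $A$ is Boolean precisely when the order on $X$ is trivial, i.e.\ $X$ is an antichain: Stone duality is the restriction of Priestley duality, so if $A$ is Boolean its prime filters are ultrafilters, which form an antichain under inclusion, while conversely if $X$ is an antichain then ${\sf L}(X)$ is just the Boolean algebra of all clopen subsets of the Stone space $X$. This reduces the proposition to showing that ${\sf Up}(X)$ is subfit if and only if $X$ is an antichain.

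For the ``if'' direction: when $X$ is an antichain every subset is an upset, so ${\sf Up}(X) = \wp(X)$ is a (complete) Boolean algebra, and every Boolean algebra is subfit — given $a \not\le b$ take $c = \neg a$, so that $a \vee c = 1$ while $b \vee c \ne 1$, since $b \vee \neg a = 1 \iff a \le b$. Hence $A\!^\sigma$ is subfit. (Alternatively one may simply invoke that the canonical extension of a Boolean algebra is a complete Boolean algebra.)

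For the ``only if'' direction I would argue by contraposition, producing an explicit failure of subfitness in ${\sf Up}(X)$ when $X$ is not an antichain. Pick $x, y \in X$ with $x < y$ and $x \ne y$, and set $U = {\uparrow}y$ and $V = {\uparrow}y \setminus \{y\}$. Both are upsets (for $V$: if $z \in V$ and $z \le z'$ then $z' \ge y$, and $z' = y$ would force $z = y$), and $U \not\le V$ since $y \in U \setminus V$. Suppose toward a contradiction that some $W \in {\sf Up}(X)$ satisfies $U \vee W = X$ and $V \vee W \ne X$. Choosing $z \notin V \cup W$, we get $z \in U = {\uparrow}y$ (as $z \in U \cup W$ and $z \notin W$), hence $z \in {\uparrow}y \setminus V = \{y\}$, so $z = y$ and in particular $y \notin W$. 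On the other hand $x < y$ gives $y \not\le x$, so $x \notin U$; since $X = U \cup W$ we get $x \in W$, and as $W$ is an upset with $x \le y$ this forces $y \in W$, contradicting $y \notin W$. Thus ${\sf Up}(X)$ is not subfit.

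Combining the two directions with \cref{lem: char of M and Dinfty-0} yields the proposition. The argument is short, so there is no serious obstacle; the only step requiring a bit of thought is choosing the witnesses to the failure of subfitness — the point being that for a point $y$ lying strictly above some point $x$, removing $y$ from its own up-set still yields an upset, and this single deletion cannot be repaired by any $W$ without $W$ being forced, via $x$, to contain $y$ again.
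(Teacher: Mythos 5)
Your proof is correct, and the overall skeleton matches the paper's: both reduce via $A\!^\sigma\cong{\sf Up}(X)$ to the statement that ${\sf Up}(X)$ is subfit iff the order on $X$ is trivial, and both handle the easy direction by observing that a trivial order makes ${\sf Up}(X)=\wp(X)$ a complete Boolean algebra. The difference lies in the hard direction: the paper simply cites \cite[Cor.~4.9]{erne_complete_2007} (that the frame of upsets of a poset is subfit only if the order is discrete) and then devotes a follow-up remark to discussing and generalizing that result, whereas you prove the needed instance from scratch with the explicit witness $U={\uparrow}y$, $V={\uparrow}y\setminus\{y\}$ and the observation that any upset $W$ covering $X\setminus U$ must contain $x$ and hence $y$, killing every candidate $W$. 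Your argument checks out in every detail (in particular $V$ is indeed an upset by antisymmetry, and joins in ${\sf Up}(X)$ are unions, so the subfitness condition is exactly the set-theoretic one you use). What your route buys is self-containment --- the reader need not chase the Ern\'e--Gehrke--Pultr reference --- at the cost of a slightly longer proof; what the paper's route buys is brevity plus a natural segue into the remark on Alexandroff spaces that follows the proposition.
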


\begin{proof}
    First suppose $A$ is Boolean. Then the order on the Priestley space $X$ of $A$ is trivial, so ${\sf Up}(X)=\mathcal P(X)$. By \cref{lem: char of M and Dinfty-0}, $A\!^\sigma$ is Boolean, hence subfit. 
   Conversely, suppose $A\!^\sigma$ is subfit, so ${\sf Up}(X)$ is subfit. Thus, the order on $X$ is trivial by \cite[Cor.~4.9]{erne_complete_2007}. Consequently, $A$ is Boolean (see, e.g., \cite[p.~119]{gratzer_lattice_2011}).  
    \end{proof}

\begin{remark}
    We recall that a space $X$ is {\em Alexandroff} if an arbitrary intersection of open sets is open, in which case the opens of $X$ are exactly the upsets of $X$ in the specialization preorder (defined by $x\le y$ iff $x\in{\sf cl}(y)$). An Alexandroff space is $T_0$ iff the specialization preorder is a partial order. Using this language, \cite[Cor.~4.9]{erne_complete_2007} states that the frame of opens of an Alexandroff $T_0$-space is subfit iff $X$ is discrete. This result generalizes to arbitrary Alexandroff spaces as follows: 
    
    Let $X$ be an Alexandroff space and $X_0$ its $T_0$-reflection (that is, $X_0$ is the quotient of $X$ by the equivalence relation $\sim$ defined by $x\sim y$ iff ${\sf cl}(x)={\sf cl}(y)$). Since the frames $\O(X)$ and $\O(X_0)$ of opens of $X$ and $X_0$ are isomorphic, we have that $\O(X)$ is subfit iff $\O(X_0)$ is subfit. Moreover, the specialization order on $X_0$ is discrete iff the specialization preorder on $X$ is an equivalence relation. Thus, \cite[Cor.~4.9]{erne_complete_2007} yields: $\O(X)$ is subfit iff the specialization preorder of $X$ is an equivalence relation.
\end{remark}
As a consequence of \cref{prop: canonical subfit iff boolean}, we obtain that subfitness is not preserved but is reflected by canonical completions:

\begin{corollary}
    \label{prop: subfitness A vs can}
Let $A\in\DLat$.
\begin{enumerate}[label=\upshape(\arabic*), ref = \theproposition(\arabic*)]
 \item $A\!^\sigma$ subfit implies $A$ subfit. \label[proposition]{prop: subfitness A vs can 2}
    \item $A$ subfit does not imply $A\!^\sigma$ subfit. \label[proposition]{prop: subfitness A vs can 1}
   
\end{enumerate}
\end{corollary}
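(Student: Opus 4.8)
The plan is to derive both parts directly from \cref{prop: canonical subfit iff boolean}, which already equates subfitness of $A\!^\sigma$ with Booleanness of $A$.

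For part (1), suppose $A\!^\sigma$ is subfit. By \cref{prop: canonical subfit iff boolean}, $A$ is Boolean. Every Boolean algebra is subfit: given $a \not\le b$, take $c = \neg a$, so that $a \vee c = 1$, while $b \vee c = b \vee \neg a \ne 1$ (since $b \vee \neg a = 1$ would force $a \le b$). Hence $A$ is subfit. Alternatively, one can avoid even invoking that Boolean algebras are subfit by a density argument: $A$ is Boolean forces the order on its Priestley space $X$ to be trivial, so $\min X = X$ is trivially dense, and \cref{prop: char of subfit 3} gives subfitness of $A$; but the direct Boolean computation is cleaner.

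For part (2), I would exhibit a subfit $A\in\DLat$ that is not Boolean, since then \cref{prop: canonical subfit iff boolean} immediately yields that $A\!^\sigma$ is not subfit. The simplest choice is a finite non-Boolean distributive lattice, for instance the three-element chain $\mathbf{3} = \{0, m, 1\}$. It is trivially subfit: if $a \not\le b$ then $b \ne 1$, and since $\mathbf{3}$ is finite one checks the two relevant cases ($a = m, b = 0$ and $a = 1, b \in \{0,m\}$) directly, always using $c = 1$. But $\mathbf{3}$ is not Boolean (it has no complement for $m$). By \cref{prop: canonical subfit iff boolean}, $\mathbf{3}^\sigma$ is not subfit. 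In fact, for a finite lattice $\mathbf{3}^\sigma = \mathbf{3}$ itself, and one sees directly that $\mathbf{3}$, while subfit, is not subfit \emph{as a completion issue only insofar as} it fails Booleanness --- but the point is simply that $A$ subfit does not entail $A^\sigma$ subfit.

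There is essentially no obstacle here: the content was all in \cref{prop: canonical subfit iff boolean} and in the characterization of subfitness via $\min X$. The only mild care needed is to pick the counterexample for part (2) transparently and to note explicitly that a finite non-Boolean distributive lattice is automatically subfit, which is the one computation worth spelling out.
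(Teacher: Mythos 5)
Part (1) of your proposal is fine and matches the paper's argument (apply \cref{prop: canonical subfit iff boolean} to get $A$ Boolean, then note Boolean implies subfit; your computation with $c=\neg a$ is correct).

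Part (2) contains a genuine error: the three-element chain $\mathbf{3}=\{0,m,1\}$ is \emph{not} subfit. Take $a=m$ and $b=0$, so $a\not\le b$. Subfitness demands some $c$ with $m\vee c=1$ and $0\vee c=c\ne 1$; but the only $c$ with $m\vee c=1$ is $c=1$ itself, which forces $b\vee c=1$. Your verification ``always using $c=1$'' cannot work for any pair, since $c=1$ always gives $b\vee c=1$, violating the requirement $b\vee c\ne 1$. More structurally, your claim that a finite non-Boolean distributive lattice is automatically subfit is exactly backwards: by \cref{prop: char of subfit 3}, subfitness of $A$ means $\min X$ is dense in the Priestley space $X$, and for finite $A$ the space $X$ is discrete, so density forces $\min X=X$, i.e., the order is trivial and $A$ is Boolean. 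Hence \emph{no} finite lattice can witness part (2); the counterexample must be infinite. The paper uses the lattice consisting of $\varnothing$ together with the cofinite subsets of $\N$: it is subfit (given $a\not\subseteq b$, pick $x\in a\setminus b$ and set $c=\N\setminus\{x\}$, so $a\vee c=\N$ but $b\vee c=c\ne\N$) and not Boolean (no nonempty cofinite set has a complement in this lattice), so \cref{prop: canonical subfit iff boolean} applies. Replacing your example with this one (or any infinite subfit non-Boolean lattice) repairs the proof; the surrounding logic of part (2) is otherwise sound.
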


\begin{proof} 
(1) If $A\!^\sigma$ is subfit, then $A$ is Boolean by \cref{prop: canonical subfit iff boolean}, and hence $A$ is subfit.

(2) Take any subfit non-Boolean $A$ (for example, the lattice consisting of $\varnothing$  and the cofinite subsets of $\N$) and apply \cref{prop: canonical subfit iff boolean}.
\end{proof}

The situation summarizes as follows:

\begin{summary} \label{sum: subfit}
    Let $A\in\DLat$.
    \begin{enumerate}[label=\upshape(\arabic*), ref = \thesummary(\arabic*)]
    \item $\BL A$ is subfit iff $\pH A$ is subfit iff $\min pX$ is dense in $pX$.
    \item $\M A$ is subfit iff $A$ is subfit iff $\min X$ is dense in $X$.
   
    \item $\I A$ is subfit iff $\min X$ is dense in the Skula topology of ${\sf OpUp}(X)$.
     \item $A\!^\sigma$ is subfit iff $A$ is Boolean iff $\min X =X$. \label[summary]{sum: subfit 4}    
        \end{enumerate}
\end{summary}
This motivates the following definition.
\begin{definition}\label{defn: BL and I subfit}
    We call $A\in \DLat$ \emph{$\BL$-subfit} if $\BL A$ is subfit, and \emph{$\I$-subfit} if $\I A$ is subfit.
\end{definition}
\begin{remark} Obviously the terms $\pH$-subfit, $\M$-subfit, and $(\cdot)^\sigma$-subfit are redundant;
 $\BL$-subfitness is a proper weakening of subfitness, while $\I$-subfitness is a proper
  strengthening thereof.
Subfitness is equivalent to every principal ideal being an intersection of maximal ideals. $\I$-subfitness strengthens this to all ideals, which is not quite equivalent to $A$ being Boolean (the same example as in \cref{prop: subfitness A vs can 1}
is $\I$-subfit --- as every ideal is principal --- but not Boolean). For $A$ to be Boolean,   $A\!^\sigma$ must be subfit.
\end{remark}

\section{Regularity} \label{sec: regularity}

In this section we characterize when the four completions are regular. For distributive lattices, regularity is a natural strengthening of subfitness. As we saw above, subfitness of the canonical completion is equivalent  to the lattice being Boolean. As we will see below, regularity yields the same result for the ideal completion. A more subtle result occurs for the Bruns-Lakser completion. The situation for the Dedekind-MacNeille completion is further complicated by the fact that the latter need not be distributive. Regular lattices behave very differently in the non-distributive setting; in particular, regularity no longer implies subfitness. This will be discussed  in more detail in a forthcoming paper. Here we will restrict our attention to Dedekind-MacNeille completions that are distributive.

The next definition is well known for frames (see, e.g., \cite[Sec.~V.5]{picado_frames_2012}) and generalizes directly to bounded lattices.
\begin{definition} \label{def: regular DLat}
Let $A$ be a bounded lattice. 
\begin{enumerate}
\item For $a,b \in A$, $a$ is {\em rather below} ({\em well inside}) $b$, written $a\rb b$, provided there is $c\in A$ such that $a\wedge c=0$ and $b\vee c=1$. 
  \item   $A$ is 
   {\em regular} if $a\not\le b$ implies the existence of $c\in A$ such that $c \rb a$ but $c \not\le b$. 
   \end{enumerate}
\end{definition}

\begin{samepage}
\begin{remark}\label{rem: pseudo and reg}\hfill
\begin{enumerate}  [ref = \theremark(\arabic*)]
\item If $a$ has the pseudocomplement $a^*$, then $a\rb b$ iff $a^* \vee b=1$. \label[remark]{rem: pseudo and reg 1} 
 \item $A$ is regular iff every element is the join of elements rather below it. This is the usual definition of regularity in frames, but the above definition is more appealing for arbitrary bounded lattices. Observe that while arbitrary joins may not exist in $A\in\DLat$, if $A$ is regular then the join of $\{ b\in A : b\rb a\}$ is $a$, so this join does exist for every $a\in A$. \label[remark]{rem: pseudo and reg 2}
    \item Every Boolean lattice is regular and in \cite[p.~360]{ball_lindelof_2017} it is shown that every regular distributive lattice is subfit. In a forthcoming paper we will show that this is no longer true for non-distributive lattices. 
\end{enumerate}     
\end{remark}
\end{samepage}

Our first goal is to characterize when distributive lattices are regular using Priestley duality. We do so by generalizing some existing results for frames, proved in \cite[Lem.~3.3,~3.6] {bezhanishvili_spectra_2016}. Since pseudocomplements may not exist in an arbitrary distributive lattice, the proofs below
are somewhat more involved.
We start by characterizing the rather below relation. 
\begin{lemma} \label{lem: rather below} 
For $A\in\DLat$ with Priestley dual $X$,
   \[ a \rb b\text{ iff }{\downarrow}\s(a)\subseteq\s(b).\]
\end{lemma}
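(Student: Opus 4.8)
The plan is to prove the biconditional $a \rb b$ iff ${\downarrow}\s(a) \subseteq \s(b)$ directly, translating each side through the Stone map $\s$ and the Priestley duality dictionary, paralleling \cref{lem: annih char 1} but now carrying along the extra ingredient $c$ that witnesses the rather below relation.

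For the forward direction, suppose $a \rb b$, so there is $c \in A$ with $a \wedge c = 0$ and $b \vee c = 1$. Applying $\s$ and using that $\s$ is a lattice embedding, these become $\s(a) \cap \s(c) = \varnothing$ and $\s(b) \cup \s(c) = X$. From $\s(a) \cap \s(c) = \varnothing$ and the fact that $\s(c)$ is a (clopen) \emph{downset}-complement — more precisely, $X \setminus \s(c)$ is a clopen downset containing $\s(a)$ — I would deduce ${\downarrow}\s(a) \subseteq X \setminus \s(c)$, i.e. ${\downarrow}\s(a) \cap \s(c) = \varnothing$. Combining with $\s(b) \cup \s(c) = X$ gives ${\downarrow}\s(a) \subseteq X \setminus \s(c) \subseteq \s(b)$, as desired.

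For the converse, suppose ${\downarrow}\s(a) \subseteq \s(b)$. The point is to manufacture a witnessing $c \in A$. Since $\s(a)$ is a clopen upset, ${\downarrow}\s(a)$ is a closed downset by \cref{fact: Priestley 4}, and it is disjoint from the closed upset $X \setminus \s(b)$ (an upset because $\s(b)$ is; closed because $\s(b)$ is clopen). By a standard separation argument in Priestley spaces — two disjoint closed sets, one a downset and one an upset, can be separated by a clopen upset (this is \cref{fact: Priestley 3} applied to write the closed upset $X \setminus \s(b)$ as an intersection of clopen upsets, then compactness to find a single clopen upset $V$ with $X \setminus \s(b) \subseteq V$ and $V \cap {\downarrow}\s(a) = \varnothing$) — I obtain a clopen upset $V$ with ${\downarrow}\s(a) \cap V = \varnothing$ and $X \setminus \s(b) \subseteq V$. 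Let $c \in A$ be the element with $\s(c) = V$. Then $\s(a) \cap \s(c) = \varnothing$ gives $a \wedge c = 0$, and $\s(b) \cup \s(c) \supseteq \s(b) \cup (X \setminus \s(b)) = X$ gives $b \vee c = 1$. Hence $a \rb b$.

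The only real subtlety — the step I expect to be the main obstacle — is the separation claim in the converse: that disjoint closed down-set and up-set in a Priestley space are separated by a clopen upset. This is exactly where compactness and the Priestley separation axiom are used, and it is the standard Priestley-space analogue of Urysohn-type separation; it follows cleanly from \cref{fact: Priestley 3} (closed upsets are intersections of clopen upsets) together with compactness of $X$. Everything else is routine bookkeeping with the facts that $\s$ preserves finite meets, joins, $0$, and $1$, and that ${\downarrow}\s(a)$ is closed because $\s(a)$ is closed (\cref{fact: Priestley 4}).
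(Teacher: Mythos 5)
Your forward direction is correct and is exactly the paper's argument. The converse, however, contains a step that fails as written: you assert that $X\setminus\s(b)$ is a closed \emph{upset} ``because $\s(b)$ is,'' and then propose to write it as an intersection of clopen upsets via \cref{fact: Priestley 3}. But the complement of an upset is a \emph{downset}, so $X\setminus\s(b)$ is a clopen downset, and the intersection of all clopen upsets containing it is (at best) ${\uparrow}(X\setminus\s(b))$, not $X\setminus\s(b)$ itself. The separation principle you invoke (disjoint closed upset and closed downset are separated by a clopen upset) is true, but you are applying it to a pair of two downsets, so the argument does not go through in the form given.

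The gap is repairable, but it needs an extra step you did not supply: replace $X\setminus\s(b)$ by the closed upset ${\uparrow}(X\setminus\s(b))$ (closed by \cref{fact: Priestley 4}) and check that it is still disjoint from ${\downarrow}\s(a)$ --- if $z\le y\le x$ with $z\notin\s(b)$ and $x\in\s(a)$, then $z\in{\downarrow}\s(a)\subseteq\s(b)$, a contradiction --- after which your compactness argument produces the desired clopen upset $V$ and hence $c$ with $\s(c)=V$. The paper sidesteps this entirely by working on the other side: since ${\downarrow}\s(a)$ is a closed downset contained in the open set $\s(b)$, \cref{fact: Priestley 3} and compactness yield a clopen \emph{downset} $D$ with ${\downarrow}\s(a)\subseteq D\subseteq\s(b)$, and one takes $c$ with $\s(c)=X\setminus D$. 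That route needs no auxiliary disjointness check and is the cleaner fix.
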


\begin{proof}
    First suppose $a \rb b$. Then there is $c\in A$ such that $a\wedge c=0$ and $b\vee c=1$. Therefore, $\s(a)\cap\s(c)=\varnothing$ and $\s(b)\cup\s(c)=X$. Since $\s(c)$ is an upset, ${\downarrow}\s(a)\cap\s(c)=\varnothing$. Thus, ${\downarrow}\s(a) \subseteq X\setminus\s(c) \subseteq \s(b)$.

    Conversely, suppose ${\downarrow}\s(a)\subseteq\s(b)$. Since ${\downarrow}\s(a)$ is a closed downset, it is the intersection of clopen downsets containing it (see \cref{fact: Priestley}). But then by compactness, there is a clopen downset $D$ such that ${\downarrow}\s(a)\subseteq D\subseteq\s(b)$. Let $c\in A$ be such that $\s(c)=X\setminus D$. Then ${\downarrow}\s(a)\cap\s(c)=\varnothing$ and $X\setminus\s(b)\subseteq\s(c)$. Therefore, $\s(a)\cap\s(c)=\varnothing$ and $\s(b)\cup\s(c)=X$. Thus, $a\wedge c=0$ and $b\vee c=1$, and hence $a \rb b$.
\end{proof}

Following \cite[Def.~3.4]{bezhanishvili_spectra_2016}, for each $V\in {\sf L}(X)$, define the {\em regular part} of $V$ to be  
\[
R(V) = \bigcup \{ U \in {\sf L}(X) : {\downarrow}U\subseteq V \}.
\]
For simplicity, we abbreviate the {\em regular part} of $\s(a)$ by $R_a$.

\begin{proposition} \label{lem: regular} 
For $A\in\DLat$ with Priestley dual $X$, the following are equivalent:
\begin{enumerate}[label=\upshape(\arabic*), ref = \theproposition(\arabic*)]
    \item $A$ is regular.
    \item $R_a$ is dense in $\s(a)$ for each $a\in A$.
\end{enumerate}
  
\end{proposition}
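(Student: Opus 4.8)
The plan is to prove the equivalence (1)$\Leftrightarrow$(2) directly by translating the lattice-theoretic definition of regularity into the topological language provided by \cref{lem: rather below}, using the Stone map $\s$ and \cref{prop: char of subfit}-style density reformulations. Throughout, recall that $\s : A \to {\sf L}(X)$ is a lattice isomorphism and $a \not\le b$ iff $\s(a) \not\subseteq \s(b)$, and that by \cref{fact: Priestley 1} the sets $\s(c) \setminus \s(d)$ form a basis for $X$; thus $R_a$ being dense in $\s(a)$ means precisely that every nonempty basic open subset of $\s(a)$ meets $R_a$.

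First I would record the elementary observation, immediate from \cref{lem: rather below}, that for $c, a \in A$ we have $c \rb a$ iff $\s(c) \subseteq R_a$; indeed $c \rb a$ iff ${\downarrow}\s(c) \subseteq \s(a)$, and since $\s(c) \in {\sf L}(X)$ this says exactly that $\s(c)$ is one of the clopen upsets in the union defining $R_a$, whence $\s(c) \subseteq R_a$; conversely if $\s(c) \subseteq R_a$ then, $\s(c)$ being compact and $R_a$ a union of clopen upsets, $\s(c)$ is covered by finitely many of them, and their union $U$ is a clopen upset with ${\downarrow}U \subseteq \s(a)$ and $\s(c) \subseteq U$, so ${\downarrow}\s(c) \subseteq {\downarrow}U \subseteq \s(a)$, giving $c \rb a$. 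Consequently $R_a = \bigcup\{\s(c) : c \rb a\}$, i.e.\ $R_a$ is the join in ${\sf OpUp}(X)$ of the (images of the) elements rather below $a$.

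For (1)$\Rightarrow$(2): assume $A$ is regular and fix $a \in A$; I must show $R_a$ is dense in $\s(a)$. Take a nonempty basic open $O = \s(c) \setminus \s(d)$ with $O \subseteq \s(a)$; pick a point $x \in O$. Since $x \notin \s(d)$ and $x \in \s(c) \subseteq \s(a)$, in particular $c \not\le d$... more usefully, I want to produce an element rather below $a$ whose $\s$-image meets $O$. The cleanest route: since $O \ne \varnothing$ and $O$ is open with a point $x$, using \cref{fact: Priestley 1} again shrink to a basic clopen neighborhood, and observe $\s(a) \not\subseteq X \setminus \{x\}$-type arguments don't quite work since singletons need not be closed-complements; instead I will argue contrapositively at the lattice level. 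Suppose $O \cap R_a = \varnothing$. Translating back, pick $e \in A$ with $\varnothing \ne \s(e) \subseteq O \subseteq \s(a)$ (possible as $O$ is a nonempty open subset of the clopen upset $\s(a)$, hence by \cref{fact: Priestley 2} contains a nonempty clopen upset, being a union of clopen upsets each contained in $\s(a)$). Then $\s(e) \cap R_a = \varnothing$. Let $b = $ an element with $\s(b) = \s(a) \setminus \s(e)$... here distributivity is used to realize $\s(a)\setminus\s(e)$ as a clopen upset only when $\s(e)$ is an upset, which it is. Then $a \not\le b$ (as $\s(e) \ne \varnothing$), so by regularity there is $f \rb a$ with $f \not\le b$, i.e.\ $\s(f) \subseteq R_a$ and $\s(f) \not\subseteq \s(b) = \s(a)\setminus\s(e)$; since $\s(f) \subseteq R_a \subseteq \s(a)$, the only way $\s(f) \not\subseteq \s(a)\setminus\s(e)$ is $\s(f) \cap \s(e) \ne \varnothing$, contradicting $\s(e) \cap R_a = \varnothing$. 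Hence $O \cap R_a \ne \varnothing$, and $R_a$ is dense in $\s(a)$.

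For (2)$\Rightarrow$(1): assume $R_a$ is dense in $\s(a)$ for every $a$, and suppose $a \not\le b$, so $\s(a) \setminus \s(b) \ne \varnothing$; this is a nonempty open subset of $\s(a)$, so by density it meets $R_a$, giving a clopen upset $U$ with ${\downarrow}U \subseteq \s(a)$ and $U \cap (\s(a)\setminus\s(b)) \ne \varnothing$, hence $U \not\subseteq \s(b)$. Choosing $c \in A$ with $\s(c) = U$, we get $c \rb a$ (by \cref{lem: rather below}) and $c \not\le b$, so $A$ is regular. I expect the main obstacle to be the (2)$\Rightarrow$(1) direction's packaging and, more delicately, the clopen-refinement step in (1)$\Rightarrow$(2) --- namely justifying that a nonempty open subset of the clopen upset $\s(a)$ contains a nonempty clopen upset that is still contained in $\s(a)$; this follows because, by \cref{fact: Priestley 2}, an open upset is a union of clopen upsets, but the subset $O = \s(c)\setminus\s(d)$ is merely open, not an upset, so one must first pass to ${\sf int_1}(O)$ or argue with a point $x \in O$ and a clopen upset neighborhood $W \ni x$ with $W \cap \s(d) = \varnothing$ and $W \subseteq \s(c)$ (available by the Priestley separation axiom applied to $x$ versus the closed set $\s(d)$, or rather versus points not above... care is needed), then $W \cap \s(a)$ is a clopen upset contained in $\s(a)$ containing $x$. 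I would make sure this refinement is stated as a small sublemma so the two implications read cleanly.
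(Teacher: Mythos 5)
Your direction (2)$\Rightarrow$(1) is correct and is essentially the paper's argument, as is your preliminary observation that $c\rb a$ iff $\s(c)\subseteq R_a$ (the compactness argument for the converse is fine, though it is not needed for the main proof). The genuine gap is in (1)$\Rightarrow$(2), exactly at the step you yourself flag as delicate: a nonempty basic open $O=\s(c)\setminus\s(d)$ contained in $\s(a)$ need \emph{not} contain a nonempty clopen upset. \cref{fact: Priestley 2} does not apply because $O$ is not an upset, and neither of your proposed repairs works: ${\sf int_1}(O)$ can be empty, and the Priestley axiom only separates $x$ from a single $y$ with $x\not\le y$, whereas points of $\s(d)$ may lie \emph{above} $x$. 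Indeed, finding $0\ne e$ with $\s(e)\subseteq\s(c)\setminus\s(d)$ amounts to finding $e$ with $c\wedge e\ne 0=d\wedge e$, and demanding this for every $c\not\le d$ is precisely $\wedge$-subfitness of $A$, which for frames is equivalent to Booleanness (\cref{cosubfit is Boolean 2}) and is certainly not implied by regularity. The two-element chain $x<y$ (with $\s(c)=\s(a)=X$ and $\s(d)=\{y\}$, so $O=\{x\}$) already shows the refinement claim is false as a general fact about Priestley spaces. A second, independent problem in the same passage is the choice of $b$ with $\s(b)=\s(a)\setminus\s(e)$: the difference of two clopen upsets is clopen but generally not an upset, so no such $b\in A$ need exist.

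The fix is the move you noticed and then abandoned: from $O=\s(c)\setminus\s(d)\ne\varnothing$ with $\s(c)\subseteq\s(a)$ you get $c\not\le d$, and you should apply regularity to \emph{this} pair, obtaining $e\rb c$ with $e\not\le d$. Then ${\downarrow}\s(e)\subseteq\s(c)\subseteq\s(a)$ gives $\s(e)\subseteq R_c\subseteq R_a$ (note that $R_c\subseteq R_a$ whenever $\s(c)\subseteq\s(a)$), while $\s(e)\not\subseteq\s(d)$ yields a point of $\s(e)\setminus\s(d)\subseteq O$. Hence $R_a\cap O\ne\varnothing$, with no need for an upset inside $O$ and no auxiliary element $b$. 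This is the paper's proof.
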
 

\begin{proof}
     (1)$\Rightarrow$(2): Let $a\in A$. Take $x\in\s(a)$ and let $U$ be a clopen neighborhood of $x$ in $\s(a)$. We must show that $U \cap R_a \ne \varnothing$. Since $U$ is an open neighborhood of $x$, there are $c,d\in A$ such that $x\in\s(c)\setminus\s(d)\subseteq U$ (see \cref{fact: Priestley 1}). Without loss of generality we may assume that $\s(c)\subseteq\s(a)$ (by intersecting $\s(c)$ with $\s(a)$). Because $s(c)\setminus\s(d)\ne\varnothing$, $\s(c)\not\subseteq\s(d)$, so $c\not\le d$. Since $A$ is regular, there is $e\in A$ such that $e \rb c$ and $e\not\le d$. By \cref{lem: rather below}, ${\downarrow}\s(e)\subseteq\s(c)$, so $\s(e)\subseteq R_c\subseteq R_a$ and $\s(e)\not\subseteq\s(d)$. Thus, $R_a\cap(\s(c)\setminus\s(d))\ne\varnothing$, so $R_a\cap U\ne\varnothing$, and hence $R_a$ is dense in $\s(a)$. 

    (2)$\Rightarrow$(1): Let $a\not\le b$. Then $\s(a)\not\subseteq\s(b)$, so $R_a\not\subseteq\s(b)$ since $R_a$ is dense in $\s(a)$. Therefore, there is $c\in A$ such that ${\downarrow}\s(c)\subseteq\s(a)$ and $\s(c)\not\subseteq\s(b)$. Thus, $c \rb a$ (by \cref{lem: rather below}) and $c\not\le b$, yielding that $A$ is regular. 
\end{proof}

 We next give a general characterization, akin to \cref{thm: Janowitz}, of when regularity is preserved and/or reflected when moving between $A$ and $B$, where $A$ is a bounded sublattice of $B$. We write $\rb_A$ or $\rb_B$ to emphasize which rather below relation is being used. Clearly $a \rb_A b$ implies $a \rb_B b$ for all $a,b\in A$, however the converse is not true in general. Because of this, the proposition below requires an extra assumption.  As in  \cref{thm: Janowitz}, distributivity plays no r\^ole.

\begin{proposition} \label{prop: when reg is preserved}
    Let $A$ be  a bounded sublattice of $B$. Suppose that $a \rb_A b$ iff $a \rb_B b$ for all $a,b\in A$.
    \begin{enumerate}[label=\normalfont(\arabic*), ref = \thelemma(\arabic*)]
        \item If $A$ is join-dense in $B$  and $B$ is regular then $A$ is regular.
        \item If $A$ is both join- and meet-dense in $B$ and $A$ is regular then $B$ is regular.
    \end{enumerate}
\end{proposition}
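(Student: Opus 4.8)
The plan is to follow the template of the proof of \cref{thm: Janowitz}, substituting witnesses for regularity in place of witnesses for subfitness. The only extra ingredient needed is the following pair of monotonicity properties of the rather below relation, immediate from \cref{def: regular DLat}: if $c\rb d$ and $c'\le c$ then $c'\rb d$, and if $c\rb d$ and $d\le d'$ then $c\rb d'$. Indeed, if $e$ witnesses $c\rb d$ (so $c\wedge e=0$ and $d\vee e=1$), then $c'\wedge e\le c\wedge e=0$ and $d'\vee e\ge d\vee e=1$, so $e$ also witnesses $c'\rb d$ and $c\rb d'$. I expect this to pose no real difficulty; the one genuinely load-bearing point is the hypothesis $a\rb_A b\Leftrightarrow a\rb_B b$ for $a,b\in A$, which is exactly what lets us land in $\rb_A$ rather than merely $\rb_B$ in part (1), and is the reason this statement, unlike \cref{thm: Janowitz}, cannot drop that assumption.

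For part (1) I would argue as follows. Let $a\nleq b$ in $A$. Since $B$ is regular, there is $x\in B$ with $x\rb_B a$ and $x\nleq b$. Because $A$ is join-dense in $B$ we have $x=\bigvee\{c\in A:c\le x\}$; since $x\nleq b$, not every such $c$ can lie below $b$ (else $b$ would be an upper bound of that set, forcing $x\le b$), so there is $c\in A$ with $c\le x$ and $c\nleq b$. From $c\le x$ and $x\rb_B a$ the first monotonicity property gives $c\rb_B a$, and the hypothesis then gives $c\rb_A a$. Thus $c$ witnesses regularity of $A$ for the pair $(a,b)$.

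For part (2), let $x\nleq y$ in $B$. Join-density of $A$ gives some $a\in A$ with $a\le x$ and $a\nleq y$ (otherwise $x=\bigvee\{c\in A:c\le x\}\le y$), and meet-density of $A$ then gives some $b\in A$ with $y\le b$ and $a\nleq b$ (otherwise $a$ is a lower bound of $\{c\in A:y\le c\}$, so $a\le\bigwedge\{c\in A:y\le c\}=y$). By regularity of $A$ there is $c\in A$ with $c\rb_A a$ and $c\nleq b$. Since $\rb_A\,\subseteq\,\rb_B$ we have $c\rb_B a$, and since $a\le x$ the second monotonicity property gives $c\rb_B x$; moreover $c\nleq b$ together with $y\le b$ forces $c\nleq y$. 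Hence $c$ witnesses regularity of $B$ for $(x,y)$, so $B$ is regular. Note that in part (2) the full strength of the extra hypothesis is not used --- only the automatic inclusion $\rb_A\,\subseteq\,\rb_B$ --- so that hypothesis is really needed only for part (1).
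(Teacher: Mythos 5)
Your proof is correct and follows essentially the same route as the paper's: in (1) use join-density to pull a witness $c\le x$ with $c\nleq b$ down into $A$, apply monotonicity of $\rb_B$ and the hypothesis to get $c\rb_A a$; in (2) use join- and meet-density to find $a\le x$, $y\le b$ with $a\nleq b$ and push the $A$-witness back up. Your explicit isolation of the two monotonicity properties and the remark that (2) only needs the automatic inclusion $\rb_A\subseteq\rb_B$ are accurate refinements of what the paper leaves implicit.
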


\begin{proof}
    (1) Let $a,b\in A$ with $a\not\le b$. Since $B$ is regular, there is $x\in B$ with $x \rb_B a$ and $x\not\le b$. Because $A$ is join-dense in $B$, there is $c\in A$ with $c\le x$ and $c\not\le b$. Therefore, $c\le x \rb_B a$, so $c \rb_B a$, and hence $c \rb_A a$ (by assumption) and $c\not\le b$. Thus, $A$ is regular.

    (2) Let $x,y\in B$ with $x\not\le y$. Since $A$ is both join- and meet-dense in $B$, there are $a,b\in A$ such that $a\le x$, $y\le b$, and $a\not\le b$. Because $A$ is regular, there is $c\in A$ with $c \rb_A a$ and $c\not\le b$. Therefore, 
   $c \rb_B a$ (by assumption) and $c\not\le b$, so $c \rb_B x$  and $c\not\le y$. Thus, $B$ is regular.
\end{proof}

Since $A$ is both join- and meet-dense in $\M A$, one could expect that regularity is both preserved and reflected by the Dedekind-MacNeille completion, but 
this is not the case
since the additional assumption in \cref{prop: when reg is preserved} may not be satisfied. We again emphasize that $\M A$ is not always distributive and hence regularity  of $\M A$ need not imply subfitness. Thus, we make the blanket assumption that $\M A$ is distributive. While this does not necessarily imply that $\M A$ is a frame, it is indeed the case provided $\M A$ is regular.

\begin{proposition} \label{prop: reg MA is reg BLA}
Let $A\in\DLat$ and $\M A $ be  distributive. Then regularity of $\M A$ implies that $\M A$ is a frame, and hence $\M A \cong \BL A$.
\end{proposition}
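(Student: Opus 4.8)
The plan is to show that if $\M A$ is distributive and regular, then $\M A$ satisfies the frame distributive law, so that by \cref{thm: MA frame} it coincides with $\BL A$. The natural route is through Priestley duality: by \cref{lem: char of M and Dinfty-2}, $\M A \cong {\sf DM}(X)$, the fixpoints of the closure operator ${\sf int_1 cl_2}$ on ${\sf OpUp}(X)$, while $\BL A \cong {\sf BL}(X)$, the fixpoints of the nucleus ${\sf int_1 cl}$. Since ${\sf DM}(X) \subseteq {\sf BL}(X)$ always (\cref{fact: Priestley 5}) and the two completions agree exactly when $\M A$ is a frame, it suffices to prove that every DM-upset is a BL-upset, i.e.\ that ${\sf int_1 cl_2}(U) = U$ forces ${\sf int_1 cl}(U) = U$, under the hypothesis that ${\sf DM}(X)$ is distributive and regular.

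First I would use regularity of $\M A$ to get a handle on a fixpoint $U$ of ${\sf int_1 cl_2}$. By \cref{rem: pseudo and reg 2}, regularity means $U = \bigvee \{ V \in {\sf DM}(X) : V \rb U \}$, the join being taken in ${\sf DM}(X)$; and by (the ${\sf DM}(X)$-analogue of) \cref{lem: rather below}, $V \rb U$ says ${\downarrow}{\sf cl}(V) \subseteq U$, or more precisely that the closure of $V$ in the appropriate topology is contained in $U$. The key point is that a ``rather below'' witness lets one replace an interior-of-downward-closure operation by an honest closure: if ${\downarrow}{\sf cl}(V) \subseteq U$ then ${\sf int_1 cl}(V) \subseteq {\sf int_1}\,{\downarrow}{\sf cl}(V) = {\sf int_1 cl_2}(V) \subseteq {\sf int_1 cl_2}(U) = U$. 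So each rather-below piece $V$ of $U$ already has its BL-closure inside $U$.

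Next I would combine these pieces. Write $U = \bigvee_{{\sf DM}(X)} \{V_i\}$ with each $V_i \rb U$. Because ${\sf DM}(X)$ is assumed distributive and is a complete lattice, and because we have just shown ${\sf int_1 cl}(V_i) \subseteq U$ for each $i$, I want to conclude ${\sf int_1 cl}\big(\bigcup_i V_i\big) \subseteq U$, which (together with $U \subseteq {\sf int_1 cl}(U)$, automatic since ${\sf int_1 cl}$ is inflationary) gives $U = {\sf int_1 cl}(\bigcup_i V_i)$, exhibiting $U$ as the ${\sf BL}(X)$-join of the $V_i$ and hence placing $U$ in ${\sf BL}(X)$. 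Once every element of ${\sf DM}(X)$ lies in ${\sf BL}(X)$, the inclusion ${\sf DM}(X) \subseteq {\sf BL}(X)$ of \cref{fact: Priestley 5} is an equality, $\M A \cong \BL A$ is a frame, and $\M A$ is a frame as claimed; alternatively, one invokes \cref{thm: MA frame} directly once distributivity plus the join formula give the infinite distributive law.

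The main obstacle I anticipate is the combining step: passing from ${\sf int_1 cl}(V_i) \subseteq U$ for each $i$ to ${\sf int_1 cl}\big(\bigcup_i V_i\big) \subseteq U$ is not formal, because ${\sf int_1 cl}$ does not commute with infinite unions in general. Here is where distributivity of $\M A = {\sf DM}(X)$ must be used essentially: the join $\bigvee_{{\sf DM}(X)} V_i$ equals ${\sf int_1 cl_2}(\bigcup_i V_i)$, and one needs distributivity to control how this interacts with an arbitrary element $W \in {\sf DM}(X)$ — concretely, to show $W \cap U = \bigvee_i (W \cap V_i)$, which forces $U$ to behave like a frame element and, via the nucleus ${\sf int_1 cl}$ being determined by finite meets and arbitrary joins, pins ${\sf int_1 cl}(U)$ down to $U$. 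I would handle this either by a direct density argument in the Priestley space (using \cref{fact: Priestley 1,fact: Priestley 2,fact: Priestley 3} to reduce membership in ${\sf int_1 cl}(\bigcup V_i)$ to basic clopen neighborhoods and ${\downarrow}$-closures of clopens), or by quoting the characterization of \cref{thm: MA frame} after verifying the frame law on $\M A$ using regularity to produce enough rather-below elements. I expect the Priestley-space density computation to be the cleanest, following the pattern already established in the proofs of \cref{lem: rather below} and \cref{lem: regular}.
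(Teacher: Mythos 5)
There is a genuine gap here, and in fact the reduction you set up at the outset does not target the right statement. You reduce the proposition to showing that ``every DM-upset is a BL-upset,'' i.e.\ that ${\sf int_1 cl_2}(U)=U$ forces ${\sf int_1 cl}(U)=U$. But that inclusion ${\sf DM}(X)\subseteq{\sf BL}(X)$ holds \emph{unconditionally} --- it is verified in \cref{fact: Priestley 5} without any regularity or distributivity hypothesis --- and it does not yield $\M A\cong\BL A$, because the two structures can still differ as complete lattices: an element of ${\sf DM}(X)$ can be the ${\sf DM}(X)$-join ${\sf int_1 cl_2}(\bigcup_i V_i)$ of a family without being its ${\sf BL}(X)$-join ${\sf int_1 cl}(\bigcup_i V_i)$. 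What you actually need is the reverse containment ${\sf BL}(X)\subseteq{\sf DM}(X)$ (equivalently, that the two joins agree), and this is precisely your ``combining step,'' which you acknowledge is not formal and which you do not carry out; all of the content of the proposition is concentrated there. (There is also a sign slip: the paper defines ${\sf cl_2}={\uparrow}{\sf cl}$, not ${\downarrow}{\sf cl}$, so your identification ${\sf int_1}{\downarrow}{\sf cl}(V)={\sf int_1 cl_2}(V)$ is incorrect; and since each $V_i\in{\sf DM}(X)$ is already a BL-upset by \cref{fact: Priestley 5}, your ``key point'' ${\sf int_1 cl}(V_i)\subseteq U$ is automatic from $V_i\subseteq U$ and carries no information about $U$ itself.)

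The paper avoids all of this with a short chain of citations that you should compare against: since $\M A$ is distributive, regularity of $\M A$ implies subfitness of $\M A$ (\cref{rem: pseudo and reg}(3), citing \cite[p.~360]{ball_lindelof_2017}); subfitness of $\M A$ gives subfitness of $A$ by \cref{thm: MA subfit implies A subfit}; subfitness of $A$ gives that $A$ is proHeyting by \cref{prop: subfit implies proH}; and then \cref{thm: MA frame} yields that $\M A$ is a frame and $\M A\cong\BL A$. If you want to salvage a duality-theoretic argument, you would need to use regularity to prove directly that ${\sf cl_2}$ and ${\sf cl}$ produce the same fixpoints of ${\sf int_1}(-)$ on the relevant unions, which is essentially a reproof of the chain above in the space $X$; the lattice-theoretic route is both shorter and already available from results established earlier in the paper.
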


\begin{proof}
If $\M A$ is distributive, by \cite[p.~360]{ball_lindelof_2017} regularity of $\M A$ implies subfitness of $\M A$, and consequently subfitness of $A$ by \cref{thm: MA subfit implies A subfit}. But then $A$ is proHeyting by \cref{prop: subfit implies proH}. Hence, $\M A$ is a frame and $\M A \cong \BL A$ by \cref{thm: MA frame}.
\end{proof}
The above proposition yields the following characterization of when a distributive Dede\-kind-MacNeille completion is regular.

\begin{proposition} \label{prop: reg MA }
Let $A\in\DLat$ and $\M A $ be distributive. The following are equivalent:
\begin{enumerate}[label=\upshape(\arabic*), ref = \theproposition(\arabic*)]
    \item $\M A$ is regular.
    \item $A$ is subfit and $\BL A$ is regular.
    \item $A$ is proHeyting and $\BL A$ is regular.
\end{enumerate}
\end{proposition}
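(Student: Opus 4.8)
The plan is to prove the three equivalences by first disposing of $(2)\Leftrightarrow(3)$, which is essentially immediate, and then establishing $(1)\Leftrightarrow(2)$ using \cref{prop: reg MA is reg BLA}, \cref{prop: M of pH A}, and \cref{prop: when reg is preserved}. The key observation making $(2)$ and $(3)$ equivalent is \cref{prop: subfit implies proH}: if $A$ is subfit then $A$ is proHeyting, so $(2)\Rightarrow(3)$. For the converse, if $A$ is proHeyting and $\BL A$ is regular, then by \cref{thm: MA frame} we have $\M A\cong\BL A$, which is regular, hence distributive, hence subfit by \cite[p.~360]{ball_lindelof_2017}; applying \cref{thm: MA subfit implies A subfit} gives that $A$ is subfit. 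This also shows that under either condition $(2)$ or $(3)$, $\M A$ is already known to be a frame isomorphic to $\BL A$.

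For $(1)\Rightarrow(2)$: assume $\M A$ is regular. By \cref{prop: reg MA is reg BLA}, $\M A$ is a frame and $\M A\cong\BL A$, so $\BL A$ is regular. Moreover, a regular distributive lattice is subfit, so $\M A$ is subfit, and then $A$ is subfit by \cref{thm: MA subfit implies A subfit}. This gives $(2)$.

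For $(2)\Rightarrow(1)$: assume $A$ is subfit and $\BL A$ is regular. Since $A$ is subfit, it is proHeyting by \cref{prop: subfit implies proH}, so $\M A\cong\BL A$ by \cref{thm: MA frame}; as $\BL A$ is regular, so is $\M A$. This closes the cycle. The main obstacle I anticipate is making sure the "regular distributive lattice is subfit" step is legitimately available in the form needed (it is cited from \cite[p.~360]{ball_lindelof_2017} and reiterated in \cref{rem: pseudo and reg}), and more importantly verifying that distributivity of $\M A$ is genuinely used correctly — the whole argument hinges on the blanket hypothesis that $\M A$ is distributive, which is what allows the implication "regular $\Rightarrow$ subfit" to be invoked for $\M A$ itself. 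Once that is in place, the proof is just a short chase through the cited results; no new computation with Priestley duals or relative annihilators is needed, since \cref{prop: when reg is preserved} is not even required here — the isomorphism $\M A\cong\BL A$ does all the work.

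\begin{proof}
$(2)\Leftrightarrow(3)$: If $A$ is subfit, then $A$ is proHeyting by \cref{prop: subfit implies proH}, so $(2)$ implies $(3)$. Conversely, suppose $A$ is proHeyting and $\BL A$ is regular. By \cref{thm: MA frame}, $\M A\cong\BL A$, so $\M A$ is regular. Since $\M A$ is distributive, \cite[p.~360]{ball_lindelof_2017} gives that $\M A$ is subfit, and hence $A$ is subfit by \cref{thm: MA subfit implies A subfit}. Thus $(3)$ implies $(2)$.

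$(1)\Rightarrow(2)$: Suppose $\M A$ is regular. By \cref{prop: reg MA is reg BLA}, $\M A$ is a frame and $\M A\cong\BL A$; in particular, $\BL A$ is regular. Since $\M A$ is a distributive (indeed, a frame) and regular, it is subfit by \cite[p.~360]{ball_lindelof_2017}, and therefore $A$ is subfit by \cref{thm: MA subfit implies A subfit}. Hence $(2)$ holds.

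$(2)\Rightarrow(1)$: Suppose $A$ is subfit and $\BL A$ is regular. Then $A$ is proHeyting by \cref{prop: subfit implies proH}, so $\M A\cong\BL A$ by \cref{thm: MA frame}. Since $\BL A$ is regular, so is $\M A$.
\end{proof}
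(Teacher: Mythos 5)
Your proof is correct and uses exactly the same ingredients as the paper's (\cref{prop: reg MA is reg BLA}, \cref{prop: subfit implies proH}, \cref{thm: MA frame}, \cref{thm: MA subfit implies A subfit}, and ``regular distributive implies subfit''); the only difference is organizational, as you prove the pairwise equivalences $(2)\Leftrightarrow(3)$ and $(1)\Leftrightarrow(2)$ whereas the paper runs the cycle $(1)\Rightarrow(2)\Rightarrow(3)\Rightarrow(1)$, which lets it skip your explicit $(3)\Rightarrow(2)$ argument. This is essentially the same proof.
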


\begin{proof}
(1)$\Rightarrow$(2): That $\BL A$ is regular follows from \cref{prop: reg MA is reg BLA}. Regular implies subfit for distributive lattices, so $\M A$ is subfit, but then so is $A$ by \cref{thm: MA subfit implies A subfit}.

(2)$\Rightarrow$(3): This follows from \cref{prop: subfit implies proH}. 

(3)$\Rightarrow$(1): This follows from \cref{thm: MA frame}.
\end{proof}

We next concentrate on characterizing when $\BL A$ is regular.
Since $\BL A$
is a frame, the rather below relation is conveniently expressible in terms of the pseudocomplement (see \cref{rem: pseudo and reg 1}). We use Priestley duality to describe this pseudocomplement. By \cref{lem: char of M and Dinfty-3}, $\BL A \cong \sf{BL} (X)$ and by \cref{fact: Priestley 5}, $\sf{BL} (X)$ is the sublocale of ${\sf OpUp}(X)$ given by the nucleus ${\sf int_1 cl}$.
Since the pseudocomplement in ${\sf OpUp}(X)$ is given by 
\[
U^* = X \setminus{\downarrow}{\sf cl}(U),
\]
the pseudocomplement in ${\sf BL}(X)$ is given by
\[
\lnot U = {\sf int_1 cl}(U^*) = {\sf int_1 cl}(X \setminus{\downarrow}{\sf cl}(U)).
\]

\begin{lemma} \label{lem: rather below in BL}
    Let $U,V\in{\sf BL}(X)$. 
    \begin{enumerate}[label=\upshape(\arabic*), ref = \thelemma(\arabic*)]
       \item $ {\sf int} \, {\downarrow} \, {\sf cl \, int} \,{\downarrow} \, {\sf cl}(U)=
         {\sf int} \, {\downarrow} \, {\sf cl} (U)$.
        \item $U{\rb_{{\sf BL}}}V$ iff ${\sf int}{\downarrow}{\sf cl}(U)\subseteq{\sf cl}(V)$.
        \item If $U,V\in{\sf L}(X)$, then $U{\rb_{{\sf BL}}}V$ iff ${\sf int}{\downarrow}U\subseteq V$. \label[lemma]{lem: rather below in BL 2}
    \end{enumerate}  
\end{lemma}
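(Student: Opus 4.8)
The plan is to prove the three items in order, using the description $\lnot U = {\sf int_1 cl}(X \setminus {\downarrow}{\sf cl}(U))$ of the pseudocomplement in ${\sf BL}(X)$ together with the translation formula ${\sf int_1}(S) = X \setminus {\downarrow}(X \setminus {\sf int}(S))$ between the unary-topology interior and the Stone interior, and the dual fact ${\sf cl_1}(S) = {\downarrow}{\sf cl}(S)$. The key simplification throughout will be to rewrite ${\sf int_1 cl}(X \setminus {\downarrow}{\sf cl}(U))$ as $X \setminus {\downarrow}{\sf cl}(X \setminus {\sf int}(X \setminus {\downarrow}{\sf cl}(U)))$; since $X \setminus {\sf int}(X \setminus T) = {\sf cl}(T)$ for any $T$, the inner part collapses and we get $\lnot U = X \setminus {\downarrow}{\sf cl}\,{\sf int}\,{\downarrow}{\sf cl}(U)$. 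So the complement of $\lnot U$ is exactly the closed downset ${\downarrow}{\sf cl}\,{\sf int}\,{\downarrow}{\sf cl}(U)$, and this is the set on which everything hinges.

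For (1), I would first observe the chain of inclusions. The inclusion ${\sf int}\,{\downarrow}\,{\sf cl\,int}\,{\downarrow}\,{\sf cl}(U) \subseteq {\sf int}\,{\downarrow}\,{\sf cl}(U)$ is immediate from ${\sf int}\,{\downarrow}\,{\sf cl}(U) \subseteq {\downarrow}\,{\sf cl}(U)$ applied inside ${\sf cl}$ and then ${\downarrow}$ and then ${\sf int}$ (using that ${\downarrow}$ is monotone and that ${\downarrow}$ of a closed set is closed by \cref{fact: Priestley 4}, so ${\sf cl}$ is absorbed). For the reverse inclusion, let $W = {\sf int}\,{\downarrow}\,{\sf cl}(U)$, which is an open downset (it is open by definition, and it is a downset because ${\sf int_2}$ of a downset is a downset and $W$ coincides with a down-interior here — alternatively one checks directly that the interior of a closed downset in a Priestley space is a downset using \cref{fact: Priestley 4}). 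Then $W \subseteq {\sf cl}(W)$, and since $W$ is a downset, ${\downarrow}\,{\sf cl}(W) = {\sf cl_2}(W)$... more carefully: $W \subseteq {\downarrow}\,{\sf cl\,int}\,{\downarrow}\,{\sf cl}(U) = {\downarrow}\,{\sf cl}(W)$, and $W$ open gives $W = {\sf int}(W) \subseteq {\sf int}\,{\downarrow}\,{\sf cl}(W) = {\sf int}\,{\downarrow}\,{\sf cl\,int}\,{\downarrow}\,{\sf cl}(U)$, which is the desired reverse inclusion. The only real content is that $W$ is a downset, which I would isolate as the main technical point of (1).

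For (2): by \cref{rem: pseudo and reg 1}, $U \rb_{{\sf BL}} V$ iff $\lnot U \vee_{{\sf BL}} V = X$, and by \cref{fact: Priestley 5} the ${\sf BL}$-join is ${\sf int_1 cl}$ of the union, so the condition is ${\sf int_1 cl}(\lnot U \cup V) = X$, equivalently (taking complements and using ${\sf int_1}(S) = X \setminus {\downarrow}(X \setminus {\sf int}(S))$) that ${\downarrow}\,{\sf cl}\bigl(X \setminus {\sf int}(\lnot U \cup V)\bigr) = \varnothing$, i.e. $X \setminus {\sf int}(\lnot U \cup V) = \varnothing$, i.e. $\lnot U \cup V$ is dense. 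Now substitute $\lnot U = X \setminus {\downarrow}{\sf cl}\,{\sf int}\,{\downarrow}{\sf cl}(U)$. Density of $(X \setminus C) \cup V$ where $C = {\downarrow}{\sf cl}\,{\sf int}\,{\downarrow}{\sf cl}(U)$ is a closed set means ${\sf cl}((X\setminus C)\cup V) = X$, equivalently every nonempty open meets $(X\setminus C) \cup V$, equivalently $C \setminus {\sf cl}(V)$ has empty interior, equivalently ${\sf int}(C) \subseteq {\sf cl}(V)$ (since $C\setminus{\sf cl}(V)$ open would be a nonempty open inside $C$ missing $V$). Finally ${\sf int}(C) = {\sf int}\,{\downarrow}{\sf cl}\,{\sf int}\,{\downarrow}{\sf cl}(U) = {\sf int}\,{\downarrow}{\sf cl}(U)$ by (1). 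This yields exactly $U \rb_{{\sf BL}} V$ iff ${\sf int}\,{\downarrow}\,{\sf cl}(U) \subseteq {\sf cl}(V)$.

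For (3): if $U, V \in {\sf L}(X)$ then $U$ is clopen so ${\sf cl}(U) = U$, and $V$ is clopen so ${\sf cl}(V) = V$; plugging into (2) gives $U \rb_{{\sf BL}} V$ iff ${\sf int}\,{\downarrow}U \subseteq V$. The main obstacle I anticipate is the downset/interior bookkeeping in (1) — specifically verifying that ${\sf int}\,{\downarrow}\,{\sf cl}(U)$ is a downset — and making sure the passages between ``dense'', ``interior empty'', and the ${\sf int_1}/{\sf cl_1}$ formulas are done with the right complements; once (1) is in hand, (2) and (3) are essentially formal manipulations with the pseudocomplement formula.
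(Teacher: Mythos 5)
Your proposal is correct and follows essentially the same route as the paper: both inclusions in (1) via the fact that ${\downarrow}\,{\sf cl}(U)$ is a closed downset, (2) by unwinding $\lnot U \vee_{{\sf BL}} V = X$ through the ${\sf int_1}/{\sf cl_1}$ translation formulas and then invoking (1), and (3) by specializing to clopen upsets. Two cosmetic points only: the reverse inclusion in (1) does not actually require $W={\sf int}\,{\downarrow}\,{\sf cl}(U)$ to be a downset (your ``more carefully'' chain already works for any open $W$, so the real content sits in the forward inclusion, which you handle correctly), and in (2) the intermediate formula ${\downarrow}\,{\sf cl}\bigl(X\setminus{\sf int}(\lnot U\cup V)\bigr)=\varnothing$ should read ${\downarrow}\bigl(X\setminus{\sf int}\,{\sf cl}(\lnot U\cup V)\bigr)=\varnothing$, which does yield exactly the density of $\lnot U\cup V$ that you correctly use in the rest of the argument.
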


\begin{proof} (1) The right-to-left inclusion is obvious. For the other inclusion, observe that 
\[
{\sf int} \, {\downarrow} \, {\sf cl \, int} \,{\downarrow} \, {\sf cl}(U) \subseteq {\sf int} \, {\downarrow} \, {\sf cl} \,{\downarrow} \, {\sf cl}(U) \subseteq {\sf int} \, {\downarrow} \, {\sf cl}(U)
\]
because ${\downarrow} \, {\sf cl}(U)$ is a closed downset (see \cref{fact: Priestley 4}). 

   (2) Recalling how joins are defined in ${\sf BL}(X)$ (see \cref{fact: Priestley 5}) and that ${\sf cl_1}={\downarrow} \, {\sf cl}$, by (1) we have:
    \begin{eqnarray*}
        U{\rb_{{\sf BL}}}V & \Leftrightarrow & \lnot U\vee V=X \\
        & \Leftrightarrow & {\sf cl}(\lnot U)\cup{\sf cl}(V)=X \\
        & \Leftrightarrow & {\sf cl\, int_1 cl}(X\setminus {\downarrow}{\sf cl} (U))\cup {\sf cl}(V)=X \\
        & \Leftrightarrow & (X \setminus {\sf int\, cl_1 \, int} \, {\downarrow}\, {\sf cl}(U)) \cup {\sf cl}(V)=X \\
        & \Leftrightarrow & {\sf int} \, {\downarrow} \, {\sf cl \, int} \,{\downarrow} \, {\sf cl}(U)\subseteq {\sf cl}(V) \\
        & \Leftrightarrow & {\sf int} \, {\downarrow} \, {\sf cl} (U)\subseteq {\sf cl}(V).
    \end{eqnarray*}  
    
      (3) This follows from (2) since $U,V\in{\sf L}(X)$ imply ${\sf cl}(U)=U$ and ${\sf cl}(V)=V$.
   \end{proof}

This allows us to see that when restricted to $A$, the rather below relations on $\BL A$ and $\pH A$ coincide.
\begin{proposition}\label{prop: rb in BL and pH}
For each $a,b \in A$, $a \rb_{\BL A}b$ iff $a \rb_{\pH A}b$.
\end{proposition}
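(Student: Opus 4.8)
The plan is to exploit two facts: $\BL A$ is a frame, so its rather below relation is controlled by pseudocomplements (\cref{rem: pseudo and reg 1}), and $\pH A$ is a bounded sublattice of $\BL A$ that, by construction, already contains every relative annihilator ideal of $A$. Consequently $0$, $1$, and binary meets and joins of elements of $\pH A$ are computed in the same way in $\pH A$ and in $\BL A$. The implication $a \rb_{\pH A} b \Rightarrow a \rb_{\BL A} b$ is then immediate: any witness $c \in \pH A$ with $a \wedge c = 0$ and $b \vee c = 1$ computed in $\pH A$ satisfies the same equalities in $\BL A$, so it also witnesses $a \rb_{\BL A} b$.

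For the converse I would produce a canonical witness. Assume $a \rb_{\BL A} b$. Viewing $\BL A$ as the frame of D-ideals of $A$, let $a^{*}$ be the pseudocomplement of the principal ideal ${\downarrow} a$ in $\BL A$; by \cref{rem: pseudo and reg 1}, $a \rb_{\BL A} b$ is equivalent to $a^{*} \vee b = 1$, so $a^{*}$ itself witnesses $a \rb_{\BL A} b$. It therefore suffices to show $a^{*} \in \pH A$, and the key point is that $a^{*}$ coincides with the relative annihilator $\langle a, 0 \rangle = \{ x \in A : a \wedge x = 0 \}$. Indeed, $\langle a, 0 \rangle$ is a D-ideal by \cref{lem: ann is D 1}, hence an element of $\BL A$; its intersection with ${\downarrow} a$ is $\{0\}$; and any ideal $D$ with $D \cap {\downarrow} a = \{0\}$ is contained in $\langle a, 0 \rangle$, since for $x \in D$ one has $a \wedge x \in D \cap {\downarrow} a = \{0\}$. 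Thus $\langle a, 0 \rangle$ is the largest element of $\BL A$ whose meet with ${\downarrow} a$ is $0$, i.e. $a^{*} = \langle a, 0 \rangle$. (On the Priestley side one can reach the same conclusion via the formula $\lnot U = {\sf int_1 cl}(X \setminus {\downarrow}{\sf cl}(U))$ for the pseudocomplement in ${\sf BL}(X)$ together with the fact, recorded just before \cref{prop: pH A = pH X}, that $X \setminus {\downarrow} K$ is already a BL-upset for clopen $K$, so $\lnot \s(a) = X \setminus {\downarrow} \s(a) \in {\sf pH}(X)$ by \cref{lem: annih char 1}.)

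Since $\pH A$ contains every relative annihilator ideal of $A$ by definition, $a^{*} = \langle a, 0 \rangle \in \pH A$. Then $a \wedge a^{*} = 0$ and $a^{*} \vee b = 1$ hold in $\pH A$, these being the same finite operations as in $\BL A$, so $a^{*}$ witnesses $a \rb_{\pH A} b$, completing the argument. I do not expect a genuine obstacle here; the only delicate points are verifying that the pseudocomplement of ${\downarrow} a$ taken inside the sublocale $\BL A$ of $\I A$ is exactly the relative annihilator $\langle a, 0 \rangle$ — which is the maximality observation above — and noting that this ideal really lies in $\pH A$ and not merely in $\BL A$, which is built into the definition of $\pH A$ as the sublattice of $\BL A$ generated by the relative annihilators.
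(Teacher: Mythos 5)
Your proof is correct. The easy direction (a witness in $\pH A$ is a witness in $\BL A$) is handled exactly as in the paper, and for the converse you and the paper ultimately exhibit the same canonical witness, namely the relative annihilator $\langle a,0\rangle$; but you reach it by a genuinely different route. The paper argues on the Priestley side: it invokes \cref{lem: rather below in BL 2} to translate $a \rb_{\BL A} b$ into ${\sf int}{\downarrow}\s(a)\subseteq\s(b)$, takes $U = X\setminus{\downarrow}\s(a)$ (which is $\s(\langle a,0\rangle)$ by \cref{lem: annih char 1}), and verifies by topological computation that $U$ witnesses the relation in ${\sf pH}(X)$. You instead work directly with D-ideals: you identify the pseudocomplement of ${\downarrow}a$ in $\BL A$ as $\langle a,0\rangle$ via an elementary maximality argument (any downset meeting ${\downarrow}a$ only in $\{0\}$ is contained in $\langle a,0\rangle$), note that this element lies in $\pH A$ by the very definition of the proHeyting extension, and observe that the pseudocomplement witness transfers to the bounded sublattice containing it. Your identification $({\downarrow}a)^* = \langle a,0\rangle$ is sound, and the whole argument avoids Priestley duality --- a point worth noting given the paper's own remark that its duality-based results are not choice-free. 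What the paper's version buys in exchange is uniformity: the formulas ${\sf int}{\downarrow}\s(a)\subseteq\s(b)$ and the description of joins in ${\sf pH}(X)$ are needed elsewhere (e.g.\ in \cref{BL regular1} and \cref{thm: BL boolean}), so the dual computation comes at no extra cost there. One cosmetic remark: in your maximality step you say ``any ideal $D$ with $D\cap{\downarrow}a=\{0\}$''; strictly you want this for any element of $\BL A$, i.e.\ any D-ideal, but your argument in fact works verbatim for any downset, so nothing is lost.
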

\begin{proof}
First suppose that $a \rb_{\BL A}b$. Let $X$ be the Priestley space of $A$. Then ${\sf int}{\downarrow}\s(a)\subseteq\s(b)$ by \cref{lem: rather below in BL 2}. Let $U=X\setminus {\downarrow}\s(a)$. Then $U\in{\sf pH}(X)$ by \cref{lem: annih char}, and $\s(a)\cap U=\varnothing$. Moreover, since ${\sf int}{\downarrow}\s(a)\subseteq\s(b)$, we have 
    \[
    {\sf cl}(U)\cup\s(b)={\sf cl}(X\setminus{\downarrow}\s(a))\cup\s(b)=(X\setminus{\sf int}{\downarrow}\s(a))\cup\s(b)=X.
    \]
    Therefore, $U\vee_{{\sf pH}(X)}\s(b)=X$, so $\s(a)$ is rather below $\s(b)$ in ${\sf pH}(X)$, and hence $a\rb_{\pH A}b$ by \cref{prop: pH A = pH X}. 

    For the converse, suppose that $a\rb_{\pH A}b$. By \cref{prop: pH A = pH X}, there is $U\in{\sf pH}(X)$ such that $\s(a)\cap U=\varnothing$ and $U \vee_{{\sf pH}(X)}\s(b)=X$. The former gives ${\downarrow}\s(a) \cap U = \varnothing$, so $U \subseteq X\setminus{\downarrow}\s(a)$. The latter yields that ${\sf cl}(U)\cup\s(b)=X$. Therefore, ${\sf cl}(X\setminus{\downarrow}\s(a))\cup\s(b)=X$, so $(X\setminus{\sf int}{\downarrow}\s(a))\cup\s(b)=X$, and hence ${\sf int}{\downarrow}\s(a)\subseteq\s(b)$. Thus, $a\rb_{\BL A}b$ by \cref{lem: rather below in BL 2}.
    \end{proof}

In order to show that regularity of $\BL A$ is equivalent to that of $\pH A$, we require the following general fact.

\begin{proposition} \label{BL regular} 
Let $A$ be a bounded sublattice of $B$. If $A$ is join-dense in $B$, then the following are equivalent:
\begin{enumerate}[label=\upshape(\arabic*), ref = \thetheorem(\arabic*)]
    \item $B$ is regular.
    \item $b=\bigvee\{ c\in A : c \rb_{B}b \}$ for each $b\in B$.
    \item $a=\bigvee\{ c\in A : c \rb_{B}a\}$ for each $a\in A$.  \label[proposition]{BL regular 3}
  \end{enumerate}  
\end{proposition}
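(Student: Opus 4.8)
The plan is to prove the cycle $(1)\Rightarrow(2)\Rightarrow(3)\Rightarrow(1)$. Two trivial monotonicity facts about the rather-below relation of $B$ will be used throughout, each proved by keeping the same witnessing element: (a) if $c\le x$ and $x\rb_{B}d$, then $c\rb_{B}d$; (b) if $c\rb_{B}a$ and $a\le x$, then $c\rb_{B}x$. Join-density of $A$ in $B$ will be the workhorse for passing between joins indexed by elements of $A$ and joins indexed by elements of $B$. The one place that needs genuine care is $(1)\Rightarrow(2)$: for a general bounded lattice $c\rb_{B}b$ does \emph{not} force $c\le b$, so it is not automatic that $b$ is even an upper bound of $\{c\in A:c\rb_{B}b\}$; this is exactly what \cref{rem: pseudo and reg 2} supplies.

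For $(1)\Rightarrow(2)$, fix $b\in B$ and set $T=\{x\in B:x\rb_{B}b\}$ and $S=\{c\in A:c\rb_{B}b\}\subseteq T$. By \cref{rem: pseudo and reg 2}, regularity of $B$ gives $b=\bigvee_{B}T$, so in particular $b$ is an upper bound of $S$. If $u$ is any upper bound of $S$, then for each $x\in T$ join-density yields $x=\bigvee_{B}\{c\in A:c\le x\}$, and each such $c$ lies in $S$ by (a), so $c\le u$ and hence $x\le u$; thus $u$ is an upper bound of $T$ and $u\ge\bigvee_{B}T=b$. Therefore $\bigvee S$ exists and equals $b$, which is $(2)$. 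The implication $(2)\Rightarrow(3)$ is immediate: since $A\subseteq B$, applying $(2)$ to an element $a\in A$ is precisely the assertion in $(3)$.

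For $(3)\Rightarrow(1)$ I verify the defining condition of regularity of $B$ from \cref{def: regular DLat}. Suppose $x\not\le y$ in $B$. Join-density of $A$ gives $x=\bigvee_{B}\{c\in A:c\le x\}$, so there is some $a\in A$ with $a\le x$ and $a\not\le y$ (otherwise $y$ would bound this set and force $x\le y$). Applying $(3)$ to $a$ and arguing the same way with $a\not\le y$ produces some $c\in A$ with $c\rb_{B}a$ and $c\not\le y$. Finally, $a\le x$ together with (b) gives $c\rb_{B}x$, so $c$ witnesses regularity for the pair $x\not\le y$. This closes the cycle and completes the proof.
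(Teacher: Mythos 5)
Your proof is correct and follows essentially the same route as the paper's: the cycle $(1)\Rightarrow(2)\Rightarrow(3)\Rightarrow(1)$, driven by join-density of $A$ and the two monotonicity properties of $\rb_B$. The only (harmless) cosmetic differences are that you spell out the supremum computation in $(1)\Rightarrow(2)$ in more detail and verify the definition of regularity directly in $(3)\Rightarrow(1)$, where the paper instead invokes the characterization of regularity from \cref{rem: pseudo and reg 2}.
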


\begin{proof}
(1)$\Rightarrow$(2): Let $b\in B$. Since $B$ is regular, $b=\bigvee\{ d\in B : d \rb_{B}b \}$ (see \cref{rem: pseudo and reg 2}). Because $A$ is join-dense in $B$, $d = \bigvee \{ c \in A : c \le d \}$. But $c \le d \rb_B b$ implies $c \rb_B b$, so $b=\bigvee\{ c\in A : c \rb_{B}b \}$.

(2)$\Rightarrow$(3): This is obvious. 

(3)$\Rightarrow$(1) Let $b\in B$. Since $A$ is join-dense in $B$, $b = \bigvee \{ a \in A : a \le b \}$. By assumption, $a = \bigvee \{ c\in A : c \rb_{B}a\}$ for each $a \
\in A$. From $c \rb_B a \le b$ it follows that $c \rb_B b$. Therefore, $b=\bigvee\{c\in A : c \rb_B b\}$, and hence $B$ is regular.
\end{proof}

\begin{theorem} \label{thm: BL A reg = pH A reg}
    For $A\in\DLat$, $\BL A$ is regular iff $\pH A$ is regular.
\end{theorem}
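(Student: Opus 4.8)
The plan is to deduce this from the general machinery already assembled, namely \cref{BL regular} (applied with $B = \BL A$ and the bounded sublattice taken to be $\pH A$) together with \cref{cor: pH A b}. Recall from \cref{prop: M of pH A} and the discussion preceding it that $\pH A$ is both join-dense and meet-dense in $\BL A$; in particular $\pH A$ is join-dense in $\BL A$, so \cref{BL regular} applies verbatim with $A$ replaced by $\pH A$ and $B$ replaced by $\BL A$. Thus $\BL A$ is regular iff $a = \bigvee\{\, c \in \pH A : c \rb_{\BL A} a \,\}$ for each $a \in \pH A$ (this is the equivalence (1)$\Leftrightarrow$(3) of \cref{BL regular}, with the ambient lattice being $\pH A$).

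Next I would bring in \cref{cor: pH A b}, which says $\BL(\pH A) \cong \BL A$, so I may equally apply \cref{BL regular} with $B = \BL(\pH A)$ and the distinguished sublattice $\pH A$ (which is join-dense in $\BL(\pH A)$). This gives: $\BL A \cong \BL(\pH A)$ is regular iff $a = \bigvee\{\, c \in \pH A : c \rb_{\BL(\pH A)} a \,\}$ for each $a \in \pH A$. Comparing with the previous paragraph, the two characterizations of regularity of $\BL A$ agree provided the rather-below relation on $\pH A$ induced by $\BL A$ coincides with that induced by $\BL(\pH A)$ — but under the isomorphism $\BL(\pH A) \cong \BL A$ of \cref{cor: pH A b} these are literally the same relation on $\pH A \times \pH A$, since the isomorphism restricts to the identity on $\pH A$ (both are the canonical join-dense/meet-dense copy of $\pH A$).

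It then remains to observe that regularity of $\pH A$ itself is, by \cref{rem: pseudo and reg 2}, exactly the statement $a = \bigvee_{\pH A}\{\, c \in \pH A : c \rb_{\pH A} a \,\}$ for each $a \in \pH A$. Since $\pH A$ is a sublattice of $\BL A$ on which, by the above, the relations $\rb_{\pH A}$ and $\rb_{\BL A}$ agree (indeed \cref{prop: rb in BL and pH} gives this agreement on the smaller sublattice $A$, but the same Priestley-space argument — or, more conceptually, the fact that $\pH A$ is meet-dense in $\BL(\pH A)$ so that \cref{prop: when reg is preserved}'s hypothesis is met — yields it on all of $\pH A$), and since joins of subsets of $\pH A$ that lie in $\pH A$ are computed the same way in $\pH A$ and in $\BL A$, the three conditions "$\BL A$ regular'', "(3) of \cref{BL regular} for $(\pH A, \BL A)$'', and "$\pH A$ regular'' are equivalent. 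The cleanest route is in fact to invoke \cref{prop: when reg is preserved} directly: with $A$ there equal to $\pH A$ and $B$ equal to $\BL A$, the hypothesis "$a \rb_{\pH A} b$ iff $a \rb_{\BL A} b$ for all $a,b \in \pH A$'' is the relation-coincidence just noted, and $\pH A$ is both join- and meet-dense in $\BL A$ by \cref{prop: M of pH A}; so part (1) of \cref{prop: when reg is preserved} gives "$\BL A$ regular $\Rightarrow$ $\pH A$ regular'' and part (2) gives the converse.

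The main obstacle is establishing that coincidence of rather-below relations on all of $\pH A$ (not just on $A$): \cref{prop: rb in BL and pH} is stated only for $a,b \in A$. I would handle this by the same argument used there — given $a,b \in \pH A$ with $a \rb_{\BL(\pH A)} b$, use \cref{prop: M of pH A} to realize $\BL A \cong \M(\pH A)$, apply the pseudocomplement computation (\cref{lem: rather below in BL}) inside ${\sf DM}(pX)$ where $pX$ is the Priestley dual of $\pH A$, and note that since $\pH A$ is already proHeyting (\cref{cor: pH A a}), the relevant witnessing open upset $pX \setminus {\downarrow}\s(a)$ is itself a DM-upset, i.e. lies in $\pH A$'s copy inside $\BL(\pH A)$ — giving $a \rb_{\pH A} b$ in the sense of \cref{def: regular DLat}. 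The reverse implication is the trivial direction ($\rb$ for a sublattice always implies $\rb$ for the overlattice). Once this lemma is in hand, the theorem follows by the two-line application of \cref{prop: when reg is preserved} indicated above.
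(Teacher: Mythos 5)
Your overall strategy is sound up to one step, but that step contains a genuine gap. Everything reduces, as you say, to invoking \cref{prop: when reg is preserved} with the sublattice $\pH A$ of $\BL A$, and hence to the claim that $a \rb_{\pH A} b$ iff $a \rb_{\BL A} b$ for \emph{all} $a,b\in \pH A$. \cref{prop: rb in BL and pH} only gives this for $a,b\in A$, and your proposed extension to $\pH A$ does not work: you argue that since $\pH A$ is proHeyting, the witnessing set $pX\setminus{\downarrow}\s(a)$ ``lies in $\pH A$'s copy inside $\BL(\pH A)$.'' But proHeytingness of $\pH A$ only makes the relative annihilator $\langle a,0\rangle$ a \emph{normal ideal} of $\pH A$, i.e.\ a DM-upset of $pX$, hence an element of $\M(\pH A)\cong\BL A$ --- not an element of $\pH A$ itself. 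For $pX\setminus{\downarrow}\s(a)$ to lie in $\pH A$ it would have to be a \emph{clopen} upset of $pX$, i.e.\ $\langle a,0\rangle$ would have to be a principal ideal of $\pH A$, which amounts to $a$ having a pseudocomplement in $\pH A$. Since $\pH A$ need not be Heyting (see the footnote to \cref{cor: pH A a}), this is exactly what you cannot assume. (The reason the argument of \cref{prop: rb in BL and pH} succeeds for $a\in A$ is that there the witness is a relative annihilator \emph{of $A$}, which lies in $\pH A$ by the very definition of $\pH A$; the analogous witness for $a\in\pH A$ is a relative annihilator of $\pH A$, which lives one level up, in $\pH(\pH A)$.)

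The paper's proof avoids this issue entirely by never taking $\pH A$ as the reference sublattice: it applies \cref{BL regular} twice with the \emph{same} join-dense sublattice $A$, once inside $\pH A$ and once inside $\BL A$, so that the only coincidence of rather-below relations needed is on $A\times A$, which is precisely \cref{prop: rb in BL and pH}. Concretely, for $a\in A$ one compares the conditions $a=\bigvee_{\BL A}\{c\in A: c\rb_{\BL A}a\}$ and $a=\bigvee_{\pH A}\{c\in A: c\rb_{\pH A}a\}$; the two sets coincide by \cref{prop: rb in BL and pH}, and the join conditions transfer using the density of $\pH A$ in $\BL A$. I would recommend reorganizing your argument along these lines: your first paragraph already sets up \cref{BL regular}, and switching the distinguished sublattice from $\pH A$ to $A$ (using \cref{BL regular 3}) removes the need for the unproven coincidence on $\pH A\times\pH A$. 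If you do wish to keep the route through \cref{prop: when reg is preserved}, you would first have to actually establish that coincidence, which is not something the paper's lemmas provide.
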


\begin{proof}
Let $a\in A$. Since $\pH A$ is join-dense in $\BL A$, it follows from \cref{prop: rb in BL and pH} that $a=\bigvee_{\BL A}\{ c\in A : c \rb_{\BL A}a\}$ iff $a=\bigvee_{\pH A}\{ c\in A : c \rb_{\pH A}a\}$. But $A$ is join-dense in both $\pH A$ and $\BL A$, and thus, by \cref{BL regular}, $\BL A$ is regular iff $\pH A$ is regular.
\end{proof}

We next give a characterization of when $\BL A$ is regular akin to \cref{lem: regular}.

\begin{definition}
    Let $V\in{\sf BL}(X)$. We call 
\[
R_{\sf BL}(V) := \bigcup\{ U\in{\sf L}(X) : U {\rb_{{\sf BL}}} V \}
\]
the {\em regular part of $V$ in ${\sf BL}(X)$}.
\end{definition}

\begin{theorem} \label{BL regular1}
Let $A\in \DLat$. The following are equivalent:
\begin{enumerate}[label=\upshape(\arabic*), ref = \thetheorem(\arabic*)]
    \item $\BL A$ is regular.
     \item $R_{\sf BL}(V)$ is dense in $V$ for each $V\in{\sf BL}(X)$.
    \item $R_{\sf BL}(V)$ is dense in $V$ for each $V\in{\sf L}(X)$. \label[theorem]{BL regular 5}   
    \end{enumerate}
\end{theorem}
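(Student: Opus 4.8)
The plan is to route the whole argument through the join formula for ${\sf BL}(X)$ together with \cref{BL regular}, so that the topological content shrinks to a single clean equivalence. The key preliminary remark is that, since joins in ${\sf BL}(X)\cong\BL A$ are given by $\bigvee S={\sf int_1cl}(\bigcup S)$ (\cref{fact: Priestley 5}), for every $V\in{\sf BL}(X)$ we have
\[
\bigvee_{{\sf BL}(X)}\{\,W\in{\sf L}(X):W\rb_{\sf BL}V\,\}={\sf int_1cl}(R_{\sf BL}(V)).
\]
Because $A\cong{\sf L}(X)$ is join-dense in $\BL A\cong{\sf BL}(X)$, \cref{BL regular} (applied with ${\sf L}(X)$ as the dense sublattice) says precisely that $\BL A$ is regular iff $V={\sf int_1cl}(R_{\sf BL}(V))$ holds for all $V\in{\sf BL}(X)$, and equivalently iff it holds merely for all $V\in{\sf L}(X)$; this second form is what will feed into condition (3).

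So everything reduces to showing that, for $V\in{\sf BL}(X)$, the identity $V={\sf int_1cl}(R_{\sf BL}(V))$ is equivalent to $R_{\sf BL}(V)$ being dense in $V$ (in the Stone topology, as in \cref{lem: regular}). First I would record two trivialities: (i) for any $S\subseteq X$ one has ${\sf int_1cl}(S)\subseteq{\sf cl}(S)$, since ${\uparrow}x\subseteq{\sf int\,cl}(S)$ forces $x\in{\sf cl}(S)$; and (ii) $R_{\sf BL}(V)$ is always an open upset contained in $V$ — it is a union of clopen upsets, each $\subseteq V$ because $W\rb_{\sf BL}V$ implies $W\subseteq V$ in the frame ${\sf BL}(X)$ — and ${\sf int_1}$ fixes open upsets. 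The forward direction is then immediate from (i): $V={\sf int_1cl}(R_{\sf BL}(V))\subseteq{\sf cl}(R_{\sf BL}(V))$, so $R_{\sf BL}(V)$ is dense in $V$. For the converse, density gives $V\subseteq{\sf cl}(R_{\sf BL}(V))$; applying the monotone operator ${\sf int_1}$ and using (ii) yields $V={\sf int_1}(V)\subseteq{\sf int_1cl}(R_{\sf BL}(V))$, while $R_{\sf BL}(V)\subseteq V$ together with $V$ being a fixpoint of the nucleus ${\sf int_1cl}$ gives the reverse inclusion ${\sf int_1cl}(R_{\sf BL}(V))\subseteq{\sf int_1cl}(V)=V$.

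Assembling the three implications is then routine: (1)$\Rightarrow$(2) is the forward direction of the equivalence applied to all $V\in{\sf BL}(X)$; (2)$\Rightarrow$(3) is immediate from ${\sf L}(X)\subseteq{\sf BL}(X)$; and (3)$\Rightarrow$(1) is the converse direction applied to each $V\in{\sf L}(X)$, followed by \cref{BL regular}. I do not expect a genuine obstacle: the only real decision is to characterize regularity via the join formula rather than unwinding it by hand, and the one point needing care is the bookkeeping with ${\sf int_1}$, ${\sf cl}$, and the nucleus ${\sf int_1cl}$ — concretely fact (i), which is exactly what makes the equation ``$V={\sf int_1cl}(R_{\sf BL}(V))$'' strong enough to force density.
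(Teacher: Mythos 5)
Your proposal is correct and follows essentially the same route as the paper: both arguments reduce regularity of ${\sf BL}(X)$ to the identity $V={\sf int_1cl}(R_{\sf BL}(V))$ via the join formula of \cref{fact: Priestley 5} and \cref{BL regular}, and then identify that identity with density of $R_{\sf BL}(V)$ in $V$. The only difference is that you spell out the last equivalence (the paper merely asserts it), and your bookkeeping there is sound.
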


\begin{proof} 
(1)$\Leftrightarrow$(2): Since $\BL A \cong {\sf BL}(X)$, it is enough to show that ${\sf BL}(X)$ is regular iff $R_{\sf BL}(V)$ is dense in $V$ for each $V\in{\sf BL}(X)$. 
    Let $V\in{\sf BL}(X)$. We have 
    \[
    \bigvee\{ U\in{\sf L}(X) : U {\rb_{{\sf BL}}} V \} = {\sf int_1 \, cl}\bigcup\{ U\in{\sf L}(X) : U {\rb_{{\sf BL}}} V \} = {\sf int_1 \, cl}(R_{\sf BL}(V)).
    \]
    Therefore, by \cref{BL regular}, ${\sf BL}(X)$ is regular iff $V=
   {\sf int_1 \, cl}(R_{\sf BL}(V))$ iff $R_{\sf BL}(V)$ is dense in $V$ for each $V\in{\sf BL}(X)$. 

(1)$\Leftrightarrow$(3): By \cref{BL regular}, ${\sf BL}(X)$ is regular iff $V = \bigvee \{ U \in{\sf L}(X) : U {\rb_{{\sf BL}}} V \}$ for each $V \in {\sf L}(X)$. The same proof as in (1)$\Leftrightarrow$(2) may be used to show that this is equivalent to $R_{\sf BL}(V)$ being dense in $V$ for each  $V \in {\sf L}(X)$. 
\end{proof}

We now compare regularity of $A$ to that of $\BL A$.

\begin{theorem}\label{thm: BL preserves regular}
Let $A\in{\DLat}$. 
\begin{enumerate}[label=\upshape(\arabic*), ref = \thetheorem(\arabic*)]
    \item $A$ regular implies that $\BL A$ is regular. \label[theorem]{thm: BL preserves regular a}
    \item  $\BL A$ regular does not imply that $A$ is regular. \label[theorem]{thm: BL preserves regular b}
\end{enumerate}  
\end{theorem}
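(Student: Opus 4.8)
For part (1), the strategy is to reduce to the already-established fact that regularity is reflected by Dedekind–MacNeille completions. First I would observe that if $A$ is regular, then $A$ is subfit (by \cref{rem: pseudo and reg}, since regular distributive lattices are subfit), hence proHeyting by \cref{prop: subfit implies proH}, so $\M A \cong \BL A$ by \cref{thm: MA frame}. It then suffices to show that $\M A$ is regular whenever $A$ is, and since $A$ is both join- and meet-dense in $\M A$, the only thing needed to invoke \cref{prop: when reg is preserved}(2) is the hypothesis that $a \rb_A b$ iff $a \rb_{\M A} b$ for $a,b \in A$. One direction is automatic; for the other, suppose $a \rb_{\M A} b$, so there is $x \in \M A$ with $a \wedge x = 0$ and $b \vee x = 1$. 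Here I would use that $A$ is regular together with meet-density: write $x = \bigwedge\{c \in A : x \le c\}$, pick $c \in A$ with $x \le c < 1$, and push the witness down into $A$. Alternatively — and I suspect this is cleaner — work on the Priestley side: $a \rb_{\BL A} b$ iff ${\sf int}\,{\downarrow}\s(a) \subseteq \s(b)$ by \cref{lem: rather below in BL 2}, whereas $a \rb_A b$ iff ${\downarrow}\s(a) \subseteq \s(b)$ by \cref{lem: rather below}, and when $A$ is subfit (equivalently $\min X$ dense in $X$) these coincide because ${\downarrow}\s(a)$ is the closure of ${\sf int}\,{\downarrow}\s(a)$ in the relevant sense. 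Then \cref{prop: when reg is preserved}(2) gives regularity of $\M A \cong \BL A$.

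For part (2), the plan is to exhibit a concrete non-regular distributive lattice whose Bruns–Lakser completion is regular, using Priestley duality. Given the pattern of \cref{thm: Fred Q1 b}, a natural candidate is a small modification of the space $X$ in \cref{Figure 1}, or a space built from countably many copies of a "broom" glued at a limit point, chosen so that $A = {\sf L}(X)$ fails \cref{lem: regular}(2) — i.e.\ $R_a$ is not dense in $\s(a)$ for some clopen upset — while ${\sf BL}(X)$ satisfies \cref{BL regular 5}. To check non-regularity of $A$ I would find a clopen upset $V$ and a point $x \in V$ such that no clopen upset $U$ with ${\downarrow}U \subseteq V$ comes near $x$; typically $x$ is a limit point whose down-set meets the "outside" of $V$. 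To check regularity of $\BL A$ I would verify, for each $V \in {\sf L}(X)$, that $R_{\sf BL}(V) = \bigcup\{U \in {\sf L}(X) : {\sf int}\,{\downarrow}U \subseteq V\}$ is dense in $V$; the point is that the ${\sf int}$ in the ${\sf BL}$-version of the rather-below relation (\cref{lem: rather below in BL 2}) is much more permissive than the bare ${\downarrow}$ in the $A$-version (\cref{lem: rather below}), so many more clopen upsets qualify, and in a suitably engineered space every point of $V$ is in the closure of such a $U$.

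The main obstacle is the construction and verification in part (2): one must design a Priestley space delicate enough that the failure of regularity of $A$ is genuinely caused by the gap between ${\downarrow}$ and ${\sf int}\,{\downarrow}$, and then carry out a careful case analysis (as in the proof of \cref{thm: Fred Q1 b}) showing $R_{\sf BL}(V)$ is dense for every $V \in {\sf L}(X)$. Part (1), by contrast, should be routine once the equivalence $\rb_A\;\Leftrightarrow\;\rb_{\M A}$ on $A$ is established via subfitness; the real care there is confirming that subfitness is exactly what closes the gap between the two rather-below relations.
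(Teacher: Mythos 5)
There is a genuine gap in your treatment of part (1). You route the argument through \cref{prop: when reg is preserved}(2), whose hypothesis is that $a \rb_A b$ iff $a \rb_{\M A} b$ for all $a,b\in A$, and the justification you offer for the nontrivial direction is that subfitness (density of $\min X$) forces ${\downarrow}\s(a)$ to be the closure of ${\sf int}\,{\downarrow}\s(a)$, hence forces ${\downarrow}\s(a)\subseteq\s(b)$ whenever ${\sf int}\,{\downarrow}\s(a)\subseteq\s(b)$. That claim is false, and the paper's own \cref{ex: BL A reg A not reg} refutes it: there $A={\sf L}(X)$ is subfit, yet for $W=\{y_n:n\ge 1\}\cup\{y_\infty\}$ one computes ${\sf int}\,{\downarrow}W=\{y_n:n\ge 1\}\cup\{y_\infty,z\}=:V$ while $x_\infty\in{\downarrow}W\setminus V$, so $W\rb_{\sf BL}V$ but $W\not\rb_A V$ (\cref{lem: rather below,lem: rather below in BL 2}). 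Your alternative suggestion (``push the witness down'' using meet-density) also fails: from $a\wedge x=0$ and $b\vee x=1$ with $x\in\M A$, neither enlarging nor shrinking $x$ to an element of $A$ preserves both conditions without some compactness. The fix is that the full equivalence of the two rather-below relations is not needed at all: only the trivial inclusion $\rb_A\,\subseteq\,\rb_{\BL A}$ is required. This is exactly the paper's proof, which applies the join-density criterion \cref{BL regular} (equivalently: $R(U)\subseteq R_{\sf BL}(U)$ for every clopen upset $U$, so density of $R(U)$ in $U$ from \cref{lem: regular} immediately gives density of $R_{\sf BL}(U)$ in $U$, which is \cref{BL regular 5}). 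No reduction to $\M A$, proHeytingness, or subfitness is needed.

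For part (2) you give a plan but no example, so as it stands nothing is proved. You also anticipate a delicate construction with a ``case analysis as in \cref{thm: Fred Q1 b},'' but the paper's counterexample is much simpler: take $A$ to be the finite subsets of $\N$ together with $\N$ (Priestley dual: a fan of isolated maximal points $x_n$ over a single minimal limit point $x_\infty$). This $A$ is not subfit, hence not regular, while ${\sf BL}(X)\cong\mathcal P(\N)$ is Boolean and therefore regular — no density verification is required. If you want the stronger phenomenon where $A$ is even subfit, that is precisely \cref{ex: BL A reg A not reg}, but for the bare statement of part (2) the fan suffices.
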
 

\begin{proof} 

(1) This follows from \cref{BL regular} since $a \rb_A b$ implies $a \rb_{\BL A} b$ for each $a,b \in A$.\footnote{It can also be derived from \cref{lem: regular,BL regular 5} since $R(U)\subseteq R_{\sf BL}(U)$ for each clopen upset $U$ of the Priestley space of $A$.}

(2) Let $A$ be the finite subsets of $\N$ together with $\N$. Then $A \in \DLat$, but it is not regular since it is not subfit. 
 On the other hand, we show that $\BL A$ is isomorphic to $\mathcal P(\N)$, so $\BL A$ is Boolean, hence regular. Let $X$ be the Priestley space of $A$, where $x_n$ is the prime filter of $A$ generated by $\{n\}$ and $x_\infty$ is $\{\N\}$ (see \cref{Fig 3}).  

  \begin{center} 
    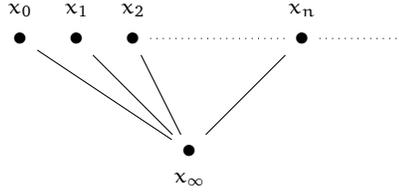
\begin{figure}[hbt!]
\begin{tikzpicture}[scale=.75]
 \node  (x0) at (0,1) {$\bullet$};
         \node  (x0_) at (0,1.5) {$\scriptstyle{x_0}$}; 
\node  (x1) at (1,1) {$\bullet$};\node  (x1_) at (1,1.5) {$\scriptstyle{x_1}$}; 
\node  (x2) at (2,1) {$\bullet$};\node  (x2_) at (2,1.5) {$\scriptstyle{x_2}$}; 
\node  (xn) at (5,1) {$\bullet$};\node  (xn_) at (5,1.5) {$\scriptstyle{x_n}$}; 
\node  (xinf) at (3,-1) {$\bullet$};\node  (xinf_) at (3,-1.5) {$\scriptstyle{x_\infty}$};

\node  (y1) at (7,1){};
\draw (x0) --  (xinf);
\draw (x1) --  (xinf);
\draw (x2) --  (xinf);
\draw (xn) --  (xinf);
\draw[dotted] (x2) --  (xn);
\draw[dotted] (xn) --  (y1);
\end{tikzpicture}
\caption{The Priestley space $X$ of $A$.}
        \label{Fig 3}
    \end{figure}
\end{center}
 
 By \cref{lem: char of M and Dinfty-3}, it is enough to show that ${\sf BL}(X)$ is isomorphic to $\mathcal P(\N)$. Since $x_\infty$ is the only limit point of $X$, open upsets of $X$ are $X$ and the subsets of $\{ x_n : n\in\N \}$. Of these, only $\{ x_n : n\in\N \}$ is not a BL-upset, yielding that ${\sf BL}(X)$ is isomorphic to $\mathcal P(\N)$.
 \end{proof}

We next strengthen \cref{thm: BL preserves regular b} and show that there are even subfit $A$ such that $\BL A$ is regular but $A$ is not. 

\begin{example} \label{ex: BL A reg A not reg}
Consider the space $X$ depicted in \cref{Figure 4}, where 
\begin{itemize}
    \item $\{x_n : n\in\N \}\cup\{ y_n : n\in\N \}\cup\{z\}$ is the set of isolated points; 
    \item $x_\infty$ is the limit of $\{x_n : n\in\N \}$; 
    \item $y_\infty$ is the limit of $\{y_n : n\in\N \}$. 
\end{itemize}
Thus, $X$ is the two-point compactification of the discrete space 
\[
\{x_n : n\in\N \}\cup\{ y_n : n\in\N \}\cup\{z\},
\]
and hence is a Stone space. 
It is straightforward to check that with the order as depicted below, $X$ is a Priestley space. 

 \begin{center}
    \begin{figure}[hbt!]
\begin{tikzpicture}[scale=.75]
 \node  (x0) at (0,1) {$\bullet$};
         \node  (x0_) at (0,0.5) {$\scriptstyle{x_0}$}; 
\node  (x1) at (1,1) {$\bullet$};\node  at (1,0.5) {$\scriptstyle{x_1}$}; 
\node  (x2) at (2,1) {$\bullet$};\node   at (2,0.5) {$\scriptstyle{x_2}$}; 
\node  (z) at (6,1) {$\bullet$};\node   at (6,0.5) {$\scriptstyle{z}$}; 
\node  (xinf) at (5,1) {$\bullet$};\node at (5,0.5) {$\scriptstyle{x_\infty}$};
\node  (yinf) at (5.5,3) {$\bullet$};\node  at (5.5,3.5) {$\scriptstyle{y_\infty}$};
\node  (y2) at (8,3) {$\bullet$};\node   at (8,3.5) {$\scriptstyle{y_2}$};
\node  (y1) at (9,3) {$\bullet$};\node   at (9,3.5) {$\scriptstyle{y_1}$};
\node  (y0) at (10,3) {$\bullet$};\node   at (10,3.5) {$\scriptstyle{y_0}$};
\draw[dotted] (x2) --  (xinf);
\draw[dotted] (yinf) --  (y2);
\draw  (xinf)-- (yinf);
\draw  (z)-- (yinf);
\end{tikzpicture}
\caption{A Priestley space $X$ with ${\sf L}(X)$ subfit, ${\sf BL}(X)$ regular, but ${\sf L}(X)$ not regular.}
        \label{Figure 4}
    \end{figure}
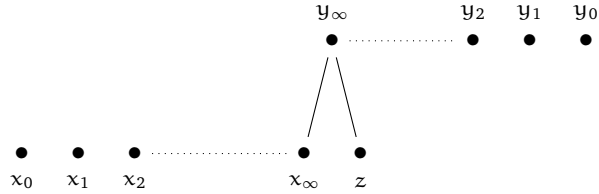
\end{center}

Let $A = {\sf L}(X)$ and, as before, identify $X$ with the Priestley space of $A$. Since $\min X$ is dense in $X$ (the only non-minimal point is $y_\infty$, which is a limit point), we see that $A$ is subfit (by \cref{prop: char of subfit}).
But $A$ is not regular. To see this, consider the clopen upset $U = \{ y_1, y_2, \dots \} \cup \{ y_\infty,z\}$. We show that $R(U)=\{ y_1, y_2, \dots \}$. Let $V$ be a clopen upset contained in $U$. If $y_\infty \in V$, then ${\downarrow}V \not\subseteq U$ since $x_\infty\in{\downarrow}V\setminus U$. Otherwise 
$V\subseteq\{ y_1, y_2, \dots \}$, and hence ${\downarrow}V=V \subseteq U$. Thus, $V \prec U$ iff $V \subseteq \{ y_1, y_2, \dots \}$, yielding that $R(U)=\{ y_1, y_2, \dots \}$. Since $z \notin {\sf cl}\, R(U)$, we see that $R(U)$ is not dense in $U$. Therefore, $A$ is not regular by \cref{lem: regular}. 

On the other hand, we show that $\BL A$ is regular. For this, by \cref{BL regular1}, it is sufficient to show that $R_{\sf BL}(V)$ is dense in $V$ for each $V\in{\sf L}(X)$. If $y_\infty\notin V$, then $V$ is a finite subset of $\{x_n : n\in\N \}\cup\{ y_n : n\in\N \}$, so $R(V)=V$, and hence $R_{\sf BL}(V)=V$. 

Suppose $y_\infty \in V$. Then cofinitely many $y_n$ are in $V$. We have that $z\in V$ or $z\notin V$. First let $z\in V$.
If $x_\infty \notin V$, then $V$ only contains finitely many $x_n$, so ${\sf int}\,{\downarrow}\,V = V$, and hence $R_{\sf BL}(V)=V$.
If $x_\infty\in V$, then $V$ is a downset, so $R(V)=V$, and thus $R_{\sf BL}(V)=V$.

Next let $z\notin V$. If $x_\infty \notin V$, then $V$ only contains finitely many $x_n$, so $R_{\sf BL}(V)$ consists of these finitely many $x_n$ and cofinitely many $y_n$, and hence $R_{\sf BL}(V)$ is dense in $V$. 
If $x_\infty\in V$, then cofinitely many $x_n$ are in $V$, so $R_{\sf BL}(V)$ consists of cofinitely many $x_n$ and cofinitely many $y_n$, and again $R_{\sf BL}(V)$ is dense in $V$. 
Thus, in all cases we have that $R_{\sf BL}(V)$ is dense in $V$.
\end{example}

We now introduce a stronger notion of regularity for $\BL A$
which is equivalent to $A$ being regular. This notion makes sense for any lattice $B$ that has $A$ as a bounded sublattice.

\begin{definition} 
Let $A$ be a bounded sublattice of $B$.
    \begin{enumerate}
        \item For $a\in A$ and $b\in  B$, $a$ is \emph{$A$-below} $b$, written $a\lhd b$, provided there is $c\in A$ such that $a\prec_A c \le b$. 
        \item We say that $B$ is {\em $A$-regular} if for each $b,d\in B$, from $b\not\le d$ it follows that there is $a \in  A$ with $a \lhd b$ and $a\not\le d$.
    \end{enumerate}
\end{definition}

\begin{remark}\ \label{rem: lhd vs rb}
\begin{enumerate}[label=\upshape(\arabic*), ref = \theremark(\arabic*)]
    \item Clearly, if $a,b \in A$ then $a \lhd b$ iff $a \rb_A b$, and $B$ is $A$-regular iff $b =\bigvee \{ a \in A : a \lhd b \}$ for each $b\in B$ (cf.~\cref{BL regular}). \label[remark]{rem: lhd vs rb 1}
    \item If $A$ is join-dense in $B$ then $B$ is $A$-regular iff the following condition holds: 
    \[
    \mbox{If $a\in A$, $b\in B$, and $a\not\le b$ then there is $c \in  A$ with $c \rb_A a$ and $c\not\le b$.}
    \]
\end{enumerate}
\end{remark}

\begin{proposition} \label{prop: strongly regular}
Let $A$ be a join-dense bounded sublattice of $B$. Then
    $A$ is regular iff $B$ is $A$-regular.
\end{proposition}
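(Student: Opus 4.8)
The plan is to prove both directions using the characterization in \cref{rem: lhd vs rb 1} that $B$ is $A$-regular iff $b = \bigvee\{a \in A : a \lhd b\}$ for each $b \in B$, together with the fact that $A$ is regular iff $a = \bigvee\{c \in A : c \rb_A a\}$ for each $a \in A$ (\cref{rem: pseudo and reg 2}, recalling that for $a, b \in A$ we have $a \lhd b$ iff $a \rb_A b$).

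First I would prove that $A$-regularity of $B$ implies regularity of $A$. Suppose $B$ is $A$-regular. Let $a \in A$; I must show $a = \bigvee_A\{c \in A : c \rb_A a\}$. Since $a \in A \subseteq B$ and $B$ is $A$-regular, $a = \bigvee_B\{c \in A : c \lhd a\}$. But for $c \in A$, $c \lhd a$ is the same as $c \rb_A a$, so $a = \bigvee_B\{c \in A : c \rb_A a\}$, i.e., $a$ is the join in $B$ of this subset of $A$. It remains to observe that this join, being equal to the element $a \in A$, is also the join computed in $A$: indeed $a$ is an upper bound of the set in $A$, and any $A$-upper bound is also a $B$-upper bound, hence $\ge a$. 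Therefore $a = \bigvee_A\{c \in A : c \rb_A a\}$, so $A$ is regular.

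For the converse, suppose $A$ is regular. Using \cref{rem: lhd vs rb}(2) (legitimate since $A$ is join-dense in $B$), it suffices to show: if $a \in A$, $b \in B$, and $a \not\le b$, then there is $c \in A$ with $c \rb_A a$ and $c \not\le b$. Since $A$ is regular, $a = \bigvee_A\{c \in A : c \rb_A a\}$. If every $c \in A$ with $c \rb_A a$ satisfied $c \le b$, then $b$ would be an upper bound in $B$ (hence the relevant join bound) for $\{c \in A : c \rb_A a\}$; but I need to be careful, as the join defining regularity of $A$ is taken in $A$, not in $B$. The point is that $a = \bigvee_A\{c : c \rb_A a\}$ need not equal $\bigvee_B\{c : c \rb_A a\}$ in general --- however, $A$ is join-dense in $B$, and one checks that a join-dense sublattice computes the same joins as $B$ whenever the $A$-join exists: if $s = \bigvee_A S$ exists and $t \in B$ is any upper bound of $S$, write $t = \bigvee_B\{a' \in A : a' \le t\}$; each such $a'$... this requires more thought. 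The cleaner route: since $A$ is join-dense in $B$, the inclusion $A \hookrightarrow B$ preserves existing joins (a standard fact, or appeal to \cref{BL regular}-style reasoning). Granting that, $\bigvee_B\{c \in A : c \rb_A a\} = a \not\le b$, so not all such $c$ are $\le b$; pick one with $c \not\le b$. Then $c \rb_A a$ and $c \not\le b$, as required.

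The main obstacle is the subtlety about whether joins in the sublattice $A$ agree with joins in $B$ when $A$ is only join-dense. This is where the hypothesis that $A$ is \emph{join-dense} (rather than merely a sublattice) is essential, and I expect the actual proof either to invoke this preservation-of-joins fact explicitly or, more likely, to sidestep it entirely by arguing directly with the inequality formulation of regularity (\cref{BL regular}, \cref{rem: lhd vs rb}) rather than with the join expressions --- tracking upper bounds in $B$ throughout, so that no join is ever asserted to live in $A$. Concretely: for the forward direction one uses that $B$ regular gives, for $a \not\le d$ in $A \subseteq B$, some $b \in B$ with $b \rb_B a$, $b \not\le d$; then join-density produces $c \in A$ with $c \le b$, $c \not\le d$; and the hypothesis (implicitly, that $\rb_A = \rb_B$ on $A$, or the weaker fact that $b \rb_B a$ with witnesses pushed down) upgrades $c \le b \rb_B a$ to $c \rb_A a$ --- but wait, that last step is exactly the extra assumption of \cref{prop: when reg is preserved}, which is \emph{not} assumed here; so in fact the definition of $A$-below builds in precisely the witness $c \in A$ we need. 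That is the crux: $\lhd$ is designed so that $a \lhd b$ directly supplies an $A$-internal rather-below witness, making the equivalence go through without the problematic assumption.
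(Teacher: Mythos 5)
Your reverse direction ($B$ being $A$-regular implies $A$ regular) is correct and is essentially the paper's argument: from $a=\bigvee_B\{c\in A: c\lhd a\}$ and $a\not\le b$ one extracts some $c\lhd a$ with $c\not\le b$, and $\lhd$ collapses to $\rb_A$ for elements of $A$; your extra observation that a $B$-join of a subset of $A$ which happens to lie in $A$ is also the $A$-join is fine.

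The forward direction is where the problem lies. You correctly isolate the one delicate point --- whether $a=\bigvee_A\{c\in A: c\rb_A a\}$ can be read as a join computed in $B$ --- but your proposed repair, that ``since $A$ is join-dense in $B$, the inclusion $A\hookrightarrow B$ preserves existing joins (a standard fact),'' is not a standard fact and is false in general: it is \emph{meet}-density that forces an order embedding to preserve existing joins (dually, join-density preserves existing meets), and a merely join-dense sublattice can compute a strictly larger join than the ambient lattice does. Since the remainder of your forward direction is explicitly conditional on this (``Granting that\dots''), the implication from regularity of $A$ to $A$-regularity of $B$ is not actually established in your write-up. For comparison, the paper's proof simply chains joins: $b=\bigvee\{a\in A: a\le b\}$ by join-density, $a=\bigvee\{c\in A: c\rb_A a\}$ by regularity of $A$, and $c\rb_A a\le b$ gives $c\lhd b$, whence $b=\bigvee\{c\in A: c\lhd b\}$; it does not route the argument through a general join-preservation principle. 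Finally, your closing paragraph analyzes the wrong implication: it discusses passing from regularity of $B$ to regularity of $A$ (the subject of \cref{prop: when reg is preserved}), whereas the forward direction here goes from regularity of $A$ to $A$-regularity of $B$ and never involves $\rb_B$ at all; the observation that $\lhd$ ``builds in'' an $A$-internal rather-below witness is the right intuition for the reverse direction you already proved, not a substitute for the missing forward argument.
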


\begin{proof}
    Suppose $A$ is regular and $b\in B$. Since $A$ is join-dense in $B$, $b=\bigvee\{a \in A : a \le b\}$.  But $A$ is regular so $a=\bigvee\{ c\in A : c \prec_A a \}$ for each $a\in A$. From $c\rb_A a\le b$ it follows that $c\lhd b$. Therefore, $b=\bigvee\{c\in A : c\lhd b\}$, and thus $B$ is $A$-regular. 
    
    Conversely, suppose $B$ is $A$-regular and $a,b\in A$ with $a\not\le b$. Since $a=\bigvee\{ c\in A : c\lhd a \}$, there is $c\in A$ such that $c\lhd a$ and $c\not\le b$. Therefore, $A$ is regular by \cref{rem: lhd vs rb 1}. 
\end{proof}

Since $A\in\DLat$ is join-dense in both $\pH A$ and $\BL A$, we conclude:

\begin{corollary}
    For $A\in\DLat$, the following are equivalent:
    \begin{enumerate}[label=\upshape(\arabic*), ref = \thecorollary(\arabic*)]
        \item $A$ is regular.
        \item $\pH A$ is $A$-regular.
        \item $\BL A$ is $A$-regular.
    \end{enumerate}
\end{corollary}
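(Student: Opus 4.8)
The plan is to deduce the whole statement from \cref{prop: strongly regular}, which asserts that whenever $A$ is a join-dense bounded sublattice of a lattice $B$, then $A$ is regular if and only if $B$ is $A$-regular. Thus the argument reduces to checking that $A$ is join-dense in each of $\pH A$ and $\BL A$, and then invoking that proposition twice.

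First I would recall that $A$ is join-dense in $\BL A$: this is part of the Bruns--Lakser construction, since the injective hull of a $\wedge$-semilattice $S$ is a frame in which $S$ is join-dense (see \cref{sec: prelims}). Next I would observe that $A$ is join-dense in $\pH A$ as well. Indeed, given $x\in\pH A\subseteq\BL A$, join-density of $A$ in $\BL A$ supplies $S\subseteq A$ with $x=\bigvee_{\BL A}S$; but every upper bound of $S$ in $\pH A$ is an upper bound of $S$ in $\BL A$ and hence dominates $x$, so $x=\bigvee_{\pH A}S$. (This is the same observation used in the proof of the earlier proposition on when $\pH A=\BL A$, and it is exactly the join-density asserted in the sentence preceding the corollary.)

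With join-density in place, the remainder is purely formal: applying \cref{prop: strongly regular} with $B=\pH A$ gives $(1)\Leftrightarrow(2)$, and applying it with $B=\BL A$ gives $(1)\Leftrightarrow(3)$; chaining these yields the three-way equivalence. I do not expect a genuine obstacle here, as all the substantive work has already been done --- in \cref{prop: strongly regular} itself, and in \cref{prop: rb in BL and pH}, which (via \cref{BL regular} and \cref{thm: BL A reg = pH A reg}) secures the compatibility of the rather-below relations $\rb_A$, $\rb_{\pH A}$, and $\rb_{\BL A}$ on $A$. The one point requiring a little care is to interpret $A$-regularity and the relation $\lhd$ with respect to the correct ambient lattice in each of the two applications, since both notions are defined relative to a chosen overlattice $B$.
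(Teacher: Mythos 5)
Your proposal is correct and matches the paper's proof exactly: the corollary is obtained by applying \cref{prop: strongly regular} to $B=\pH A$ and $B=\BL A$, using only that $A$ is join-dense in both. (The appeal to \cref{prop: rb in BL and pH} is unnecessary, since $\lhd$ and $A$-regularity are defined solely in terms of $\rb_A$, but this does not affect correctness.)
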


As another consequence, we obtain:

\begin{corollary}
    If $\BL A$ (resp.~$\pH A$) is $A$-regular, then $\BL A$ (resp.~$\pH A$) is regular, but not conversely.
\end{corollary}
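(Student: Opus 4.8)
The plan is to reduce both assertions to facts already established about regularity of $A$, $\BL A$, and $\pH A$. The crucial point is \cref{prop: strongly regular}: since $A$ is a join-dense bounded sublattice of both $\BL A$ and $\pH A$, the statement ``$\BL A$ is $A$-regular'' is \emph{equivalent} to ``$A$ is regular'', and likewise ``$\pH A$ is $A$-regular'' is equivalent to ``$A$ is regular''. Thus the corollary amounts to comparing regularity of $A$ with regularity of $\BL A$ (resp.~$\pH A$), which \cref{thm: BL preserves regular,thm: BL A reg = pH A reg} already handle.

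For the forward implications: if $\BL A$ is $A$-regular, then $A$ is regular by \cref{prop: strongly regular}, hence $\BL A$ is regular by \cref{thm: BL preserves regular a}. If $\pH A$ is $A$-regular, then $A$ is regular by \cref{prop: strongly regular}, so $\BL A$ is regular by \cref{thm: BL preserves regular a}, and therefore $\pH A$ is regular by \cref{thm: BL A reg = pH A reg}.

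For the failure of the converse, I would reuse an existing counterexample witnessing that $\BL A$ can be regular while $A$ is not: the lattice $A$ of finite subsets of $\N$ together with $\N$ from the proof of \cref{thm: BL preserves regular b}, whose Bruns-Lakser completion is $\mathcal P(\N)$ (Boolean, hence regular) although $A$ is not subfit, or the subfit example of \cref{ex: BL A reg A not reg}. For such $A$, $\BL A$ is regular, and $\pH A$ is regular by \cref{thm: BL A reg = pH A reg}, but neither is $A$-regular, since by \cref{prop: strongly regular} $A$-regularity of either would force $A$ to be regular. There is essentially no obstacle: the proof is an assembly of \cref{prop: strongly regular}, \cref{thm: BL preserves regular a}, \cref{thm: BL A reg = pH A reg}, and the counterexamples; the only thing to watch is applying \cref{prop: strongly regular} in the right direction, so that non-regularity of $A$ directly yields non-$A$-regularity of the completion.
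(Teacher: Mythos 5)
Your proposal is correct and follows essentially the same route as the paper: apply \cref{prop: strongly regular} to get regularity of $A$ from $A$-regularity of the completion, then \cref{thm: BL preserves regular a} (and \cref{thm: BL A reg = pH A reg} for the $\pH A$ case), and refute the converse with \cref{ex: BL A reg A not reg} via \cref{prop: strongly regular}. The only cosmetic difference is that you spell out the $\pH A$ case and offer an extra counterexample, both of which are fine.
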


\begin{proof}
    If $\BL A$ is $A$-regular, then $A$ is regular by \cref{prop: strongly regular}, and hence $\BL A$ is regular by \cref{thm: BL preserves regular a}. On the other hand, let $A$ be as in \cref{ex: BL A reg A not reg}. Then $\BL A$ is regular, but $A$ is not regular. Therefore, $\BL A$ is not $A$-regular by \cref{prop: strongly regular}.
    \end{proof}

The remainder of this section focuses on characterizing when the ideal and canonical completions are regular. For this, we  recall some definitions and results from frame theory. For any frame $L$:
\begin{itemize}
    \item an element $a \in L$ is \emph{compact} if $a\le\bigvee S$ implies $a\le\bigvee T$ for some finite $T\subseteq S$;
    \item \cite[p.~63]{johnstone_stone_1982} $L$ is \emph{coherent} if the set $K(L)$ of compact elements is join-dense and a bounded sublattice of $L$;
    \item \cite{banaschewski_universal_1989} $L$ is \emph{Stone} if $L$ is coherent and $K(L)$ is a Boolean algebra;
    \item A coherent frame is Stone iff it is regular.\footnote{This was proved by Banaschewski in a seminar series at the University of Cape Town in 1988. For a proof, see \cite[Prop.~1.22]{walters_uniform_1989}, where the result is shown for $\sigma$-frames (an analogous argument works for frames).}
\end{itemize}

Recall that, up to isomorphism, coherent frames are exactly the frames of ideals of bounded distributive lattices, which in turn are realized as the compact elements of coherent frames (\cite[p.~64]{johnstone_stone_1982}). Thus, we obtain: 

\begin{proposition}\label{I-reg is Boolean}
    For $A\in{\DLat}$, $\I A$ is regular iff $A$ is Boolean.
\end{proposition}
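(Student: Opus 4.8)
The plan is to combine the two facts recalled just before the proposition: that coherent frames are, up to isomorphism, exactly the ideal frames $\I A$ of bounded distributive lattices $A$ (with $A \cong K(\I A)$), and that a coherent frame is regular iff it is Stone iff its lattice of compact elements is a Boolean algebra. First I would note that $\I A$ is a coherent frame, with $K(\I A)$ being the set of principal ideals, which is a bounded sublattice of $\I A$ isomorphic to $A$ itself (the compact ideals of $A$ are precisely the principal ones, since $\bigvee$ of a directed family of ideals is their union, and a principal ideal ${\downarrow}a$ contained in such a union must already be contained in one member). So $K(\I A) \cong A$ in $\DLat$.

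Given this identification, the chain of equivalences is immediate: $\I A$ is regular iff $\I A$ is Stone (by the cited fact that a coherent frame is regular iff Stone) iff $K(\I A)$ is a Boolean algebra iff $A$ is a Boolean algebra. I would write this out as a short displayed chain of "iff"s, citing \cite[p.~63]{johnstone_stone_1982} for coherence of $\I A$ and the identification of compact elements, and the Banaschewski result (the footnote pointing to \cite[Prop.~1.22]{walters_uniform_1989}) for the regular-iff-Stone step. One small point to make explicit is that the isomorphism $A \cong K(\I A)$ is a lattice isomorphism, so $A$ is Boolean iff $K(\I A)$ is; this is where one uses that Booleanness is a lattice-theoretic (indeed first-order) property preserved and reflected by lattice isomorphisms.

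There is essentially no hard part here — the proposition is a direct corollary of the imported frame-theoretic facts, and the only thing requiring a sentence of justification is the identification of the compact elements of $\I A$ with the principal ideals (hence with $A$). The proof is therefore quite short. For completeness one could alternatively give a self-contained argument: if $A$ is Boolean then every ideal $I$ of $A$ satisfies $I = \bigvee\{{\downarrow}a : a \in I\}$ with each ${\downarrow}a$ complemented in $\I A$ (its complement being ${\downarrow}\lnot a$), so $\I A$ is a complete Boolean algebra, hence regular; conversely, if $\I A$ is regular then it is subfit, and one could invoke a sharper analysis — but since Booleanness of $\I A$ was already announced in the introduction to require $A$ finite Boolean, it is cleaner to stay with the ``regular $\ne$ Boolean'' route via Stone frames rather than try to push subfitness arguments. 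I would keep the proof to the two-line citation-based argument.

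\begin{proof}
    Recall that $\I A$ is a coherent frame in which the compact elements are exactly the principal ideals: indeed, the join of a directed family of ideals is their union, so if ${\downarrow}a\le\bigvee_{i}I_i$ then $a\in\bigcup_i I_i$, hence $a\in I_j$ for some $j$, giving ${\downarrow}a\le I_j$; and conversely any ideal is the join of the principal ideals it contains. Thus $K(\I A)$, the bounded sublattice of principal ideals, is isomorphic to $A$ via $a\mapsto{\downarrow}a$. Since a coherent frame is regular iff it is Stone, i.e.\ iff its lattice of compact elements is a Boolean algebra, we conclude that $\I A$ is regular iff $K(\I A)$ is Boolean iff $A$ is Boolean.
\end{proof}
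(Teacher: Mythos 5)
Your proof is correct and follows essentially the same route as the paper's: both identify $K(\I A)\cong A$ and invoke the fact that a coherent frame is regular iff it is Stone iff its compact elements form a Boolean algebra. Your added justification that the compact ideals are exactly the principal ones is a harmless (and welcome) extra detail the paper leaves implicit.
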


\begin{proof}
    First suppose that $\I A$ is regular. Since $\I A$ is coherent, it follows that $\I A$ is Stone, hence the compact elements of $\I A$ form a Boolean algebra. But $K(\I A)$ is isomorphic to $A$, so $A$ is Boolean. Conversely, if $A$ is Boolean then it is well known that $\I A$ is Stone \cite{banaschewski_universal_1989}, hence regular.
\end{proof}
Transitioning to the canonical completion, since regularity implies subfitness for distributive lattices, one implication of the next proposition is a consequence of \cref{prop: canonical subfit iff boolean}, while the other follows from the fact that $A$ is Boolean iff  $A\!^\sigma$ is Boolean (see \cref{prop: Asigma boolean}):
\begin{proposition} \label{can-reg is Boolean}
    For $A\in{\DLat}$, $A\!^\sigma$ is regular iff $A$ is Boolean.
\end{proposition}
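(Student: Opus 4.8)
The plan is to derive both implications from results already established, so that the argument is a short corollary in each direction.

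For the ``if'' direction, I would argue as follows. Suppose $A$ is Boolean. By \cref{prop: Asigma boolean}, which gives that $A$ is Boolean iff $A\!^\sigma$ is Boolean, the lattice $A\!^\sigma$ is Boolean, and every Boolean lattice is regular (see \cref{rem: pseudo and reg 2}). Hence $A\!^\sigma$ is regular.

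For the ``only if'' direction, recall from \cref{lem: char of M and Dinfty-0} that $A\!^\sigma \cong {\sf Up}(X)$, so in particular $A\!^\sigma$ is distributive. Since every regular distributive lattice is subfit (\cite[p.~360]{ball_lindelof_2017}), regularity of $A\!^\sigma$ yields subfitness of $A\!^\sigma$, and then \cref{prop: canonical subfit iff boolean} forces $A$ to be Boolean.

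I do not expect any genuine obstacle here: the substantive content has already been packaged into \cref{prop: canonical subfit iff boolean} and \cref{prop: Asigma boolean}, and the present statement merely combines them through the implication ``regular $\Rightarrow$ subfit'' for distributive lattices. If one preferred a self-contained argument for the contrapositive of ``only if'' that bypasses subfitness, one could instead work directly in ${\sf Up}(X)$: its pseudocomplement is $U^* = X \setminus {\downarrow}U$, so $U \rb V$ iff ${\downarrow}U \subseteq V$; if $A$ is not Boolean the order on $X$ is nontrivial, so some $x\in X$ is not minimal, and then no upset $U$ with $x\in U$ satisfies ${\downarrow}U \subseteq {\uparrow}x$ (a point strictly below $x$ lies in ${\downarrow}U \setminus {\uparrow}x$), whence ${\uparrow}x$ is not a join of elements rather-below it and ${\sf Up}(X)$ fails to be regular. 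This argument, however, essentially re-proves \cref{prop: canonical subfit iff boolean} and is not needed.
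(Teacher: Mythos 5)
Your proposal is correct and follows exactly the paper's own argument: the paper also derives the ``only if'' direction from regularity implying subfitness for distributive lattices together with \cref{prop: canonical subfit iff boolean}, and the ``if'' direction from \cref{prop: Asigma boolean} plus the fact that Boolean lattices are regular. The optional direct argument in ${\sf Up}(X)$ is a fine sanity check but, as you note, unnecessary.
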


The situation summarizes as follows: 

\begin{summary}
    Let $A\in\DLat$.
    \begin{enumerate}
    \item If $\M A$ is distributive, then $\M A$ is regular iff $\BL A$ is regular and $A$ is subfit.
     \item $\BL A$ (resp.~$\pH A$) is regular iff $R_{\sf BL}(U)$ is dense in $U$ for each $U \in {\sf L}(X)$.  
      \item $\BL A$ (resp.~$\pH A$) is $A$-regular iff $A$ is regular iff $R(U)$ is dense in $U$ for each $U \in {\sf L}(X)$. 
     \item $\I A$ is regular iff $A\!^\sigma$ is regular iff $A$ is Boolean iff $R(U)=U$ for each $U \in {\sf L}(X)$.\footnote{For the last equivalence it is sufficient to observe that $R(U)=U$ for each $U \in {\sf L}(X)$ iff each $U$ is a complemented element of ${\sf L}(X)$. }
        \end{enumerate}
\end{summary}

\section{Booleanness}
We finally concentrate on when the four completions are Boolean. Janowitz \cite[Thm.~3.11]{janowitz_section_1968} gives a characterization for the Dedekind-MacNeille completion to be Boolean. We obtain his result as a corollary to our characterization of when the Bruns-Lakser completion is Boolean. This requires using the notion of $\wedge$-subfitness (see the introduction). 
Since $A$ is $\wedge$-subfit iff its order-dual is $\join$-subfit, \cref{prop: char of subfit} immediately yields: 

\begin{samepage}
\begin{proposition} \label{prop: char of cosubfit}
For $A\in{\DLat}$ and $X$ its Priestley dual, the following are equivalent:
    \begin{enumerate}[label=\upshape(\arabic*), ref = \theproposition(\arabic*)]
        \item $A$ is $\wedge$-subfit.
        \item $a\not\le b$ implies there is a maximal filter $F$ with $a \in F$ and $b \notin F$. \label[proposition]{prop: char of cosubfit 2}
        \item Every principal filter is an intersection of maximal filters. \label[proposition]{prop: char of cosubfit 2a}
       \item $\max X$ is dense in $X$. \label[proposition]{prop: char of cosubfit 3}
    \end{enumerate}
\end{proposition}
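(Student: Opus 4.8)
The plan is to derive all four equivalences by applying \cref{prop: char of subfit} to the order-dual lattice. Write $A^{\partial}$ for the bounded distributive lattice obtained from $A$ by reversing the order; it again belongs to $\DLat$. Comparing the two displayed defining conditions in the introduction, reversing $\le$ interchanges $\wedge$ with $\vee$ and $0$ with $1$, so after renaming the bound variables the implication defining $\wedge$-subfitness of $A$ is, verbatim, the implication defining $\vee$-subfitness of $A^{\partial}$. Hence $A$ is $\wedge$-subfit iff $A^{\partial}$ is $\vee$-subfit, which identifies condition (1) with (ordinary) subfitness of $A^{\partial}$.

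Next I would record how Priestley duality interacts with order-duality. If $X$ is the Priestley space of $A$, let $X^{\partial}$ denote the same compact space equipped with the reversed order. The Priestley separation axiom is self-dual — complementing a clopen upset produces a clopen downset — so $X^{\partial}$ is again a Priestley space, the map $U\mapsto X\setminus U$ is a lattice anti-isomorphism between the clopen upsets of $X$ and those of $X^{\partial}$ (so ${\sf L}(X^{\partial})\cong A^{\partial}$), and a routine check of the defining subbasis shows that the prime filters of $A^{\partial}$ ordered by inclusion, with their Stone topology, are order-homeomorphic to $X^{\partial}$. Thus $X^{\partial}$ is the Priestley dual of $A^{\partial}$. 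Under this identification, ideals of $A^{\partial}$ are exactly the filters of $A$, maximal ideals of $A^{\partial}$ are the maximal filters of $A$, principal ideals of $A^{\partial}$ are the principal filters of $A$, and — since passing to the opposite order moves no points — $\min X^{\partial}$ coincides with $\max X$ as a subset of the common underlying space, so $\min X^{\partial}$ is dense in $X^{\partial}$ iff $\max X$ is dense in $X$.

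It then remains only to read off \cref{prop: char of subfit} for the pair $(A^{\partial}, X^{\partial})$ and transcribe. Its clause (1) is subfitness of $A^{\partial}$, i.e. $\wedge$-subfitness of $A$, which is our (1). Its clause \cref{prop: char of subfit 2}, after applying the dictionary above and swapping the names of the two elements involved (order-duality turns ``$a\not\le b$'' into ``$b\not\le a$''), becomes exactly \cref{prop: char of cosubfit 2}; clause \cref{prop: char of subfit 2a} becomes \cref{prop: char of cosubfit 2a}; and clause \cref{prop: char of subfit 3}, density of $\min X^{\partial}$ in $X^{\partial}$, becomes \cref{prop: char of cosubfit 3}. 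Since \cref{prop: char of subfit} asserts these four are equivalent, so are ours. I do not anticipate a genuine obstacle: the only two points needing care are confirming that $X^{\partial}$ really is the Priestley dual of $A^{\partial}$ (the short check of the separation axiom, of the clopen-upset lattice, and of the subbasis of $\mathcal{S}(A^{\partial})$) and keeping the variable bookkeeping straight when transcribing clause \cref{prop: char of subfit 2}; everything else is automatic.
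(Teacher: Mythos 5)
Your proposal is correct and is exactly the paper's argument: the paper derives this proposition from \cref{prop: char of subfit} by the one-line observation that $A$ is $\wedge$-subfit iff its order-dual is $\vee$-subfit, leaving the dualization dictionary (filters vs.\ ideals, $\max X$ vs.\ $\min X^{\partial}$) implicit where you spell it out. No further comment needed.
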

\end{samepage}

\begin{corollary}\label{cosubfit is Boolean}
Let $A\in{\DLat}$ with Priestley space $X$.
\begin{enumerate}[label=\upshape(\arabic*), ref = \thecorollary(\arabic*)]
    \item If  $\max X$ is closed, then $A$ is $\wedge$-subfit iff  $A$ is Boolean.
\item If $A$ is a frame, then $A$ is $\wedge$-subfit iff  $A$ is Boolean.\label[corollary]{cosubfit is Boolean 2}
    \end{enumerate}
\end{corollary}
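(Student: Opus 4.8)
The plan is to treat the two parts separately. Part~(1) falls out of \cref{prop: char of cosubfit} with essentially no work, and part~(2) is proved by a short direct computation with pseudocomplements; I would deliberately \emph{not} try to deduce (2) from (1), since the Priestley dual of a frame need not have $\max X$ closed.

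For~(1), assume $\max X$ is closed. If $A$ is $\wedge$-subfit, then $\max X$ is dense in $X$ by \cref{prop: char of cosubfit 3}; a closed dense subset of any space is the whole space, so $\max X = X$. Thus every point of $X$ is maximal, which forces the order on $X$ to be trivial, and therefore $A\cong{\sf L}(X)={\sf Clop}(X)$ is Boolean (this is the same observation used in the proof of \cref{prop: canonical subfit iff boolean}; see also \cite[p.~119]{gratzer_lattice_2011}). For the converse, no hypothesis on $X$ is needed: every Boolean lattice is $\wedge$-subfit, since if $a\not\le b$ we may take $c$ to be the complement of $b$, giving $b\wedge c=0$ while $a\wedge c\ne 0$ (otherwise $a\le b$).

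For~(2), recall that any frame is pseudocomplemented: $b^{*}:=\bigvee\{x\in A:x\wedge b=0\}$ satisfies $b\wedge b^{*}=0$ (by the frame distributive law) and $x\wedge b=0\Rightarrow x\le b^{*}$. The backward implication is the general fact recorded in the previous paragraph. For the forward implication, suppose $A$ is $\wedge$-subfit but not Boolean, and pick $b\in A$ with $b\vee b^{*}\ne 1$. Since $1\not\le b\vee b^{*}$, $\wedge$-subfitness yields $c\ne 0$ with $(b\vee b^{*})\wedge c=0$, hence by distributivity $(b\wedge c)\vee(b^{*}\wedge c)=0$, so $b\wedge c=0$ and $b^{*}\wedge c=0$. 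The first equality gives $c\le b^{*}$, and then $c=c\wedge b^{*}=0$, contradicting $c\ne 0$. Therefore $b\vee b^{*}=1$ for every $b$, i.e.\ $b^{*}$ is a complement of $b$, and $A$ is Boolean.

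I do not foresee a real obstacle: the computation in~(2) is two lines, and~(1) is immediate once one invokes density of $\max X$. The only point that deserves attention is the route taken for~(2): it is tempting to argue ``frame $\Rightarrow \max X$ closed $\Rightarrow$ apply~(1)'', but the first implication fails in general, so the pseudocomplement argument above is the right one. It also shows, as a byproduct, that the frame hypothesis is used only through pseudocomplementation together with binary distributivity, so the statement holds verbatim for any pseudocomplemented bounded distributive lattice.
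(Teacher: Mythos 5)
Both directions of part~(1) match the paper's argument exactly. For part~(2) you take a genuinely different route: the paper deduces (2) from (1) by citing the fact that the Priestley dual of a frame (indeed, of any Heyting algebra) is an Esakia space in which $\max X$ is closed \cite[p.~47]{esakia_heyting_2019}, whereas you give a direct, duality-free computation with pseudocomplements showing that a $\wedge$-subfit pseudocomplemented distributive lattice is Boolean. Your computation is correct, is choice-free, and yields the stronger statement that the frame hypothesis can be weakened to pseudocomplementation plus binary distributivity --- a real gain over the paper's formulation. However, your stated reason for rejecting the paper's route is wrong: you assert that the Priestley dual of a frame need not have $\max X$ closed, but it always does. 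Every frame is a Heyting algebra, and in an Esakia space $\max X$ is closed: if $x$ is not maximal, choose $y>x$ and a clopen upset $U$ with $y\in U$, $x\notin U$; then ${\downarrow}U$ is clopen (the Esakia property), and ${\downarrow}U\setminus U$ is a clopen neighbourhood of $x$ disjoint from $\max X$, since any $z\in{\downarrow}U\setminus U$ lies strictly below some point of $U$. So the implication ``frame $\Rightarrow\max X$ closed $\Rightarrow$ apply~(1)'' is exactly the paper's (valid) proof; your alternative is not forced, merely more general. The false side remark does not affect the correctness of your argument, but it should be deleted or corrected.
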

\begin{proof}
    (1) Suppose $A$ is $\wedge$-subfit. By \cref{prop: char of cosubfit}, $\max X$ is dense in $X$. But $\max X$ is closed, so $\max X =X$, and hence 
$A$ is Boolean (see, e.g., \cite[p.~119]{gratzer_lattice_2011}). The other implication is always true.

(2) If $A$ is a frame then $\max X$ is closed (see, e.g., \cite[p.~47]{esakia_heyting_2019}) and hence (1) applies.
\end{proof}
We utilize the above two results to characterize when the Bruns-Lakser completion of a distributive lattice is Boolean.

\begin{theorem} \label{thm: BL boolean}
    For $A\in{\DLat}$, the following are equivalent.
    \begin{enumerate}[label=\upshape(\arabic*), ref = \thetheorem(\arabic*)]
        \item $\BL A$ is Boolean.
        \item $A$ is $\wedge$-subfit.
        \item $\pH A$ is Boolean.
        \end{enumerate} 
\end{theorem}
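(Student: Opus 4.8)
The plan is to prove the cycle $(1)\Rightarrow(2)\Rightarrow(3)\Rightarrow(1)$. Throughout I would work through Priestley duality, identifying $\BL A$ with ${\sf BL}(X)$ and $\pH A$ with ${\sf pH}(X)$ via \cref{lem: char of M and Dinfty-3,prop: pH A = pH X}, and $A$ with ${\sf L}(X)$, so that by \cref{prop: char of cosubfit} the statement that $A$ is $\wedge$-subfit becomes the statement that $\max X$ is dense in $X$. Two of the three implications are then short. For $(1)\Rightarrow(2)$: being Boolean, $\BL A$ is in particular $\wedge$-subfit (in any Boolean lattice $a\not\le b$ gives $a\wedge b'\ne 0 = b\wedge b'$ for $b'$ the complement of $b$), and since $A$ is join-dense in $\BL A$ the order-dual of \cref{thm: Janowitz}(1) pushes $\wedge$-subfitness down to $A$. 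For $(3)\Rightarrow(1)$: if $\pH A$ is Boolean it is $\wedge$-subfit, so $\M(\pH A)$ is $\wedge$-subfit by the order-dual of \cref{thm: MA subfit implies A subfit} (using $\M(B^{\mathrm{op}})=(\M B)^{\mathrm{op}}$); since $\M(\pH A)\cong\BL A$ is a frame by \cref{prop: M of pH A}, \cref{cosubfit is Boolean 2} makes $\BL A$ Boolean.

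The substantive implication is $(2)\Rightarrow(3)$, whose engine is the lemma: \emph{if $\max X$ is dense in $X$, then ${\sf int}({\downarrow}K)=K$ for every clopen $K\subseteq X$.} One inclusion is clear since $K$ is open and $K\subseteq{\downarrow}K$; for the other, if the open set ${\sf int}({\downarrow}K)\setminus K$ were nonempty it would contain a maximal point $m$ by density of $\max X$, but $m\in{\downarrow}K$ forces $m\le k$ for some $k\in K$, hence $m=k\in K$ by maximality of $m$ --- a contradiction. Granting the lemma, recall that ${\sf pH}(X)$ is the bounded sublattice of ${\sf BL}(X)$ generated by the sets $X\setminus{\downarrow}K$ with $K$ clopen (\cref{prop: pH A = pH X} and the definition of ${\sf pH}(X)$). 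I would then check that each generator is a complemented element of ${\sf pH}(X)$, with complement $X\setminus{\downarrow}(X\setminus K)$ (again a generator, since $X\setminus K$ is clopen): their intersection is $X\setminus{\downarrow}X=\varnothing$, while their join in ${\sf BL}(X)$ is ${\sf int_1 cl}\big(X\setminus({\downarrow}K\cap{\downarrow}(X\setminus K))\big)=X$, because ${\sf int}\big({\downarrow}K\cap{\downarrow}(X\setminus K)\big)\subseteq{\sf int}({\downarrow}K)\cap{\sf int}({\downarrow}(X\setminus K))=K\cap(X\setminus K)=\varnothing$ by the lemma, so $X\setminus({\downarrow}K\cap{\downarrow}(X\setminus K))$ is dense. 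Since the complemented elements of a distributive lattice form a sublattice and ${\sf pH}(X)$ is generated by complemented elements whose complements again lie in ${\sf pH}(X)$, every element of ${\sf pH}(X)$ is complemented; that is, $\pH A\cong{\sf pH}(X)$ is Boolean.

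I expect the lemma ${\sf int}({\downarrow}K)=K$ to be the one genuinely new ingredient, and proving it the main obstacle; granting it, $(2)\Rightarrow(3)$ reduces to the explicit complement formula plus the fact that complemented elements of a distributive lattice form a sublattice, and $(1)\Rightarrow(2)$, $(3)\Rightarrow(1)$ are assembly of \cref{thm: Janowitz,thm: MA subfit implies A subfit,prop: M of pH A,cosubfit is Boolean 2,prop: char of cosubfit} together with the trivial facts that Boolean lattices are $\wedge$-subfit and that $\M(\cdot)$ commutes with order-duality. The same lemma also gives directly that $\BL A\cong{\sf BL}(X)$ is Boolean --- the frame pseudocomplement $\neg U$ of $U\in{\sf BL}(X)$ contains the open upset $X\setminus{\downarrow}{\sf cl}(U)$, and a density-of-$\max X$ argument shows $U\cup(X\setminus{\downarrow}{\sf cl}(U))$ is dense, so $U\vee\neg U=X$ --- which I would record as a remark. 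Finally, combining this theorem with \cref{thm: MA frame,prop: subfit implies proH,thm: MA subfit implies A subfit} recovers Janowitz's characterization of when $\M A$ is Boolean.
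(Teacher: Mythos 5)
Your proof is correct and follows essentially the same route as the paper's: the heart of $(2)\Rightarrow(3)$ is the identical explicit complement $X\setminus{\downarrow}(X\setminus K)$ for the generator $X\setminus{\downarrow}K$ of ${\sf pH}(X)$ (the paper writes $K=U\setminus V$ and $X\setminus K=U^c\cup V$), with the join being all of $X$ because $\max X$ is dense --- your lemma ${\sf int}({\downarrow}K)=K$ is a repackaging of the paper's observation that $\max X\subseteq[X\setminus{\downarrow}K]\cup[X\setminus{\downarrow}(X\setminus K)]$. The remaining differences are cosmetic: for $(1)\Rightarrow(2)$ the paper argues directly with frame distributivity rather than dualizing \cref{thm: Janowitz}, and for $(3)\Rightarrow(1)$ it simply cites that the Dedekind-MacNeille completion of a Boolean algebra is Boolean rather than routing through $\wedge$-subfitness and \cref{cosubfit is Boolean 2}.
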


\begin{proof}
    (1)$\Rightarrow$(2): Let $a,b\in A$ with $a\not\le b$. Since $\BL A$ is Boolean, it is $\wedge$-subfit, so there is $u\in \BL A$ such that $b\wedge u=0$ and $a\wedge u\ne 0$. Because $A$ is join-dense in $\BL A$, $u=\bigvee S$ for some $S\subseteq A$. Therefore, since $\BL A$ is a frame, $b\wedge s=0$ for each $s\in S$ and there is $t\in S$ with $a\wedge t\ne 0$. Thus, $b\wedge t=0$ but $a\wedge t\ne 0$, and hence $A$ is $\wedge$-subfit.

    (2)$\Rightarrow$(3): Since $\pH A$ is generated as a distributive lattice by relative annihilators, it is enough to show that each relative annihilator has a complement in $\pH A$. By \cref{lem: annih char 1,prop: pH A = pH X}, it is sufficient to show that $X\setminus{\downarrow}(U\setminus V)$ has a complement in ${\sf pH}(X)$ for any clopen upsets $U,V$ of $X$. Specifically, we show that $X\setminus{\downarrow}(U^c\cup V)$ is the complement of $X\setminus{\downarrow}(U\setminus V)$ in ${\sf pH}(X)$. Since $U^c\cup V$ is clopen, $X\setminus{\downarrow}(U^c\cup V) \in {\sf pH}(X)$ by \cref{lem: annih char 2}. Moreover, it is straightforward to see that  
    \[
    [X\setminus{\downarrow}(U\setminus V)] \cap [X\setminus{\downarrow}(U^c\cup V)] = \varnothing.
    \]
    Furthermore, since $A$ is $\wedge$-subfit, $\max X$ is dense in $X$ by \cref{prop: char of cosubfit}. But 
    \[
    \max X \subseteq [X\setminus{\downarrow}(U\setminus V)] \cup [X\setminus{\downarrow}(U^c\cup V)],
    \]
    so $[X\setminus{\downarrow}(U\setminus V)] \cup [X\setminus{\downarrow}(U^c\cup V)]$ is dense in $X$, and hence 
    \[
    [X\setminus{\downarrow}(U\setminus V)] \vee_{{\sf pH}(X)} [X\setminus{\downarrow}(U^c\cup V)] = X.
    \]
    Thus, $X\setminus{\downarrow}(U\setminus V)$ has a complement in ${\sf pH}(X)$, finishing the proof that 
   $\pH A$ is Boolean.

    (3)$\Rightarrow$(1): By \cref{prop: M of pH A}, $\BL A \cong \M(\pH A)$. Therefore, $\pH A$ Boolean implies that so is $\BL A$ because the Dedekind-MacNeille completion of a Boolean algebra is a Boolean algebra (see, e.g., \cite[p.~239]{balbes_distributive_1974}).
\end{proof}

As a consequence, we obtain: 

\begin{corollary} \cite[Thm.~3.11]{janowitz_section_1968}
    For $A\in{\DLat}$, $\M A$ is Boolean iff $A$ is $\vee$-subfit and $\wedge$-subfit.
\end{corollary}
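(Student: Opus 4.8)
The plan is to deduce the corollary from the characterization of Booleanness of $\BL A$ in \cref{thm: BL boolean}, together with the interplay between $\M A$, $\BL A$, and proHeytingness. The key point is that in every situation relevant here $A$ turns out to be proHeyting, so that $\M A$ and $\BL A$ coincide and \cref{thm: BL boolean} can be brought to bear.

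First I would handle the forward direction. Suppose $\M A$ is Boolean. Then $\M A$ is in particular distributive, hence a frame, so \cref{thm: MA frame} gives that $A$ is proHeyting and $\M A \cong \BL A$. A Boolean algebra is both $\vee$-subfit and $\wedge$-subfit; thus $\BL A$ is $\wedge$-subfit, and \cref{thm: BL boolean} yields that $A$ is $\wedge$-subfit. Moreover $\BL A \cong \M A$ is $\vee$-subfit, so $A$ is $\vee$-subfit by \cref{thm: MA subfit implies A subfit}.

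For the converse, suppose $A$ is both $\vee$-subfit and $\wedge$-subfit. Since $A$ is $\vee$-subfit (that is, subfit), \cref{prop: subfit implies proH} gives that $A$ is proHeyting, and hence \cref{thm: MA frame} yields $\M A \cong \BL A$ (in particular $\M A$ is a frame). Since $A$ is $\wedge$-subfit, \cref{thm: BL boolean} tells us $\BL A$ is Boolean, and therefore $\M A$ is Boolean.

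I do not anticipate a real obstacle here; the only thing to notice is that proHeytingness is available for free in both directions — from distributivity of $\M A$ in the forward direction and from subfitness of $A$ in the converse — which collapses $\M A$ onto $\BL A$ and lets \cref{thm: BL boolean} finish the argument. An alternative, more self-contained route would be to invoke the order-dual of \cref{thm: Janowitz} to pull $\wedge$-subfitness of $A$ directly out of $\wedge$-subfitness of $\M A$ (using that $A$ is join- and meet-dense in $\M A$), but routing through $\BL A$ is shorter given the results already established.
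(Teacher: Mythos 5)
Your argument is essentially the paper's own: both directions collapse $\M A$ onto $\BL A$ via proHeytingness and then invoke \cref{thm: BL boolean}, exactly as the paper does. Two small repairs to your forward direction are needed. First, ``distributive, hence a frame'' is not a valid inference for a complete lattice --- finite distributivity does not imply that finite meets distribute over arbitrary joins; the correct justification (the one the paper uses) is that every \emph{complete Boolean algebra} is a frame. Second, your detour through ``$\BL A$ is $\wedge$-subfit'' misstates what \cref{thm: BL boolean} requires: that theorem converts \emph{Booleanness} of $\BL A$ into $\wedge$-subfitness of $A$, so you should apply it directly to the fact that $\BL A \cong \M A$ is Boolean (if you insist on the detour, \cref{cosubfit is Boolean 2} closes the gap, since for frames $\wedge$-subfitness and Booleanness coincide). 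Neither issue affects the soundness of the overall argument.
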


\begin{proof}
    First suppose $\M A$ is Boolean. Then $\M A$ is $\vee$-subfit, so $A$ is $\vee$-subfit by \cref{thm: MA subfit implies A subfit}. Moreover, $\M A$ is a frame (since every complete Boolean algebra is a frame; see, e.g., \cite[p.~53]{balbes_distributive_1974}), so $\M A \cong \BL A$ by \cref{thm: MA frame}. Therefore, $\BL A$ is Boolean, so $A$ is $\wedge$-subfit by \cref{thm: BL boolean}. 

    Conversely, suppose $A$ is $\vee$-subfit and $\wedge$-subfit. Since $A$ is $\vee$-subfit, $A$ is proHeyting by \cref{prop: subfit implies proH}, so $\M A \cong \BL A$ by \cref{thm: MA frame}. Because $A$ is $\wedge$-subfit, $\BL A$ is Boolean by \cref{thm: BL boolean}. Thus, $\M A$ is Boolean.
\end{proof}

We conclude this section by characterizing when the ideal and canonical completions are Boolean. As we will see, the condition for the ideal completion to be Boolean is more restrictive.

\begin{proposition} 
    For $A\in{\DLat}$, $\I A$ is Boolean iff $A$ is finite and Boolean.
\end{proposition}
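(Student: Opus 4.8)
The plan is to dispose of the easy direction first and then route the hard direction through results already in hand. For the backward implication, if $A$ is finite then every ideal of $A$ is principal, so $a\mapsto{\downarrow}a$ is an isomorphism $A\cong\I A$; if moreover $A$ is Boolean, then $\I A$ is Boolean.

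For the forward implication, suppose $\I A$ is Boolean. First I would observe that every Boolean lattice is regular (\cref{rem: pseudo and reg}), so $\I A$ is regular, and hence $A$ is Boolean by \cref{I-reg is Boolean}. Let $X$ be the Priestley space of $A$. Since $A$ is Boolean, the order on $X$ is trivial, so $X$ is simply a Stone space and, by \cref{lem: char of M and Dinfty-1}, $\I A\cong{\sf OpUp}(X)=\O(X)$, the frame of open subsets of $X$.

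It then remains to show that $X$ is finite. Since $\O(X)$ is Boolean, every open $U\subseteq X$ has a complement $V$ in $\O(X)$; from $U\cap V=\varnothing$ and $U\cup V=X$ one reads off $V=X\setminus U$, so $X\setminus U$ is open, i.e.\ $U$ is clopen. Thus every open subset of $X$ is clopen. As $X$ is Hausdorff, each singleton is closed, so each $X\setminus\{x\}$ is open, hence clopen, hence each $\{x\}$ is open; therefore $X$ is discrete. Being also compact, $X$ is finite, and so $A\cong{\sf L}(X)=\wp(X)$ is finite (and Boolean), as desired.

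The main obstacle — which is genuinely mild — is ensuring the two ``Booleanness descends'' steps are airtight: that $\I A$ Boolean really forces $A$ Boolean (handled by passing through regularity and the already-established \cref{I-reg is Boolean}), and that Booleanness of the frame $\O(X)$ forces every open set to be clopen (handled by the elementary identification of complements in $\O(X)$). A self-contained alternative avoiding Priestley duality would argue purely lattice-theoretically: $\I A$ is a complete lattice in which $1$ is compact, and a complete Boolean algebra with compact top must be finite — if it were infinite it would contain an infinite antichain $\{b_n\}_{n\in\N}$, and applying compactness of $1$ to $1=\neg\!\left(\bigvee_n b_n\right)\vee\bigvee_n b_n$ forces all but finitely many $b_n$ to vanish, a contradiction; since $K(\I A)\cong A$, finiteness of $\I A$ yields finiteness (and Booleanness) of $A$.
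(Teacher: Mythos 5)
Your proof is correct, but it follows a genuinely different route from the paper's. The paper argues directly inside $\I A$: if $I$ has complement $J$, then $1=a\vee b$ for some $a\in I$, $b\in J$, and distributivity together with $I\cap J=\{0\}$ forces $I={\downarrow}a$; thus every ideal is principal and $\I A\cong A$, so $A$ is Boolean, and finiteness follows because an infinite Boolean algebra contains an infinite pairwise disjoint family (citing \cite[Prop.~3.4]{koppelberg_s_handbook_1989}), whose generated ideal would be nonprincipal. You instead obtain Booleanness of $A$ by passing through regularity (\cref{rem: pseudo and reg} plus \cref{I-reg is Boolean}) and finiteness by Priestley duality: a Boolean frame of opens forces every open to be clopen, hence a compact Hausdorff $X$ to be discrete and finite. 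Both halves of your main argument are airtight (the identification of frame-complements in $\O(X)$ with set-complements is exactly right), but your route leans on heavier machinery --- Priestley duality and hence the Prime Ideal Theorem, plus the coherent-frame characterization behind \cref{I-reg is Boolean} --- where the paper's is elementary and essentially choice-free apart from the antichain fact. Your ``self-contained alternative'' at the end (compactness of the top element of $\I A$ applied to $1=\neg\bigl(\bigvee_n b_n\bigr)\vee\bigvee_n b_n$ for an infinite antichain) is in fact very close to the paper's own argument, just phrased via compactness in $\I A$ rather than via nonprincipality of the generated ideal; either version is a clean way to finish.
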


\begin{proof}
If $A$ is finite, then $\I A \cong A$, so $A$ Boolean implies that $\I A$ is Boolean. Conversely, suppose $\I A$ is Boolean. Then every ideal, being complemented, is principal, so again $\I A \cong A$. Since every infinite boolean algebra has an infinite pairwise disjoint set \cite[Prop.~3.4]{koppelberg_s_handbook_1989}, it must also have a nonprincipal ideal. Thus, $A$ is finite. 
\end{proof}

\begin{proposition} \label{prop: Asigma boolean}
    For $A\in{\DLat}$, $A\!^\sigma$ is Boolean iff $A$ is Boolean. 
\end{proposition}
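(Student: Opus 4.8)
The plan is to read off the result from the Priestley-space description of the canonical completion together with the subfitness result already established. By \cref{lem: char of M and Dinfty-0} we have $A\!^\sigma \cong {\sf Up}(X)$, where $X$ is the Priestley dual of $A$, and by \cref{prop: canonical subfit iff boolean} we know that $A\!^\sigma$ is subfit iff $A$ is Boolean. These two facts will carry essentially the whole argument.

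For the forward implication I would first note that every Boolean lattice is subfit: this is immediate from \cref{rem: pseudo and reg} (Boolean lattices are regular, and regular distributive lattices are subfit), or one can check it directly by taking $c=\lnot a$ to witness $\vee$-subfitness when $a\not\le b$. Hence if $A\!^\sigma$ is Boolean it is in particular subfit, and \cref{prop: canonical subfit iff boolean} then yields that $A$ is Boolean. This implication requires no new computation.

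For the converse, suppose $A$ is Boolean. Then the order on its Priestley space $X$ is trivial (the specialization of Priestley duality to Stone duality for Boolean algebras, the same fact invoked in the proof of \cref{prop: canonical subfit iff boolean}), so every subset of $X$ is an upset; that is, ${\sf Up}(X)=\mathcal P(X)$, which is a complete Boolean algebra. By \cref{lem: char of M and Dinfty-0}, $A\!^\sigma$ is therefore Boolean.

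The only delicate point — and it is not really an obstacle — is the passage ``$A$ Boolean $\iff$ the order on $X$ is trivial.'' If one wishes to avoid citing it as folklore, one can argue intrinsically that in ${\sf Up}(X)$ the bounds are $\varnothing$ and $X$ and meets and joins are $\cap$ and $\cup$, so a complement of an upset $U$ is forced to be the set-theoretic complement $X\setminus U$; hence ${\sf Up}(X)$ is Boolean iff the complement of every upset is again an upset, iff $X$ carries the trivial order, and then this is dualized back to $A$ via Priestley duality. I expect no genuine difficulty: the proposition is in effect a repackaging of facts proved earlier in the paper.
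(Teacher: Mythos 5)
Your proof is correct, and there is no circularity: \cref{prop: canonical subfit iff boolean} is established earlier (in the subfitness section) independently of the present proposition, so deriving the forward implication from ``$A\!^\sigma$ Boolean $\Rightarrow$ $A\!^\sigma$ subfit $\Rightarrow$ $A$ Boolean'' is legitimate. The paper instead proves the statement as a single chain of equivalences: $A$ is Boolean iff the order on $X$ is discrete iff ${\sf Up}(X)=\mathcal P(X)$ iff $A\!^\sigma$ is Boolean; so your converse direction coincides with the paper's, while your forward direction reroutes through the subfitness result (which ultimately rests on \cite[Cor.~4.9]{erne_complete_2007}) rather than arguing directly that Booleanness of ${\sf Up}(X)$ forces the order to be trivial. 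What your route buys is economy -- it reuses a proposition already in hand; what the paper's route buys is self-containment and symmetry of the two directions. Notably, your final paragraph (complements in ${\sf Up}(X)$ are forced to be set-theoretic complements, so ${\sf Up}(X)$ is Boolean iff every complement of an upset is an upset iff the order is trivial) is precisely the justification the paper leaves implicit in its last ``which is equivalent to,'' so in effect you have supplied both arguments.
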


\begin{proof}
Let $X$ be the Priestley space of $A$. Then $A$ is Boolean iff the order on $X$ is discrete (see, e.g., \cite[p.~119]{gratzer_lattice_2011}), which happens iff ${\sf Up}(X)=\mathcal P(X)$, which is equivalent to  $A\!^\sigma$ being Boolean. 
\end{proof}

The situation summarizes as follows: 

\begin{summary}
    Let $A\in\DLat$.
    \begin{enumerate}
    
     \item $\BL A$ (resp.~$\pH A$) is Boolean iff $A$ is $\wedge$-subfit iff $\max X$ is dense in $X$. 
     \item  $\M A$ is Boolean  iff $A$ is both $\vee$- and $\wedge$-subfit iff both $\min X$ and $\max X$ are dense in $X$.
     \item $A\!^\sigma$ is Boolean iff $A$ is Boolean iff $\max X$ is $X$.
     \item $\I A$ is Boolean iff $A$ is finite and Boolean iff $\max X$ is $X$ and $X$ is finite.
        \end{enumerate}
\end{summary}

\section{Conclusion with summary} 

 We conclude by summarizing our findings in \cref{tab:subfit,tab:regular,tab:Boolean}. 
Each table considers one of the three separation properties. The right column describes the property of the completion of a distributive lattice, the left column the corresponding property satisfied by the lattice, and the center column the dual condition on the Priestley space. Within each table, the conditions are arranged from the weakest to the strongest. 

For $A \in \DLat$, recall the following notions from the end of \cref{Sec: subfit} relating to subfitness:

\begin{samepage}
\begin{itemize}

    \item A is \emph{$\BL$-subfit} if $\BL A$ is subfit (equivalently, $\pH A$ is subfit);
    \item A is \emph{$\I$-subfit}\footnote{This should not be confused with the notion of ideally subfit introduced in \cite[Def.~2.13]{delzell_conjunctive_2021} (which is of novel interest only when $A$ is unbounded).} if $\I A$ is subfit;
  
\end{itemize}
\end{samepage}

\begin{table}[ht]
    \centering
    \begin{tabular}{|c|c|c|}
    \hline
    \bf{Lattice}& \bf{Dual space}&\bf{Completion} \\
    \hline\hline
     $\BL$-subfit  & $\min pX$ is dense in $pX$ & $\BL A$ is subfit  \\
        \hline
      subfit  & $\min X$ is dense in $X$ & $\M A$  is subfit \\
      \hline
        $\I$-subfit  & $\min X$ is dense & $\I A$ is subfit \\
        &in the Skula topology & \\
        \hline
         Boolean  & $\min X$ is X & $A\!^\sigma$ is subfit \\
         \hline
         
    \end{tabular}
    \caption{Summary for subfitness}
    \label{tab:subfit}
\end{table}

For $A \in \DLat$, consider the following definitions relating to regularity:
\begin{itemize}
    \item A is \emph{$\M$-regular} if $\M A$ is regular;
    \item A is \emph{$\BL$-regular} if $\BL A$ is regular (equivalently, $\pH A$ is regular);
   
\end{itemize}
For Dedekind-MacNeille completions that are distributive, the notion of $\M$-regular is simply $\BL$-regular plus subfit (\cref{prop: reg MA }). Also, the notions of $\I$- and $(\cdot)^\sigma$-regular coincide with being Boolean (\cref{I-reg is Boolean,can-reg is Boolean}).

\begin{table}[ht]
    \centering
    \begin{tabular}{|c|c|c|}
    \hline
     \bf{Lattice}& \bf{Dual space}&\bf{Completion}\\
    \hline\hline
    
     $\BL$-regular  & For each $U \in {\sf L}(X)$:& $\BL A$ is regular \\
   
    &$R_{\sf BL}(U)$ is dense in $U$ &\\
        \hline
         $\M$-regular  &  $\min X$ is dense in $X$ & $\M A$ is regular \\
    &and for each $U \in {\sf L}(X)$: &\\
    &$R_{\sf BL}(U)$ is dense in $U$ &\\
        \hline
      regular  & for each $U \in {\sf L}(X)$: & $\BL A$  is $A$-regular\\
 &$R(U)$ is dense in $U$  &\\
      \hline
       
         Boolean  & For each $U \in {\sf L}(X)$: & $\I A$ is regular\\
   &$R(U)=U$  &(equiv., $A^\sigma$ is regular)\\
   
        \hline
  
    \end{tabular}
    \caption{Summary for regularity}
    \label{tab:regular}
\end{table}

For $A \in \DLat$, consider the following definitions relating to Booleanness:
\begin{itemize}
    \item A is \emph{$\M$-Boolean} if $\M A$ is Boolean;
    \item A is \emph{$\BL$-Boolean} if $\BL A$ is Boolean (equivalently, $\pH A$ is Boolean);
    \item A is \emph{$\I$-Boolean} if $\I A$ is Boolean;
   
\end{itemize}
The notion of $(\cdot)^\sigma$-Boolean is simply Boolean (\cref{prop: Asigma boolean}).

\begin{table}[ht]
    \centering
    \begin{tabular}{|c|c|c|}
    \hline
    \bf{Lattice}& \bf{Dual space}&\bf{Completion}\\
    \hline\hline
     $\BL$-Boolean  & $\max X$ is dense in $X$ & $\BL A$ is Boolean \\
     
        \hline
      $\M$-Boolean  & $\max X$ and $\min X$ are dense in $X$   & $\M A$  is Boolean\\
       \hline
        Boolean  & $\max X$ is $X$ &$A\!^\sigma$ is Boolean\\
       
        \hline
         $\I$-Boolean  & $\max X$ is $X$ and $X$ is finite  & $\I A$ is Boolean\\

        \hline
       \end{tabular}
    \caption{Summary for Booleanness}
    \label{tab:Boolean}
\end{table}

\noindent {\bf Acknowledgment}: We are thankful to the referee for careful reading and useful comments which have improved the presentation.

\newcommand{\etalchar}[1]{$^{#1}$}

\end{document}